\numberwithin{equation}{section}
\theoremstyle{plain}
\newtheorem{theorem}{Theorem}[section]
\newtheorem{lemma}[theorem]{Lemma}
\newtheorem{proposition}[theorem]{Proposition}
\newtheorem{corollary}[theorem]{Corollary}
\newtheorem{conjecture}[theorem]{Conjecture}
\theoremstyle{definition}
\newtheorem{definition}[theorem]{Definition}
\newtheorem{example}[theorem]{Example}
\newtheorem{remark}[theorem]{Remark}
\newtheorem{question}[theorem]{Question}
\newcommand{\mf}{\mathfrak}
\newcommand{\mc}{\mathcal}
\newcommand{\id}{\text{id}}
\newcommand{\kk}{{\Bbbk}}
\let\c@equation\c@theorem  % incorporate equation numbering
\begin{document}

\title[Pointed Hopf actions on fields, I]
{Pointed Hopf actions on fields, I}

\author{Pavel Etingof and Chelsea Walton}

\address{Etingof: Department of Mathematics, Massachusetts Institute of Technology, Cambridge, Massachusetts 02139,
USA}

\email{etingof@math.mit.edu}

\address{Walton: Department of Mathematics, Massachusetts Institute of Technology, Cambridge, Massachusetts 02139,
USA}

\email{notlaw@math.mit.edu}

\bibliographystyle{abbrv}       % Set the bibliography style to AMS

\begin{abstract}
Actions of semisimple Hopf algebras $H$ over an algebraically
closed field of characteristic zero on commutative
domains were classified recently by the authors in
\cite{EtingofWalton:ssHopf}. The answer turns out to be very
simple --  if the action is inner faithful, then $H$ has to be a
group algebra. The present article contributes to the
non-semisimple case, which is much more complicated. 
Namely, we study actions of finite dimensional (not necessarily semisimple) 
Hopf algebras on commutative domains, particularly when $H$ is
pointed of finite Cartan type. 

The work begins by reducing to the case where $H$ acts inner 
faithfully on a field; such a Hopf algebra is referred to as {\it Galois-theoretical}. 
We present examples of such Hopf algebras, which include the Taft
algebras, $u_q(\mf{sl}_2)$, 
and some Drinfeld twists of other small quantum groups. 
We also give many examples of finite dimensional Hopf algebras 
which are not Galois-theoretical. 
Classification results on  finite dimensional pointed 
Galois-theoretical Hopf algebras of finite Cartan type will be provided in the sequel, Part II, of this study. 
\end{abstract}

\subjclass[2010]{13B05, 16T05, 81R50}
\keywords{commutative domain, field, finite Cartan type, Galois-theoretical, Hopf algebra action, pointed}

\maketitle

%\tableofcontents
%\setcounter{section}{-1}

%%%%%%%%%%%%%%%%%%%%%%%%%%%%%%%%%%%%%%%
%%%%%%%%%%%%%%%%%%%%%%%%%%%%%%%%%%%%%%%
%%%%%%%%%%%%%%%%%%%%%%%%%%%%%%%%%%%%%%%

\section{Introduction} \label{sec:intro}

Let $\kk$ be an algebraically closed field of characteristic zero, and let an unadorned $\otimes$ denote $\otimes_{\kk}$.
This work contributes to the field of noncommutative invariant theory in the sense of studying quantum analogues of group actions on commutative $\kk$-algebras.  Here, we restrict our attention to the actions of  finite quantum groups, i.e. finite dimensional Hopf algebras, as these objects  and their actions on (quantum) $\kk$-algebras have been the subject of recent research in noncommutative invariant theory, including \cite{Artamonov2003}, \cite{CWWZ:filtered}, \cite{Etingof:PIintegrality}, \cite{EtingofWalton:ssHopf}, \cite{KKZ:Gorenstein}, \cite{LWZ}, \cite{MontgomerySchneider}, \cite{MontgomerySmith}, \cite{Skryabin:invariants}. 
The two important classes of finite dimensional Hopf algebras $H$ are those that are {\it semisimple} (as a $\Bbbk$-algebra) and those that are {\it pointed} (namely, all simple $H$-comodules are 1-dimensional).
Moreover, we have many choices of what one could consider to be a quantum $\kk$-algebra, but from the viewpoint of classical invariant theory and algebraic geometry,  the examination of Hopf actions on commutative domains over $\kk$ is of interest. Since the classification of semisimple Hopf actions on commutative domains over $\kk$ is  understood by work of the authors \cite{EtingofWalton:ssHopf}, the focus of this article is to classify finite dimensional non-semisimple Hopf ($H$-) actions on commutative domains over $\kk$, particularly when $H$ is pointed.

It was announced in the latest survey article of Nicol\'{a}s Andruskiewitsch \cite{And:ICM} that the classification of finite dimensional pointed Hopf algebras
$H$, in the case when $H$ has an abelian group $G$ of
grouplike elements, is expected to be completed soon. 
In this
case, $H$ is a lifting of the bosonization of a Nichols algebra
$\mf{B}(V)$ of diagonal type by the group algebra $\Bbbk G$, that
is to say, $gr(H) \cong \mf{B}(V) \# \Bbbk G$. 
The most
extensively studied class of Nichols algebras are those of {\it finite Cartan type};
their bosonizations
are variations of Lusztig's small quantum groups. 
Prompted by the main classification result of finite dimensional pointed Hopf algebras of finite
Cartan type, provided by Andruschiewitsch-Schneider in \cite{AndSch:pointed}, we restrict our attention to the actions of such Hopf algebras on commutative domains.

All Hopf algebra actions in this work, unless otherwise specified, are assumed {\it inner faithful} in the sense that the action does not factor through a `smaller' Hopf algebra [Definition~\ref{def:infaith}].

We begin our study of Hopf actions on commutative domains by reducing to the case where Hopf algebras act inner faithfully on fields [Lemma~\ref{lem:Skryabin}, Remark~\ref{rem:liftcomdomain}]; such Hopf algebras are referred as {\it Galois-theoretical} [Definition~\ref{def:GT}]. A general result on Galois-theoretical Hopf algebras is as follows.

\begin{proposition} [Propositions~\ref{prop:GTprelim} and~\ref{prop:GTtwist}] The Galois-theoretical property is preserved under taking a Hopf subalgebra, and preserved under tensor product, but is not preserved under Hopf dual, 2-cocycle deformation (that alters multiplication), nor Drinfeld twist (that alters comultiplication).
\end{proposition}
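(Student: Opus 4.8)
The plan is to deduce the two affirmative statements from a dual description of inner faithfulness, and to obtain the three negative ones from explicit counterexamples. Recall that for a module $M$ over a finite dimensional Hopf algebra $H$, the subcoalgebra $C_H(M)\subseteq H^*$ spanned by the matrix coefficients $h\mapsto\phi(h\cdot v)$ ($v\in M$, $\phi\in M^*$) satisfies $\mathrm{Ann}_H(M)^{\perp}=C_H(M)$; via the correspondence between Hopf ideals of $H$ and Hopf subalgebras of $H^*$, the action of $H$ on $M$ is therefore inner faithful if and only if $C_H(M)$ generates $H^*$ as a Hopf algebra. I will use this freely.

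\emph{Hopf subalgebras.} Let $H$ act inner faithfully on a field $K$, let $H'\subseteq H$ be a Hopf subalgebra, and restrict the action to $H'$. If $I\subseteq H'$ is the largest Hopf ideal contained in $\mathrm{Ann}_{H'}(K)=\mathrm{Ann}_H(K)\cap H'$, then $HIH$ is a two-sided ideal of $H$ contained in $\mathrm{Ann}_H(K)$, and it is in fact a Hopf ideal: it is a coideal since $\Delta(I)\subseteq I\otimes H'+H'\otimes I$, and it is $S$-stable since $S(I)\subseteq I$. Inner faithfulness of $H$ gives $HIH=0$, whence $I=0$, so $H'$ acts inner faithfully on $K$ and is Galois-theoretical. (Dually: the restriction map $H^*\twoheadrightarrow(H')^*$ is a Hopf surjection taking the generating set $C_H(K)$ onto $C_{H'}(K)$.)

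\emph{Tensor products.} Let $H_1,H_2$ act inner faithfully on fields $K_1,K_2$. As $\kk$ is algebraically closed, $K_1\otimes_{\kk}K_2$ is a domain; let $K$ be its fraction field. The exterior tensor product makes $K_1\otimes_{\kk}K_2$, and hence $K$, an $(H_1\otimes H_2)$-module algebra, and passing to the fraction field does not change the largest Hopf ideal inside the annihilator (cf.\ Remark~\ref{rem:liftcomdomain}); so it suffices to show the action on $K_1\otimes_{\kk}K_2$ is inner faithful. In $H^*=H_1^*\otimes H_2^*$ one has $C_{H_1\otimes H_2}(K_1\otimes_{\kk}K_2)\supseteq C_{H_1}(K_1)\otimes C_{H_2}(K_2)$, since $c_{v_1,\phi_1}\otimes c_{v_2,\phi_2}$ is the matrix coefficient of $v_1\otimes v_2$ against $\phi_1\otimes\phi_2$; taking $\phi_2(1_{K_2})=1$ yields $c_{1_{K_2},\phi_2}=\varepsilon_{H_2}=1_{H_2^*}$, so this subcoalgebra contains the image of $C_{H_1}(K_1)$ under the Hopf embedding $H_1^*\hookrightarrow H^*$, and symmetrically for $C_{H_2}(K_2)$. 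By inner faithfulness of the two factors, the Hopf subalgebra of $H^*$ generated by $C_{H_1\otimes H_2}(K_1\otimes_{\kk}K_2)$ thus contains $H_1^*\otimes 1$ and $1\otimes H_2^*$, hence equals $H^*$; so $H_1\otimes H_2$ is Galois-theoretical.

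\emph{Failure under Hopf dual, $2$-cocycle deformation, and Drinfeld twist.} For the Hopf dual, take $H=\kk G$ for a finite non-abelian group $G$, e.g.\ $G=S_3$: then $\kk G$ is Galois-theoretical, as $G$ acts faithfully (hence inner faithfully) on a rational function field by permuting a basis, whereas $H^*=\kk^G$ is semisimple but not a group algebra, having only $|\widehat G|<|G|$ grouplikes; by the classification of semisimple Hopf actions on commutative domains in \cite{EtingofWalton:ssHopf}, $\kk^G$ admits no inner faithful action on a field, so it is not Galois-theoretical. For the other two operations one must leave the semisimple world — a $2$-cocycle deformation of a group algebra is cocommutative, hence again a group algebra, and dually a Drinfeld twist of $\kk^G$ is commutative — and the required counterexamples are instead among the non-semisimple (small quantum group type) Hopf algebras constructed later: a Hopf algebra and a Drinfeld twist of it with differing Galois-theoretical status, together with the companion statement for a $2$-cocycle deformation (a $2$-cocycle deformation of a Hopf algebra being dual to a Drinfeld twist of its dual); see \ref{prop:GTtwist} and the accompanying examples. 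The real obstacle is this last point: showing that a given Hopf algebra is \emph{not} Galois-theoretical rules out inner faithful actions on \emph{all} fields simultaneously, and depends on the structural obstructions developed in the later sections rather than on any formal manipulation.
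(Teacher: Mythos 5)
Your two affirmative parts are sound. The Hopf-subalgebra argument is the paper's own ($I\mapsto HIH$). For the tensor product the paper just asserts in one line that $H\otimes H'$ acts inner faithfully on the quotient field of $L\otimes L'$; your dual reformulation via matrix-coefficient coalgebras ($C_{H_1\otimes H_2}\supseteq C_{H_1}\otimes C_{H_2}$, with $\varepsilon$ recovered from $1_{K_2}$) is a legitimate and more explicit justification of that assertion. Your Hopf-dual counterexample ($\kk G$ versus $\kk^G$ for $G$ non-abelian, using the semisimple classification) is exactly the paper's ``more simply'' alternative. One small slip: extending the inner faithful action from the domain $K_1\otimes_{\kk}K_2$ to its fraction field is Lemma~\ref{lem:Skryabin}, not Remark~\ref{rem:liftcomdomain}.

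The genuine gap is in the two remaining negative statements. For the Drinfeld twist you assert that ``one must leave the semisimple world,'' and this is false: the paper's counterexample is semisimple. Take $\kk G$ for a finite group $G$ admitting a twist $J$ (supported on a non-central abelian subgroup carrying a nondegenerate alternating bicharacter) such that $(\kk G)^J$ is noncocommutative. Then $(\kk G)^J$ is semisimple but not a group algebra, so by Proposition~\ref{prop:GTprelim}(b) it is not Galois-theoretical, while $\kk G$ is. Your reasoning went astray by twisting $\kk^G$ (where the twist indeed does nothing to the algebra) rather than $\kk G$ (where it genuinely changes the coproduct when $G$ is non-abelian); the same semisimple classification you already invoked for the Hopf dual closes this case. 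For the $2$-cocycle deformation you correctly observe that group algebras cannot furnish a counterexample (a cocycle deformation of a cocommutative Hopf algebra is cocommutative, hence again a group algebra), but you then defer to ``examples constructed later'' without exhibiting one; as written this is circular, since the statement under proof is precisely that such an example exists. The paper's witness is $u_q(\mf{sl}_2)$ versus $\mathrm{gr}(u_q(\mf{sl}_2))$: the former is Galois-theoretical and the latter is not (Proposition~\ref{prop:uqsl2}, whose key computation is that the commutator relation forces $\lambda\neq 0$ on any module field), and they are $2$-cocycle deformations of one another by Masuoka's theorem. Without producing such a pair, or at least the obstruction argument for $\mathrm{gr}(u_q(\mf{sl}_2))$, this part of the claim remains unproved.
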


Examples of Galois-theoretical Hopf algebras include all finite
group algebras, and moreover, any semisimple Galois-theoretical
Hopf algebra is a group algebra
[Proposition~\ref{prop:GTprelim}(b)]. In contrast to this, we
will see below that there are many examples
of non-semisimple finite dimensional pointed Galois-theoretical Hopf algebras, particularly of finite Cartan type. 

\begin{theorem} \label{thm:main}
Let $q\neq \pm 1$ be a root of unity unless stated otherwise, and
let $\mf{g}$ be a finite dimensional simple Lie algebra. 

(1) The following are \underline{examples} of Galois-theoretical
finite dimensional pointed Hopf algebras of finite Cartan type.

\medskip

%\hspace{-.22in}
\begin{tabular}{|llclr|}
\hline
\underline{Galois-theoretical Hopf algebra} && \underline{finite Cartan type} && \underline{Reference}\\
Taft algebras $T(n)$ &&  A$_1$ && Prop.~\ref{prop:T(n)}\\
Nichols Hopf algebras $E(n)$ &&  A$_1^{\times n}$ &&Prop.~\ref{prop:E(n)}\\
the book algebra $\mathbf{h}(\zeta, 1)$&& A$_1 \times$A$_1$ && Prop.~\ref{prop:book}\\
the Hopf algebra $H_{81}$ of dimension 81 && A$_2$ && Prop.~\ref{prop:H81}\\
 $u_q(\mf{sl}_2)$ && A$_1 \times$A$_1$ && Prop.~\ref{prop:uqsl2}\\
 $u_q(\mf{gl}_2)$ && A$_1 \times$A$_1$ && Prop.~\ref{prop:uq(gl2)}\\
Twists $u_q(\mf{gl}_n)^{J^+}$, $u_q(\mf{gl}_n)^{J^-}$ for $n \geq 2$&& A$_{n-1} \times$A$_{n-1}$ && Prop.~\ref{prop:uqglntwist}\\
Twists $u_q(\mf{sl}_n)^{J^+}$, $u_q(\mf{sl}_n)^{J^-}$ for $n \geq 2$&& A$_{n-1} \times$A$_{n-1}$ && Cor.~\ref{cor:uqslntwist}\\
Twists $u_q^{\geq 0}(\mf{g})^J$ for $2^{\text{rank}(\mf{g})-1}$ of such $J$ &&  same type as $\mf{g}$ && Prop.~\ref{prop:uqgpostwist}\\
\hline
\end{tabular}
\smallskip

\noindent For the last three cases, $q$ is a root of unity of odd order $m \geq 3$, with $m>3$ for type G$_2$. Further, $n$ is relatively prime to $m$ for the result on $u_q(\mf{sl}_n)^{J^+}$ and $u_q(\mf{sl}_n)^{J^-}$.

\medskip

(2) The following are \underline{non-examples} of Galois-theoretical finite dimensional pointed Hopf algebras of finite Cartan type.
\bigskip

\hspace{-.22in}
\begin{tabular}{|lcll|}
\hline
\underline{Non-Galois-theoretical Hopf algebra} & \underline{finite Cartan type} && \underline{Reference}\\
generalized Taft algs which are not Taft algs &  A$_1$ && Prop.~\ref{prop:genTaft}\\
the book algebra $\mathbf{h}(\zeta, p)$ for $p \neq 1$ & A$_1 \times$A$_1$ && Prop.~\ref{prop:book}\\
 gr($u_q(\mf{sl}_2)$) & A$_1 \times$A$_1$ && Prop.~\ref{prop:uqsl2}\\
\hline
\end{tabular}

\bigskip
\end{theorem}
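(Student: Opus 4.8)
The statement collects Propositions~\ref{prop:T(n)}, \ref{prop:E(n)}, \ref{prop:book}, \ref{prop:H81}, \ref{prop:uqsl2}, \ref{prop:uq(gl2)}, \ref{prop:uqglntwist}, \ref{cor:uqslntwist} and~\ref{prop:uqgpostwist} (the examples) together with Propositions~\ref{prop:genTaft}, \ref{prop:book} and~\ref{prop:uqsl2} (the non-examples), so the plan is to indicate how each of these is proved. In every case I would first invoke the reduction of Lemma~\ref{lem:Skryabin} and Remark~\ref{rem:liftcomdomain}: a finite dimensional Hopf algebra $H$ is Galois-theoretical if and only if it acts inner faithfully on \emph{some} commutative domain $A$; indeed, given such an $A$, the localization $A\otimes_{A^H}\operatorname{Frac}(A^H)$ is module-finite over the field $\operatorname{Frac}(A^H)$ by integrality of $A$ over $A^H$, hence is itself a field, and $A$ embeds in it $H$-equivariantly, so inner faithfulness is preserved. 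This allows me to build actions on polynomial or Laurent polynomial rings instead of directly on fields, and to test inner faithfulness by the criterion that the kernel of $H\to\operatorname{End}_\kk(A)$ contains no nonzero Hopf ideal --- in the positive cases I would in fact check the stronger statement that this map is injective, exhibiting a $\kk$-basis of $H$ whose images act $\kk$-linearly independently on $A$. By the same reduction the negative cases amount to showing that every $H$-action on a field factors through a proper Hopf quotient.

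For the examples I would use one uniform device. Let the group $G$ of grouplikes act diagonally on a (Laurent) polynomial ring $\kk[t_1,\dots,t_r]$ with eigenvalues the roots of unity dictated by the braiding, and let each $(1,g_i)$-skew primitive $x_i$ act as a Jackson $q_i$-difference operator in one variable, possibly premultiplied by a monomial. The relation $x_i^{N_i}=0$ then holds on the whole ring --- and hence on its fraction field --- because, once $q_i$ is a primitive $N_i$-th root of unity, all the intermediate Gaussian binomials $\binom{N_i}{k}_{q_i}$ ($0<k<N_i$) vanish, so the $N_i$-th iterate of the Jackson operator acts, up to a unit, as $f(t)\mapsto f(t)-f(q_i^{N_i}t)=0$. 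The remaining diagonal relations and the quantum Serre relations of the prescribed finite Cartan type are then checked on the generators $t_j$. This treats $T(n)$, $E(n)$, $\mathbf{h}(\zeta,1)$ and $H_{81}$ uniformly; for $u_q(\mf{sl}_2)$ (and, with an extra variable, $u_q(\mf{gl}_2)$) one works with one variable $t$, lets $E$ act by a $q$-difference operator and $F$ by a suitable scalar multiple of $t^{2}K^{-1}E$, so that $[E,F]$ acts as the diagonal operator $\tfrac{K-K^{-1}}{q-q^{-1}}$, and reads off inner faithfulness from the PBW basis. For the Drinfeld-twisted quantum groups $u_q(\mf{gl}_n)^{J^{\pm}}$, $u_q(\mf{sl}_n)^{J^{\pm}}$ and $u_q^{\geq 0}(\mf{g})^J$, I would use that a twist leaves the algebra unchanged and transport the standard action of the untwisted quantum group on quantum affine space (resp.\ on the appropriate quantum space) across the twist; the twist $J$ is chosen precisely so that the resulting module algebra becomes an \emph{ordinary}, commutative polynomial ring, whereupon inner faithfulness descends from that of the quantum-plane action. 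The count $2^{\operatorname{rank}(\mf{g})-1}$ reflects exactly which torus twists achieve this, which is consistent with Proposition~\ref{prop:GTtwist}, stating that Drinfeld twisting does not preserve the Galois-theoretical property in general.

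For the non-examples the driving input is a rigidity phenomenon for skew derivations that holds over fields but fails over general commutative domains: if $K$ is a field of characteristic zero, $\sigma\in\operatorname{Aut}_\kk(K)$ has finite order, and $D$ is a nonzero $\sigma$-skew derivation with $D^{N}=0$ on all of $K$, then the order of $\sigma$ must equal the order of the scalar $q$ by which $\sigma$ conjugates $D$ --- the extra room available over a polynomial ring disappears upon inverting elements, where $D^{N}$ no longer annihilates $1/f$. For a generalized Taft algebra that is not a Taft algebra the grouplike has order strictly larger than $\operatorname{ord}(q)$, so by this rigidity the grouplike either acts non-faithfully on $K$ or acts while the attached skew primitive is zero; either way the action factors through a proper Hopf quotient. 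The book algebra $\mathbf{h}(\zeta,p)$ with $p\neq1$ is dealt with by a variant applied to its pair of skew primitives. For $\operatorname{gr}(u_q(\mf{sl}_2))$ the commutator relation of $u_q(\mf{sl}_2)$ degenerates to $EF=FE$, so $E$ and $F$ must act as \emph{commuting} nilpotent skew derivations of $K$; a structural analysis via $\ker E$, $\ker F$ and the rigidity above shows that on a field one of $E,F$ is forced to act as $0$, and the action factors through $\operatorname{gr}(u_q(\mf{b}_{\mp}))$.

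I expect the main obstacles to be, on the positive side, the twisted quantum groups --- pinning down exactly which Drinfeld twists send the quantum-affine-space module algebra to a commutative one, and verifying the twisted quantum Serre relations and inner faithfulness at that generality --- and, on the negative side, the rigidity input itself: proving that a nonzero nilpotent skew derivation of a field pins down the order of the associated automorphism, together with the companion statement excluding two nonzero commuting nilpotent skew derivations compatible with the $\operatorname{gr}(u_q(\mf{sl}_2))$-relations. These are nonexistence assertions that genuinely use that $K$ is a field, in contrast with the explicit constructions that make the positive cases routine.
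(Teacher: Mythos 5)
Your overall architecture coincides with the paper's: Theorem~\ref{thm:main} is proved by assembling the propositions of Section~\ref{sec:examples}, each positive case is handled by exhibiting an explicit inner faithful action on a (quantum) polynomial ring with the grouplikes acting diagonally and the skew primitives acting as skew derivations, the twisted quantum groups are handled by transporting the action on quantum affine space $A_q$ (resp.\ $A_{q,Q}$) across the twist so that $(A_q)_{J^{\pm}}$ becomes an honest polynomial ring (Theorem~\ref{thm:GKM}), and everything is passed to the fraction field via Lemma~\ref{lem:Skryabin}. The negative cases are likewise treated in the paper by putting a hypothetical module field in the normal form $L=F[u]/(u^n-v)$ of Lemma~\ref{lem:Zn} and deriving a contradiction from the defining relations. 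So the route is essentially the paper's.

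Two concrete points, however, do not survive scrutiny. First, your plan to certify inner faithfulness by ``in fact checking the stronger statement that $H\to\operatorname{End}_\kk(A)$ is injective'' fails for $E(n)$: the paper observes (after Proposition~\ref{prop:E(n)}) that $E(n)$ can never act \emph{faithfully} on a commutative domain, since the elements $gx_i-x_i$ necessarily act by zero. You do also state the correct criterion (no nonzero Hopf ideal in the kernel, implemented via \cite[Corollary~5.4.7]{Montgomery}: every nonzero Hopf ideal meets the span of the skew primitives), and that is what the paper actually uses; but the stronger check you propose is impossible in at least one of your positive cases, so it cannot be the uniform verification device.

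Second, the non-example half of your proposal rests on an unproved ``rigidity lemma'' for nilpotent skew derivations of fields, and even granting it, it only addresses one of the three non-examples. For the generalized Taft algebras the paper proves the needed instance by direct computation: with $x\cdot u=r_0u^{s+1}$ one gets $x^m\cdot u=[s+1][2s+1]\cdots[(m-1)s+1]\,r_0^m uv$ with all factors nonzero when $s=n/m>1$, forcing $v$ to be an $m$-th power in $F^{\times}$, a contradiction; this is the content your lemma would have to deliver, and it is not a formality. For $\mathbf{h}(\zeta,p)$ the obstruction is \emph{not} a nilpotency phenomenon at all --- both conjugation scalars $\zeta$ and $\zeta^{p}$ have full order $n$, so your lemma gives nothing --- and the paper instead uses the relation $x_1x_2=x_2x_1$ together with $x_2\cdot u=r_0u^{1-p}$ to force $1+\zeta^{-1}+\dots+\zeta^{-(n-p)}=0$, hence $p=1$. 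Similarly, for $\operatorname{gr}(u_q(\mf{sl}_2))$ the relation $ef=fe$ applied to $u$ directly forces $f\cdot u=0$ and hence $f=0$ on $L$; no auxiliary structure theory of commuting nilpotent skew derivations is needed. You correctly anticipate that these are the hard points, but as written the proposal asserts rather than supplies the arguments that carry the non-example half of the theorem.
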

This theorem will be used in Part II of this work on the classification of Galois-theoretical Hopf algebras of finite Cartan type. See Remarks~\ref{rem:fullII} and~\ref{rem:posII} for a preview of these results for $u_q(\mf{gl}_n)$, $u_q(\mf{sl}_n)$, $u_q^{\geq 0}(\mf{g})$, and their twists. 

For each of the Galois-theoretical Hopf algebras $H$ in the theorem above, the module fields $L$ are analyzed in terms of their invariant subfields $L^H$. For instance, we have the following result.

\begin{theorem} [Theorem \ref{inva}] Let $H$ be a
  finite dimensional pointed 
Galois-theoretical Hopf algebra with
  $H$-module field $L$. 
Then, for the group  $G=G(H)$ of grouplike elements of $H$, we have the following statements:
\begin{enumerate}
\item $L^H = L^G$; and
\item the extension $L^H \subset L$ is Galois with Galois group $G$.
\end{enumerate}
\end{theorem}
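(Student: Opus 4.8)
The plan is to deduce (2) from (1) together with the faithfulness of the $G$-action on $L$, and to prove each of these in turn. To see that $G=G(H)$ acts faithfully on $L$, let $N\leq G$ be the subgroup of those $g$ acting as $\id_L$ and let $I$ be the two-sided ideal of $H$ generated by $\{g-1:g\in N\}$. Since every $g\in N$ is grouplike, $\epsilon(g-1)=0$, $S(g-1)=g^{-1}-1\in\kk N$, and $\Delta(g-1)=g\otimes(g-1)+(g-1)\otimes 1$, from which a routine check gives that $I$ is a Hopf ideal. As $(g-1)\cdot L=0$ for $g\in N$, we have $I\cdot L=0$; inner faithfulness of the action then forces $I=0$, hence $N=\{1\}$. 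So $G$ embeds into $\mathrm{Aut}_{\kk}(L)$, and Artin's theorem gives that $L^G\subseteq L$ is a finite Galois extension with Galois group $G$. Once (1) is known, (2) follows immediately.

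For (1) the inclusion $L^H\subseteq L^G$ is clear since $\kk G\subseteq H$, so everything comes down to the reverse inclusion. I would fix $a\in L^G$ and show $h\cdot a=\epsilon(h)a$ for every $h\in H$ by induction on the smallest $n$ with $h\in H_n$, where $H_0=\kk G\subseteq H_1\subseteq\cdots$ is the coradical filtration; the case $n=0$ is the hypothesis. In the inductive step one reduces, after multiplying by a suitable grouplike, to the case of a nontrivial $(1,g)$-skew-primitive $x$, $\Delta(x)=x\otimes 1+g\otimes x$. Conjugation by $g$ is a finite-order (hence semisimple) operator on the finite-dimensional space of $(1,g)$-skew-primitives, and its eigenvalue-$1$ subspace is spanned by $1-g$ --- for which $x\cdot a=a-g\cdot a=0$ since $g$ fixes $a$ --- so we may assume $gxg^{-1}=qx$ with $q$ a primitive $d$-th root of unity, $d\geq 2$. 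Since $\binom{d}{i}_q=0$ for $0<i<d$, the element $x^d$ is a $(1,g^d)$-skew-primitive commuting with $g$, so it lies in a commutative finite-dimensional Hopf subalgebra of $H$; as a commuting skew-primitive in such a Hopf algebra over $\kk$ is necessarily a scalar multiple of $1-c$ for a grouplike $c$ (by Cartier's theorem together with the vanishing of $H^1$ of a finite group in characteristic zero), we obtain $x^d=\lambda(1-g^d)$ for some $\lambda\in\kk$.

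The crux is now a statement about the operator $\partial:=x\cdot(-)$ on $L$: it is a $\sigma$-skew-derivation (for $\sigma:=g|_L$) with $\sigma\partial=q\partial\sigma$ and, by the previous step, $\partial^{d}=\lambda(\id-\sigma^d)$ on all of $L$; I claim $\partial(a)=0$. Since $\sigma$ fixes $a$ we get $\partial^{d}(a)=\lambda(a-\sigma^d(a))=0$, and on $\kk[a]$ (fixed pointwise by $\sigma$) the map $\partial$ is an ordinary derivation, so $\partial(a^k)=ka^{k-1}\partial(a)$. Evaluating $\partial^{d}(a^d)$ in two ways --- directly it is $\lambda(a^d-\sigma^d(a^d))=0$, while expanding by the $q$-Leibniz rule organises it in terms of $\partial(a),\partial^2(a),\dots$ --- and pushing the resulting identities through, in the spirit of the classical fact that a characteristic-zero field carries no nonzero nilpotent derivation, forces $\partial(a)=0$, i.e. $x\cdot a=0$. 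For instance, when $d=2$ one has $\partial^2(a)=x^2\cdot a=\lambda(a-\sigma^2(a))=0$, so $\partial^2(a^2)=2\,\partial(a)^2$ on one side and $0$ on the other, whence $\partial(a)^2=0$ and $\partial(a)=0$ at once; this completes the induction and hence the proof.

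The step I expect to be the main obstacle is precisely the last one. When $d\geq 3$ the intermediate $q$-binomial coefficients vanish, so the naive substitution $b=a^d$ carries no information about $\partial(a)$, and one must argue more carefully --- by an induction on $d$, or by additionally tracking the iterates $x^j\cdot a$ for $2\leq j\leq d-1$, each of which is a $\sigma$-eigenvector annihilated by $\partial^{d-j}$ --- in order to conclude $\partial(a)=0$. A lesser technical point is whether the coradical induction above should be run for a general $h\in H_n$ directly or whether one prefers first to reduce to skew-primitives via generation of $H$ by $H_1$ over its coradical; since the statement is made for an arbitrary finite-dimensional pointed $H$, the argument should not rely on such a generation result.
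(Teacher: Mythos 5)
Your derivation of (2) from (1) via the Hopf-ideal generated by $\{g-1 : g\in N\}$ is correct and matches the paper. Your proof of (1), however, has a genuine gap at exactly the step you flag as "the main obstacle": after reducing to a $(1,g)$-skew-primitive $x$ with $gxg^{-1}=qx$ and $x^d=\lambda(1-g^d)$, you only carry out the argument that $\partial(a)=0$ for $d=2$. For $d\ge 3$ the $q$-multinomial coefficients in $\partial^d(a^d)$ vanish (since $[d]_q=0$), so the identity $\partial^d(a^d)=0$ gives no information about $\partial(a)$ alone, and the "more careful" induction on the iterates $x^j\cdot a$ is asserted but not performed. A second gap, which you also note but do not resolve, is that the coradical induction cannot be reduced to skew-primitives for general pointed $H$: for $n\ge 2$ the Taft--Wilson theorem only gives $\Delta(x)=g\otimes x+x\otimes g'+w$ with $w\in H_{n-1}\otimes H_{n-1}$, and since the Andruskiewitsch--Schneider generation conjecture is not available here (Theorem~\ref{thm:ASconjAbel} requires $G$ abelian, which is not assumed in Theorem~\ref{inva}), your argument as written only treats $H_1$.

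The paper closes both gaps with a single global input that your proposal never invokes: Skryabin's integrality theorem. By \cite[Theorem~6.2(iii)]{Skryabin:invariants}, $L$ (more generally any commutative domain $A$ with an $H$-action, $H$ finite dimensional) is integral over $A^H$, hence so is $A^G$. The inductive step then only needs the \emph{qualitative} fact that $x$ acts as a derivation $A^G\to A$ (which follows from the Taft--Wilson form of $\Delta(x)$ and the inductive hypothesis on $w$, with no case analysis on eigenvalues of conjugation and no relation of the form $x^d=\lambda(1-g^d)$). This derivation vanishes on $A^H$ by the very definition of invariants, and Lemma~\ref{deri} --- differentiate the minimal monic polynomial of $c\in A^G$ over $A^H$ and use characteristic zero plus torsion-freeness --- forces it to vanish on all of $A^G$. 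This mechanism is uniform in $n$ and in the order of $q$, which is why the paper's proof needs none of the $q$-Leibniz combinatorics. If you want to salvage your computational route, you would need to complete the descending induction on the elements $b_j=\partial^j(a)$ for all $d$ (it does appear to close for $d=3$ with extra identities such as $\partial^d(a^2)=0$), and separately handle the higher coradical layers; but the integrality argument is both shorter and strictly more general, which is presumably why the paper uses it and why Theorem~\ref{inva2} extends beyond the pointed case.
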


A generalization of this result is provided for finite dimensional Hopf actions on commutative domains (Theorem~\ref{inva2}) and on Azumaya algebras (Theorem~\ref{inva3}).

Further, we point out that finite dimensional Galois-theoretical Hopf algebras are not necessarily pointed nor semisimple [Example~\ref{ex:nonptedGT}]. 

\begin{remark} It is interesting to consider the quasiclassical analogue of our study of finite dimensional Hopf actions on commutative domains (that are faithful in some sense). 
To do so, let $G$ be a Poisson algebraic group and let $X$ be an irreducible algebraic variety with zero Poisson bracket. Then, the corresponding problem is to (1) determine which of such $G$ can have a faithful Poisson action on a variety $X$ as above, and (2) classify such actions.
In particular, for $G'$ a closed subgroup of $G$, this includes the problem of classifying Poisson homogeneous spaces $X=G/G'$
which have zero Poisson bracket. Here, the Poisson bracket on the group
$G$ is not necessarily zero. See \cite{Drinfeld1993}, \cite{Karolinsky2001}, and \cite{Zak1995} for further reading.
\end{remark}

This paper is organized as follows. Background material on pointed Hopf algebras and Hopf algebra actions is provided in Section~\ref{sec:background}. This includes a discussion of Hopf algebras of finite Cartan type, of quantum groups at roots of unity, and of twists of Hopf algebras and Hopf module algebras. We define and provide preliminary results on the Galois-theoretical property in Section~\ref{sec:prelim}. The proof of Theorem~\ref{thm:main} is established in Section~\ref{sec:examples} via Propositions~\ref{prop:T(n)}, \ref{prop:E(n)}, \ref{prop:genTaft}--\ref{prop:H81}, \ref{prop:uqsl2}, \ref{prop:uq(gl2)}, \ref{prop:uqglntwist}, \ref{prop:uqgpostwist}, and Corollary~\ref{cor:uqslntwist}.

%%%%%%%%%%%%%%%%%%%%%%%%%%%%%%%%%%%%%%%
%%%%%%%%%%%%%%%%%%%%%%%%%%%%%%%%%%%%%%%
%%%%%%%%%%%%%%%%%%%%%%%%%%%%%%%%%%%%%%%

\section{Background material} \label{sec:background}

In this section, we provide a background discussion of pointed Hopf algebras, especially those of finite Cartan type (Section~\ref{subsec:pointed}) and quantum groups at roots of unity (Section~\ref{rootof1}). We also discuss Hopf algebra actions on $\kk$-algebras (Section~\ref{subsec:actions}) and Drinfeld twists of these actions (Section~\ref{subsec:twist}). Consider the notation below, which will be explained in the following discussion. Unless specified otherwise: 
\medskip

\hspace{.1in} $\kk$ =  an algebraically closed base field of characteristic zero\\
$\zeta$, $q$, $\omega$ = a primitive root of unity in $\kk$ of order $n$, $m$, and 3, respectively\\
\indent \hspace{.06in} $H$ = a  finite dimensional Hopf algebra with coproduct $\Delta$, counit $\varepsilon$, antipode $S$\\
\indent \hspace{.06in} $G$ = the group of grouplike elements $G(H)$ of $H$\\
\indent \hspace{.06in} $\widehat{G}$ = character group of $G$ = $\{\alpha: G \rightarrow \kk^{\times}\}$\\
\indent \hspace{.06in} $A$ = an $H$-module algebra over $\Bbbk$\\
\indent \hspace{.06in} $L$ = an $H$-module field containing  $\kk$\\
\indent \hspace{.06in} $F$ = the subfield of invariants $L^H$
%\indent $E$ = an intermediate field of the extension $L^H \subset L$

%We set the notation below for the rest of the article.
%\smallskip
%
%
%\indent \hspace{.33in} $\widehat{G}$ = character group of $G$ = $\{\alpha: G \rightarrow \kk^{\times}\}$\\
%\indent \hspace{.18in}${}^G_G \mc{YD}$ = the category of Yetter-Drinfeld modules over $G$\\
%\indent \hspace{.13in} $(V,c)$ = finite dimensional braided vector space in ${}^G_G\mc{YD}$ with basis $x_1, \dots, x_{\theta}$
%
%Since $G$ is abelian, we have that $V$ is just a $G$-graded $G$-module. 

\subsection{Grouplike and skew primitive elements, and pointed Hopf algebras}
\label{subsec:pointed}
Consider the following notation and terminology.
A nonzero element $g \in H$ is {\it grouplike} if $\Delta(g) = g \otimes g$, and the group of grouplike elements of $H$ is denoted by $G=G(H)$. An element $x \in H$ is 
{\it $(g,g')$-skew primitive}, if for grouplike elements $g, g'$ of $G(H)$, we have that $\Delta(x) = g \otimes x + x \otimes g'$. The space of such elements is denoted by $P_{g,g'}(H)$.

The {\it coradical} $H_0$ of  a Hopf algebra $H$ is the sum of all simple subcoalgebras of $H$. The {\it coradical filtration} $\{H_n\}_{n \geq 0}$ of $H$ is defined inductively by 
$$H_n = \Delta^{-1}(H \otimes H_{n-1} + H_0 \otimes H),$$
where $H= \bigcup_{n \geq 0} H_n$.

We say that a Hopf algebra $H$ is {\it pointed}  if all of its simple $H$-comodules (or equivalently, if all of its simple $H$-subcoalgebras) are 1-dimensional. When $H$ is pointed, we have that $H_0 = \kk  G$ and $H_1=\kk G + \left( \sum_{g,g'\in G}P_{g,g'}(H)\right)$. Although this sum is not direct, one has 
$H_1/H_0=\bigoplus_{g,g'\in G}\overline{P}_{g,g'}(H)$, where $\overline{P}_{g,g'}(H)$ is the image of $P_{g,g'}(H)$ in $H_1/H_0$. 
One can verify easily the following result.

\begin{lemma} \label{lem:pointed} (a) The coradical $H_0$ of a Hopf algebra $H$ is the group algebra $\kk G(H)$ if and only if $H$ is pointed.

\smallskip

\noindent (b) If a Hopf algebra $H$ is generated by grouplike and skew primitive elements, then $H$ is pointed. 
\qed
\end{lemma}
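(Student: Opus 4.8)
The plan is to prove both parts directly from the definitions, using the standard structure theory of coalgebras. For part (a), the ``if'' direction is essentially immediate: if $H$ is pointed, then by definition all simple subcoalgebras are one-dimensional, hence of the form $\kk g$; a one-dimensional subcoalgebra $\kk g$ with $g \neq 0$ forces $\Delta(g) = g \otimes g$ (after normalizing) and $\varepsilon(g) = 1$, so $g$ is grouplike, and conversely each grouplike element spans such a subcoalgebra. Thus $H_0 = \sum_{g \in G(H)} \kk g = \kk G(H)$, where the sum is direct because distinct grouplike elements are linearly independent (a standard fact, provable by a short induction or via the dual argument with characters). For the ``only if'' direction, I would argue the contrapositive: if $H$ is not pointed, it has a simple subcoalgebra $C$ of dimension $\geq 2$; by the coalgebra analogue of the Artin–Wedderburn / Jacobson density theorem (the fundamental theorem of coalgebras, or the fact that a simple coalgebra is the dual of a matrix algebra), $C$ has a one-dimensional simple comodule only if $C$ itself is one-dimensional, so $C$ is not contained in $\kk G(H)$; hence $H_0 \supsetneq \kk G(H)$.

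For part (b), suppose $H$ is generated as an algebra by a set $\Gamma$ of grouplike elements together with a set $\{x_i\}$ of skew-primitive elements, with $x_i \in P_{g_i, g_i'}(H)$. The plan is to show every generator lies in $H_1$, and then use the fact that $H_1$ generates a Hopf subalgebra whose coradical filtration is compatible — more precisely, I would invoke the key structural fact that if a subspace generating $H$ as an algebra lies in $H_1$, then $H = \sum_n H_1^{\wedge n}$ is the coradical filtration generated by $H_1$, and in particular $H_0$ is generated as a coalgebra by the grouplikes appearing. Concretely: each grouplike $g \in \Gamma$ lies in $H_0 \subseteq H_1$; each $x_i$ satisfies $\Delta(x_i) = g_i \otimes x_i + x_i \otimes g_i' \in H_0 \otimes H + H \otimes H_0 \subseteq H_0 \otimes H + H \otimes H_0$, which is exactly the condition (via $\Delta^{-1}$) placing $x_i \in H_1$. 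Since the $H_n$ form an algebra filtration in the sense that $H_m H_n \subseteq H_{m+n}$ (the coradical filtration is an algebra filtration on a Hopf algebra), the subalgebra generated by $\Gamma \cup \{x_i\}$ — namely all of $H$ — is contained in $\bigcup_n H_1^{\,n} \subseteq \bigcup_n H_n$, consistent with $H = \bigcup_n H_n$, and moreover the associated graded is generated in degrees $0$ and $1$. The upshot is that $H_0$ cannot acquire any new simple subcoalgebras beyond those spanned by products of grouplikes, so $H_0 = \kk G(H)$, and part (a) then gives that $H$ is pointed.

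The main obstacle is making the argument in part (b) rigorous without circularity: one must be careful that ``generated by $H_1$ as an algebra'' genuinely controls $H_0$. The cleanest route is to cite the standard result (e.g.\ Montgomery's book, or Radford) that if $H$ is generated as an algebra by $H_1$ — or even just by $H_0$ together with skew-primitives — then $H_0$ coincides with the coradical of that algebra-generated structure, which is the span of the grouplikes; equivalently, one uses that the coradical filtration is the unique coalgebra filtration with $0$-th term the coradical, combined with the fact that the algebra filtration generated by $H_1$ is a coalgebra filtration whose $0$-th term is $\kk G(H)$. Since the statement is asserted to ``follow easily,'' I would keep this step short, citing the relevant structural lemma rather than reproving it.
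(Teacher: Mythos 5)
The paper offers no proof of this lemma --- it is stated with a ``one can verify easily'' and a \qed{} --- so there is no argument of the authors' to compare against; judged on its own, your plan is correct and is the standard one. Part (a) is fine: one-dimensional subcoalgebras are exactly the spans of grouplikes, distinct grouplikes are linearly independent, and a simple subcoalgebra of dimension $\ge 2$ cannot sit inside $\kk G(H)$ (every subcoalgebra of a group algebra is spanned by grouplikes), so $H_0=\kk G(H)$ iff $H$ is pointed. For part (b), you correctly identify that the first, ``associated graded'' version of your argument is vacuous, and the fix you name is the right one; just state it precisely. The fact to cite is not that the coradical filtration is the \emph{unique} coalgebra filtration with bottom term $H_0$ (it is not), but that it is \emph{minimal}: for any coalgebra filtration $\{F_n\}$ of $H$ one has $H_0\subseteq F_0$ (Montgomery, Lemma 5.3.4, or Sweedler). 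Also, to avoid circularity you should build the filtration from the span of the \emph{given} generators, not from the intrinsic $H_1$: with $F_0=\kk\Gamma$ the span of the group generated by the grouplike generators and the $g_i,g_i'$, and $F_n=(F_0+\sum_i \kk x_i F_0)^n$, the explicit coproducts $\Delta(x_i)=g_i\otimes x_i+x_i\otimes g_i'$ show $\Delta F_n\subseteq\sum_{a+b=n}F_a\otimes F_b$, while using $H_1$ itself only gives $\Delta(H_1)\subseteq H_0\otimes H_1+H_1\otimes H_0$, which presupposes control of $H_0$. With that filtration, minimality gives $H_0\subseteq \kk G(H)$, hence equality, and (a) finishes the proof. (An equivalent clean route: $(CD)_0\subseteq C_0D_0$ for subcoalgebras $C,D$, so products of pointed subcoalgebras are pointed and $H$ is a directed union of such.) These are minor precision issues; the proposal is sound.
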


The converse of part (b) is expected in the finite dimensional case.

\begin{conjecture} \label{conj:AS} \cite[Conjecture~1.4]{AndSch:finite} 
A finite dimensional pointed Hopf algebra over an algebraically closed field of characteristic zero is generated by grouplike and skew primitive elements.
\end{conjecture}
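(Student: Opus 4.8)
The plan is to attack Conjecture~\ref{conj:AS} by the \emph{lifting method} of Andruskiewitsch and Schneider, reducing the statement to a question about Nichols algebras. The first step is to pass to the associated graded Hopf algebra. Since $H$ is pointed, Lemma~\ref{lem:pointed}(a) gives $H_0 = \kk G$, and the coradical filtration $\{H_n\}$ is a Hopf algebra filtration; so $\mathrm{gr}(H) = \bigoplus_{n\ge 0} H_n/H_{n-1}$ is a graded Hopf algebra with $\mathrm{gr}(H)_0 = \kk G$. The elementary but crucial observation is that $H$ is generated as an algebra by grouplike and skew primitive elements \emph{if and only if} $\mathrm{gr}(H)$ is generated in degrees $0$ and $1$: a standard filtered-to-graded argument shows that lifts of a homogeneous generating set of $\mathrm{gr}(H)$ concentrated in degrees $\le 1$ lie in $H_1 = \kk G + \sum_{g,g'} P_{g,g'}(H)$, i.e. are grouplikes and skew primitives, and conversely the images of such generators of $H$ generate $\mathrm{gr}(H)$ in degrees $0,1$. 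Thus I would first prove this equivalence and thereby replace $H$ by the graded, hence more rigid, object $\mathrm{gr}(H)$.

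The second step uses the Radford--Majid bosonization theorem: because $\mathrm{gr}(H)_0 = \kk G$ is a Hopf subalgebra admitting a Hopf projection, one has $\mathrm{gr}(H) \cong R \,\#\, \kk G$, where $R = \bigoplus_{n\ge 0} R(n)$ is the \emph{diagram} of $H$, a graded braided Hopf algebra in the category of Yetter--Drinfeld modules over $\kk G$, with $R(0) = \kk$ and $R(1) = V$ the infinitesimal braiding. Under this isomorphism the condition ``$\mathrm{gr}(H)$ is generated in degrees $0,1$'' translates exactly into ``$R$ is generated by $R(1) = V$''. Since the subalgebra of $R$ generated by $V$ is by definition the Nichols algebra $\mf{B}(V)$, the conjecture becomes equivalent to the assertion $R = \mf{B}(V)$; that is, no new generators of the diagram occur in degree $\ge 2$.

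The third step is therefore to establish this \emph{generation in degree one} for the diagram of every finite-dimensional pointed $H$. The tractable case is when $G$ is abelian: then $V$ is a braided vector space of diagonal type, and one can invoke the structure theory of Nichols algebras of diagonal type. Here I would compare $R$ with $\mf{B}(V)$ degree by degree, showing inductively that any homogeneous element of $R(n)$, $n\ge 2$, that is primitive in the braided sense already lies in the subalgebra generated by $V$; when $V$ is of finite Cartan type this reduces to checking that the only new relations are the quantum Serre relations, whose degrees are fixed by the Cartan matrix, so that an Angiono-type presentation of $\mf{B}(V)$ closes off the algebra with no higher primitives surviving.

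The hard part---and the reason the statement remains a conjecture rather than a theorem---is the absence of any such control for \emph{nonabelian} $G$. When the braiding on $V$ is not of diagonal type there is at present no classification of the finite-dimensional Nichols algebras $\mf{B}(V)$, no uniform bound on the degrees of their defining relations, and hence no general mechanism to rule out ``extra'' primitive generators of $R$ in high degree. A complete proof would require either such a classification or a degree bound on the generators of a finite-dimensional Nichols algebra valid over an arbitrary finite group, and producing that bound is exactly the obstacle I do not know how to clear in general. Consequently I would present the reduction above as the honest shape of a proof, and record the abelian/finite-Cartan case as the portion that is within reach of current methods---which is all the present paper actually requires.
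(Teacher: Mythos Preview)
The statement you were asked to address is a \emph{conjecture}, not a theorem, and the paper does not attempt to prove it: it simply records Conjecture~\ref{conj:AS} as an open problem from the literature and then cites Angiono's theorem (Theorem~\ref{thm:ASconjAbel}) settling the abelian-$G$ case, which is all that is needed downstream. So there is no ``paper's own proof'' to compare against, and you correctly recognise this in your final paragraph.

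Your outline of the lifting-method reduction is the standard one and is broadly accurate, but one technical slip is worth flagging: the sentence ``the subalgebra of $R$ generated by $V$ is by definition the Nichols algebra $\mf{B}(V)$'' is not correct. The subalgebra $\langle V\rangle\subset R$ is only a \emph{pre}-Nichols algebra (a graded quotient of $T(V)$ that surjects onto $\mf{B}(V)$); it equals $\mf{B}(V)$ precisely when, in addition, its space of primitives is $V$. The equivalence ``$R$ generated in degree $1$'' $\Leftrightarrow$ ``$R=\mf{B}(V)$'' is not tautological: in the finite-dimensional setting it goes through a duality argument ($R$ generated in degree $1$ $\Leftrightarrow$ $P(R^{*})=R^{*}(1)$), and one then needs the conjecture for $R^{*}$ as well. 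This does not affect your overall conclusion, but it does mean that even the reduction step is slightly more delicate than you state.
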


In fact, the conjecture holds in the setting of our work.

\begin{theorem} \label{thm:ASconjAbel} \cite[Theorem~4.15]{Angiono}
Conjecture~\ref{conj:AS} holds when $G$ is abelian. \qed
\end{theorem}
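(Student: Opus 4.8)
\emph{Reduction to the diagram.}
We must show that a finite-dimensional pointed Hopf algebra $H$ over $\kk$ with $G := G(H)$ abelian is generated by grouplike and skew-primitive elements. Since the coradical filtration of a Hopf algebra satisfies $H_iH_j\subseteq H_{i+j}$, this is equivalent to $H = \kk\langle H_1\rangle$, hence (using Lemma~\ref{lem:pointed}) to the associated graded Hopf algebra $\mathrm{gr}\,H$, taken with respect to the coradical filtration, being generated in degrees $0$ and $1$. As $(\mathrm{gr}\,H)_0 = \kk G$, the theory of Hopf algebras with a projection gives $\mathrm{gr}\,H \cong R\#\kk G$, where the diagram $R := (\mathrm{gr}\,H)^{\,\mathrm{co}\,\kk G}$ is a connected $\N$-graded braided Hopf algebra in the category ${}^{\kk G}_{\kk G}\mc{YD}$ of Yetter--Drinfeld modules over $\kk G$, with degree-one component $R_1 = V := H_1/H_0$; since $G$ is abelian the braiding on $V$ is diagonal. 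Under this identification $\mathrm{gr}\,H$ is generated in degrees $\le 1$ exactly when $R$ is generated by $R_1$, i.e. exactly when $R = \mf B(V)$, the Nichols algebra of $V$ --- which by definition is the subalgebra $\kk\langle R_1\rangle$ of $R$. So the goal is: $R = \mf B(V)$.

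\emph{No primitive elements in degree $\ge 2$.}
Here the point of the coradical grading is that every skew-primitive element of $\mathrm{gr}\,H$ lies in the coradical term $(\mathrm{gr}\,H)_1$, which by coradical-gradedness is the sum of the degree-$0$ and degree-$1$ components. A $G$-homogeneous braided-primitive $r\in R$ of degree $\ell$ yields, via the bosonization, a skew-primitive element $r\#1\in\mathrm{gr}\,H$ of coradical degree $\ell$; hence $\ell\le 1$. As $R_0 = \kk$ carries no primitives in its augmentation ideal, we get $P(R) = R_1 = V$. Moreover $\dim R = \dim\mathrm{gr}\,H = \dim H <\infty$, so $\mf B(V) = \kk\langle R_1\rangle\subseteq R$ is finite-dimensional. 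We have thus reduced everything to the purely braided assertion at the heart of the matter: \emph{a finite-dimensional connected $\N$-graded braided Hopf algebra $R$ in ${}^{\kk G}_{\kk G}\mc{YD}$, $G$ abelian, with $P(R) = R_1$, equals $\mf B(R_1)$.}

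\emph{The braided assertion.}
One half is soft: $R' := \kk\langle R_1\rangle$ is a braided Hopf subalgebra and a pre-Nichols algebra of $\mf B(V)$ (a graded braided Hopf algebra generated in degree one that surjects onto $\mf B(V)$), and since $P(R')\subseteq P(R) = R_1$, inspecting the least nonzero degree of the graded Hopf ideal $\ker(R'\twoheadrightarrow\mf B(V))$ shows this ideal consists of primitives of degree $\ge 2$ and is therefore zero, so $R' = \mf B(V)$. What remains is to show $R$ has no algebra generators beyond $R_1$, so that $R = R' = \mf B(V)$; this is precisely the ``generation in degree one'' theorem of Angiono, and its proof relies on Heckenberger's classification of finite-dimensional Nichols algebras of diagonal type. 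The strategy is to assume $R\supsetneq \mf B(V)$, pick a $G$-homogeneous generator of $R$ of minimal degree $\ell\ge 2$ lying outside $\mf B(V)$, analyze its reduced coproduct --- which lies in $\mf B(V)\otimes\mf B(V)$ --- and, using the explicit presentation of $\mf B(V)$ by generators and relations (quantum Serre relations, the finitely many ``exotic'' relations attached to the non-Cartan entries of Heckenberger's list, and the root-vector power relations $x_\beta^{N_\beta}$ over the positive roots $\beta$), derive a contradiction: either the generator can be adjusted to a nonzero primitive of degree $\ge 2$, against $P(R) = R_1$, or it forces an infinite-dimensional subalgebra --- typically a polynomial ring $\kk[z]$ on a homogeneous element $z$ with $q_{zz}=1$ (for instance $z = x_\beta^{N_\beta}$, whose self-braiding is $q_{\beta\beta}^{N_\beta^2}=1$), which in characteristic zero admits no Hopf-compatible truncation --- against $\dim R<\infty$.

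\emph{Main obstacle.}
The bulk of the work, and the step one cannot shortcut, is carrying out this last dichotomy uniformly over Heckenberger's list, in particular for the root-vector power relations and for the exotic super- and modular-type braidings. There the root vectors $x_\beta$ are intricate iterated braided commutators produced from convex orders on the root system furnished by the Weyl groupoid of $V$, and establishing the needed primitivity and self-braiding statements (modulo the Serre-type and exotic relations) demands the full PBW/convex-order combinatorics together with careful simultaneous bookkeeping of the $\N$-grading and the $G$-grading --- that is, the complete Heckenberger--Angiono apparatus.
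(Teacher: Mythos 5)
The statement you are asked to prove is not proved in the paper at all: it is imported verbatim as \cite[Theorem~4.15]{Angiono}, with the \verb|\qed| marking a quoted external result. So the relevant comparison is between your write-up and Angiono's actual argument, and on that score your reduction is accurate and standard: passing to $\mathrm{gr}\,H$ for the coradical filtration, splitting off $\kk G$ via the Radford biproduct to get the diagram $R$ with $R_1=V=H_1/H_0$ of diagonal type, observing that coradical-gradedness forces $P(R)=R_1$ (every skew-primitive lies in the first filtration term), and noting that the ``soft half'' $\kk\langle R_1\rangle=\mf B(V)$ follows by looking at the lowest degree of the kernel of the projection onto $\mf B(V)$. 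All of this is correct and is exactly the Andruskiewitsch--Schneider framework within which Angiono's theorem is stated.

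The gap is the one you name yourself: the entire content of the theorem is the ``braided assertion'' that a finite-dimensional connected coradically graded braided Hopf algebra $R$ over abelian $G$ with $P(R)=R_1$ equals $\mf B(R_1)$, and your proposal describes the strategy for this (minimal-degree extra generator, reduced coproduct landing in $\mf B(V)\otimes\mf B(V)$, dichotomy between a forbidden higher-degree primitive and a forbidden polynomial subalgebra on an element of self-braiding $1$) without executing it. That execution is a case-by-case analysis over Heckenberger's classification of arithmetic root systems, including the convex-order/PBW combinatorics needed to control the root-vector power relations $x_\beta^{N_\beta}$ and the exotic relations of super and modular type; none of that is reproduced or even reducible to a short argument. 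As a self-contained proof the proposal therefore fails at precisely the step where the theorem stops being formal; as an explanation of why the citation to Angiono suffices, it is correct and somewhat more informative than the paper, which offers only the reference.
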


As a consequence, a finite dimensional pointed Hopf algebra $H$ over $\kk$ is a {\it lifting} of a bosonization of a Nichols algebra $\mf{B}(V)$ by the group $G$.  In other words, $gr(H) \cong \mf{B}(V) \# \kk G$ in this case.
Moreover, we consider a special subclass of pointed Hopf algebras, that of {\it finite Cartan type}. Refer to \cite{AndSch:survey} and \cite{AndSch:pointed} for further details.

\begin{definition}{} \label{def:V,c} 
Let $(V,c)$ be a finite dimensional braided vector space. 
\begin{itemize}
\item $(V,c)$ is of {\it diagonal type} if there exists a basis $x_1, \dots, x_{\theta}$ of $V$ and scalars $q_{ij}\in \kk^\times$ so that
$$c(x_i \otimes x_j) = q_{ij} (x_j \otimes x_i)$$
for all $1 \leq i,j \leq \theta$. The matrix $(q_{ij})$ is called the {\it braiding matrix}.
\item $(V,c)$ is of {\it finite Cartan type} if it is of diagonal type and 
\begin{equation} \label{eq:qij Cartan}
q_{ii} \neq 1 \text{ and } q_{ij}q_{ji} = q_{ii}^{a_{ij}}
\end{equation}
where $(a_{ij})_{1 \leq i,j \leq \theta}$ is a Cartan matrix associated to a semisimple Lie algebra.
\item The same terminology applies to a Hopf algebra $H$ when $gr(H) \cong \mf{B}(V) \#\kk G$.
\end{itemize}
\end{definition}

 Many examples of finite dimensional pointed algebras of finite Cartan type are provided throughout Section~\ref{sec:examples}; refer to the tables in Theorem~\ref{thm:main} for a summary.

\subsection{Quantum groups at roots of unity}\label{rootof1}

Let us recall  facts about quantum groups at roots of unity, which are examples of pointed Hopf algebras of finite Cartan type.
Consider the following notation. Let ${\mathfrak{g}}$ be a finite dimensional simple Lie algebra over $\kk$
of rank $r$ with Cartan matrix $(a_{ij})$ for $i,j=1,...,r$. Let $d_i$, for $i=1,...,r$, be relatively prime integers so that the matrix $(d_ia_{ij})$ is symmetric and
 positive definite. Let $q$ be an indeterminate.

Now consider the following Hopf algebra.

\begin{definition} \label{def:DJform} \cite{Drinfeld85} \cite{Drinfeld86} \cite{Jimbo} The Hopf algebra ${\mathcal U}_q(\mathfrak{g})$, referred to as {\it the Drinfeld-Jimbo quantum group attached to ${\mathfrak{g}}$}, is generated over $\kk[q,q^{-1}]$ 
by grouplike elements $k_i$, $(k_i,1)$-skew primitive elements $e_i$, and
$(1,k_i^{-1})$-skew primitive elements $f_i$, for $i =1, \dots, r$, with defining relations:
\[ 
\begin{array}{c} 
\vspace{.1in} 
k_ie_jk_i^{-1} =
q^{d_ia_{ij}} e_j,\quad \quad k_if_jk_i^{-1} = q^{-d_ia_{ij}} f_j, \quad \quad e_if_j - f_je_i = \delta_{ij}
\displaystyle \frac{k_i - k_i^{-1}}{q^{d_i}-q^{-d_i}},\\  
k_ik_j = k_jk_i, \quad \quad k_ik_i^{-1} = k_i^{-1}k_i =1,\\ 
\displaystyle \sum_{p=0}^{1-a_{ij}} (-1)^p {\small 
\begin{bmatrix} 1-a_{ij}\\p\end{bmatrix}_{q^{d_i}} }
e_i^{1-a_{ij}-p}e_je_i^p ~=~0 \quad \text{ for $i \neq j$},\\
\displaystyle \sum_{p=0}^{1-a_{ij}} (-1)^p {\small
  \begin{bmatrix}1-a_{ij}\\p\end{bmatrix}_{q^{d_i}} }
f_i^{1-a_{ij}-p}f_jf_i^p~=~0 \quad \text{ for $i \neq j$}.
\end{array} 
\] 
Here, ${\small \begin{bmatrix} n \\i \end{bmatrix}_q }=
\displaystyle \frac{[n]_q !}{[i]_q![n-i]_q!}$, where $[n]_q =
\displaystyle \frac{q^n - q^{-n}}{q-q^{-1}}$.
\end{definition}

\medskip

At roots of unity, the Hopf algebra  ${\mathcal U}_q(\mathfrak{g})$ has a finite dimensional quotient, which is defined below in Definition~\ref{def:FLkernel}.
To proceed, we must define root vectors, which was done by Lusztig   using the braid group action on  ${\mathcal U}_q(\mathfrak{g})$ \cite[Theorems~3.2,~6.6(ii) and Section~4.1]{Lusztig:rootof1}. 
Fix a reduced decomposition $S$ of the maximal element $w_0$ of the Weyl group of $W$:
 $w_0=s_{i_\ell} \dots s_{i_1}$. To this decomposition, there corresponds a normal ordering of positive roots: 
$\alpha_S^{(1)}=\alpha_{i_1}$,  $\alpha_S^{(2)}=s_{i_1}(\alpha_{i_2})$, $\dots,$ \linebreak $\alpha_S^{(i_\ell)}=s_{i_1}\cdots s_{i_{\ell-1}}(\alpha_{i_\ell})$ \cite{Zhelobenko}. 
It is known that every positive root occurs in this sequence exactly once, and a root $\alpha+\beta$ always occurs between $\alpha$ and $\beta$. 
So, given a positive root $\alpha$, let $N$ be the unique number such that $\alpha=\alpha_S^{(N)}$, and let $w_\alpha^S=s_{i_{N-1}}\cdots s_{i_1}$
so that $\alpha=w_\alpha^S(\alpha_{i_N})$. 
Define the root vectors by the formula 
$$e_\alpha^S:=T_{w_\alpha^S}(e_{i_N}) \quad \text{and} \quad
f_\alpha^S:=T_{w_\alpha^S}(f_{i_N}),$$
where, for a Weyl group element $w$, $T_w$ is the corresponding element of the braid group. Here, if $\alpha=\alpha_i$ is a
simple root, then $e_\alpha=e_i$ and $f_\alpha=f_i$.

Now we specialize to a root of unity.
\begin{equation} \label{eq:order}
\text{ Let $q \in \kk$ be a root of unity of odd order $m \geq 3$, with $m>3$ for type G$_2$.}
\end{equation}

\begin{definition} \label{def:DCK} \cite[Section~1.5]{deConciniKac} The Hopf algebra $\mathcal{U}_q(\mathfrak{g})$ specialized to a root of unity $q$ as in \eqref{eq:order} is known as the {\it Kac-de Concini quantum group of $\mf{g}$}.
\end{definition}

The desired finite dimensional quotient of  ${\mathcal{U}}_q({\mathfrak{g}})$ is now given as follows.

\begin{definition} \label{def:FLkernel} \cite[Section~5.7]{Lusztig:FinDim}
Take $q \in \kk$ satisfying \eqref{eq:order}. There exists a finite dimensional Hopf quotient of  ${\mathcal U}_q(\mathfrak{g})$  called 
 {\it the small quantum group} (or {\it the Frobenius-Lusztig kernel})
 {\it attached to 
 ${\mathfrak{g}}$}, denoted by $u_q({\mathfrak{g}})$. Namely, $u_q(\mf{g}) = {\mathcal U}_q(\mathfrak{g})/I$, where the Hopf ideal $I$ is generated by 
\begin{itemize}
\item $k_i^m =1$, for $i = 1, \dots, r$, and
\item (nilpotency relations)  $(e_\alpha^S)^m= (f_\alpha^S)^m=0$, for all positive roots $\alpha$.
\end{itemize}
 \end{definition}

 Even though the elements $e_\alpha^S$, $f_\alpha^S$ 
depend on $S$, the ideal $I$ is independent 
of the choice of $S$ \cite[Theorem~3.2]{Lusztig:rootof1}.  It is also known that the elements $(e_\alpha^S)^m$ and $(f_{\alpha}^S)^m$, along with $k_i^m$, are central in ${\mathcal{U}}_q({\mathfrak{g}})$ \cite[Corollary~3.1(a)]{deConciniKac}. The Hopf algebra $u_q({\mathfrak{g}})$
is a finite dimensional pointed Hopf algebra of dimension $m^{\dim{\mathfrak{g}}}$.

The Hopf algebra $u_q({\mathfrak{g}})$ has Hopf subalgebras $u_q^{\ge 0}({\mathfrak{g}})$, 
$u_q^{\le 0}({\mathfrak{g}})$, generated by the $k_i,e_i$ and the $k_i,f_i$, respectively, 
and subalgebras $u_q^+({\mathfrak{g}})$, 
$u_q^-({\mathfrak{g}})$, generated by the $e_i$ and by the $f_i$, respectively.
They are quotients of the corresponding subalgebras 
${\mathcal{U}}_q^{\ge 0}({\mathfrak{g}})$, ${\mathcal{U}}_q^{\le 0}({\mathfrak{g}})$,
${\mathcal{U}}_q^+({\mathfrak{g}})$, 
${\mathcal{U}}_q^-({\mathfrak{g}})$ of
${\mathcal{U}}_q({\mathfrak{g}})$, respectively.
 
 \begin{remark} \cite{Lusztig:rootof1} 
It is known that $u_q(\mf{g})$ is the finite dimensional 
Hopf subalgebra generated by $e_i,f_i,k_i$ inside Lusztig's ``big'' quantum 
enveloping algebra with divided powers, $U_q({\mathfrak{g}})$,
specialized to the root of unity. In fact, one has an exact sequence of Hopf algebras 
$u_q({\mathfrak{g}})\to U_q({\mathfrak{g}})\to U({\mathfrak{g}})$, 
where the second map is the quantum Frobenius map \cite[Section~8]{Lusztig:rootof1}. This is why $u_q(\mf{g})$ is also referred to as the Frobenius-Lusztig kernel.
 \end{remark}

\subsection{Hopf algebra actions} \label{subsec:actions}

We recall basic facts about Hopf algebra actions; refer to
\cite{Montgomery} for further details.  
A left $H$-module $M$ has a left $H$-action structure map denoted by $\cdot : H \otimes M \rightarrow M$.

\begin{definition} \label{def:Hopfact} Given a Hopf algebra $H$ and an algebra $A$, we say
that {\it $H$ acts on $A$} (from the left) if $A$ is a left $H$-module,
$h \cdot (ab) = \sum (h_1 \cdot a)(h_2 \cdot b)$,
and $h \cdot 1_A = \varepsilon(h)
1_A$ for all $h \in H$,  $a,b \in A$. Here, $\Delta(h) =
\sum h_1 \otimes h_2$ (Sweedler notation). In this case, we also say that $A$ is a {\it left $H$-module algebra}.

In the case that $H$ acts on a field $L$, we refer to $L$ as an {\it $H$-module field}.

\end{definition}

We restrict ourselves to $H$-actions that do not factor
through `smaller' Hopf algebras.

\begin{definition} \label{def:infaith} Given a left $H$-module $M$, we say that $M$ is an {\it
inner faithful} $H$-module if $IM\neq 0$ for every nonzero Hopf ideal
$I$ of $H$. Given an action of a Hopf algebra $H$ on an algebra $A$, we say that this
 action is  {\it inner faithful} if the left $H$-module  algebra $A$ is inner
faithful.
\end{definition}

When given an $H$-action on $A$, one can always pass uniquely to an inner faithful $\bar{H}$-action on $A$, where $\bar{H}$ is some quotient Hopf algebra of $H$. 

We also consider elements of $A$ invariant under the $H$-action on $A$.

\begin{definition}
Let $H$ be a Hopf algebra that acts on a $\kk$-algebra $A$ from the left. The {\it subalgebra of invariants} for this action is given by
$$A^H = \{ a \in A ~|~ h \cdot a  = \varepsilon(h) a~~ \text{ for all } h \in H\}.$$
\end{definition}

\subsection{Twists of Hopf algebras and of $H$-module algebras} \label{subsec:twist}
Let $J= \sum J^1 \otimes J^2$ be an invertible element in $H \otimes H$. Then, $J$ is a {\it Drinfeld twist} for $H$ if 
\begin{itemize}
\item $[(\Delta \otimes id)(J)](J \otimes 1) = [(\id \otimes \Delta)(J)](1 \otimes J)$, and
\item $(id \otimes \varepsilon)(J) = (\varepsilon \otimes id)(J) = 1.$
\end{itemize}

\begin{definition}{} 
(1) The Hopf algebra $H^J$ is a {\it Drinfeld twist of $H$ with respect to $J$} if $H^J = H$ as an algebra and  $H^J$ has the same counit as $H$ and coproduct and antipode given by
$$\Delta^J(h)= J^{-1} \Delta(h) J \quad \text{and} \quad S^J(h) = Q^{-1}S(h)Q,$$
where $Q = m(S \otimes id)J$, for all $h \in H$.

(2) Let $A$ be a left $H$-module algebra. Then, the {\it twisted algebra} $A_J$ has the same underlying vector space as $A$, and for $a, b \in A$, the multiplication of $A_J$ is given by
$$a \ast_J b = \sum (J^1 \cdot a )(J^2 \cdot b).$$
\end{definition}

Note that $J^{-1}$ is a twist for $H^J$, and $(H^J)^{J^{-1}}\cong H$. 
Also, if $A$ is an inner faithful left $H$-module algebra, then $A_J$ is an inner faithful left $H^J$-module algebra by using the same action of $H$ on the underlying vector space of $A$, and $(A_J)_{J^{-1}} \cong A$ as $H$-module algebras.

As discussed in \cite[page~799]{GKM}, Drinfeld twists $J$ have a special form when $H = \kk G$ for $G$ finite abelian.  For any $\chi \in \widehat{G}$, let ${\mathbf 1}_{\chi}$ be the idempotent $\frac{1}{|G|}\sum_{g \in G} \chi(g^{-1})g$ in $\kk G$. Then, $J = \sum_{\chi, \psi \in \widehat{G}} \sigma_J(\chi, \psi) {\mathbf 1}_{\chi} \otimes {\mathbf 1}_{\psi}$, for $\sigma_J$ a two-cocycle on $\widehat{G}$ with values in $\Bbbk^\times$. 

We also get an alternating bicharacter $b_J: \widehat{G} \times
\widehat{G} \rightarrow \kk^{\times}$ arising from $J$ given by 
$b_J(\chi, \psi) = \sigma_J(\psi, \chi)/\sigma_J(\chi,\psi)$ 
for all $\chi, \psi \in \widehat{G}$. 

\begin{proposition} \cite[pages~798-799]{GKM} \label{prop:JtobJ}
The assignment $J \mapsto b_J$ is a bijection between gauge equivalence classes of Drinfeld twists and alternating bicharacters. \qed
\end{proposition}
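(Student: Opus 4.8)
The plan is to produce two mutually inverse maps between the set of gauge equivalence classes of Drinfeld twists $J \in \kk G \otimes \kk G$ and the set of alternating bicharacters $b \colon \widehat{G} \times \widehat{G} \to \kk^\times$. First I would unwind the preceding paragraph: every $J$ is written as $J = \sum_{\chi,\psi} \sigma_J(\chi,\psi)\,{\mathbf 1}_\chi \otimes {\mathbf 1}_\psi$ with $\sigma_J$ a normalized $2$-cochain on $\widehat{G}$, and the twist (cocycle) condition $[(\Delta \otimes \id)(J)](J \otimes 1) = [(\id \otimes \Delta)(J)](1 \otimes J)$ translates, after using the orthogonality $\Delta({\mathbf 1}_\chi) = \sum_{\alpha\beta = \chi} {\mathbf 1}_\alpha \otimes {\mathbf 1}_\beta$ and ${\mathbf 1}_\alpha {\mathbf 1}_\beta = \delta_{\alpha,\beta}{\mathbf 1}_\alpha$, exactly into the $2$-cocycle identity $\sigma_J(\chi\psi,\tau)\sigma_J(\chi,\psi) = \sigma_J(\chi,\psi\tau)\sigma_J(\psi,\tau)$; the counit condition gives normalization. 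Thus Drinfeld twists on $\kk G$ are precisely normalized $2$-cocycles on $\widehat{G}$ with values in $\kk^\times$, i.e. $2$-cocycles in group cohomology $Z^2(\widehat{G},\kk^\times)$.

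Next I would identify gauge equivalence with cohomological equivalence. Two twists $J, J'$ are gauge equivalent if $J' = (\Delta(u))^{-1} J (u \otimes u)$ for an invertible $u \in \kk G$ with $\varepsilon(u) = 1$; writing $u = \sum_\chi \lambda(\chi) {\mathbf 1}_\chi$ this says $\sigma_{J'}(\chi,\psi) = \sigma_J(\chi,\psi)\,\lambda(\chi)\lambda(\psi)\lambda(\chi\psi)^{-1}$, i.e. $\sigma_{J'}$ and $\sigma_J$ differ by the coboundary $d\lambda$. So gauge classes of twists are in bijection with $H^2(\widehat{G},\kk^\times)$. Finally, one invokes (or re-proves) the classical fact that for a finite abelian group $\widehat{G}$, the map sending the class of $\sigma$ to $b_\sigma(\chi,\psi) = \sigma(\psi,\chi)/\sigma(\chi,\psi)$ is an isomorphism from $H^2(\widehat{G},\kk^\times)$ onto the group of alternating bicharacters $\widehat{G} \times \widehat{G} \to \kk^\times$ — this is well-defined (coboundaries are symmetric, so the ratio is unchanged), it is a homomorphism, surjectivity follows by splitting $\widehat{G}$ into cyclic factors and writing down an explicit cocycle realizing any prescribed bicharacter on generators, and injectivity follows because a $2$-cocycle with trivial commutator pairing is symmetric, hence a coboundary (every symmetric $2$-cocycle on a group into a divisible abelian group is trivial — here $\kk^\times$ is divisible since $\kk$ is algebraically closed). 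Composing these three bijections — twists $\leftrightarrow Z^2$, gauge classes $\leftrightarrow H^2$, $H^2 \leftrightarrow$ alternating bicharacters — and checking the composite sends the class of $J$ to $b_J(\chi,\psi) = \sigma_J(\psi,\chi)/\sigma_J(\chi,\psi)$ gives the statement.

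The only genuinely substantive point, and the step I expect to be the main obstacle, is the cohomological fact $H^2(\widehat{G},\kk^\times) \cong \{\text{alternating bicharacters}\}$, in particular injectivity: showing that a $2$-cocycle whose antisymmetrization $b_\sigma$ is trivial must itself be a coboundary. This is where one uses that $\widehat{G}$ is finite abelian (reduce to cyclic components) and that $\kk^\times$ is divisible (to kill the symmetric part); for a cyclic group every $2$-cocycle into a divisible group is a coboundary, and for a product one combines this with the Künneth-type decomposition, with the cross terms governed precisely by $\operatorname{Hom}$ into bicharacters. Everything else — translating the twist and counit axioms into the cocycle and normalization conditions, and translating gauge equivalence into cohomological equivalence — is a routine bookkeeping computation with the idempotents ${\mathbf 1}_\chi$, which I would carry out but not belabor. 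Since this proposition is quoted from \cite[pages~798-799]{GKM}, it would in fact suffice to cite that reference; the above is the self-contained argument one would write if a proof were wanted.
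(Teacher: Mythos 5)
Your proposal is correct. The paper gives no proof of its own here — the statement is quoted directly from \cite{GKM} — and your computation (twists on $\kk G$ $\leftrightarrow$ normalized $2$-cocycles on $\widehat{G}$, gauge equivalence $\leftrightarrow$ coboundaries, and the antisymmetrization map inducing $H^2(\widehat{G},\kk^\times)\cong\{\text{alternating bicharacters}\}$, with injectivity via divisibility of $\kk^\times$ killing symmetric cocycles) is precisely the standard argument underlying that citation, so there is nothing to compare beyond noting the agreement.
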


Now we have the following result for twisted polynomial rings.

\begin{theorem} \label{thm:GKM} \cite[Theorem~3.8]{GKM} Let $G$ be an abelian group and let $A=\kk[z_1, \dots z_n]$ be a polynomial ring with a $G$-action such that $z_i$ are common eigenvectors of $G$. Let $\chi_i$ be the character of $G$ corresponding to the $G$-action on the eigenvector $z_i$, that is to say,  $g \cdot z_i = \chi_i(g)z_i$. Then, the twisted algebra $A_J$ has generators $z_i$ with defining relations: $$z_i \ast_J z_j = b_J(\chi_j, \chi_i) z_j \ast_J z_i.$$

\vspace{-.21in}
 \qed
\end{theorem}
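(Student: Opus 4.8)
The plan is to compute the twisted product $z_i \ast_J z_j$ directly from the defining formula $a \ast_J b = \sum (J^1 \cdot a)(J^2 \cdot b)$, using the explicit form of $J$ as a sum over characters. First I would write $J = \sum_{\chi,\psi \in \widehat{G}} \sigma_J(\chi,\psi)\, {\mathbf 1}_\chi \otimes {\mathbf 1}_\psi$ with ${\mathbf 1}_\chi = \frac{1}{|G|}\sum_{g\in G}\chi(g^{-1})g$, and observe that each idempotent ${\mathbf 1}_\chi$ acts on an eigenvector $z_i$ (with $g\cdot z_i = \chi_i(g) z_i$) as the scalar $\frac{1}{|G|}\sum_{g} \chi(g^{-1})\chi_i(g) = \delta_{\chi,\chi_i}$, by orthogonality of characters of the finite abelian group $G$. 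Hence ${\mathbf 1}_\chi \cdot z_i = z_i$ if $\chi = \chi_i$ and $0$ otherwise. This collapses the double sum to a single term, giving $z_i \ast_J z_j = \sigma_J(\chi_i, \chi_j)\, z_i z_j$ in the original commutative product, and likewise $z_j \ast_J z_i = \sigma_J(\chi_j,\chi_i)\, z_j z_i = \sigma_J(\chi_j,\chi_i)\, z_i z_j$.

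Next I would eliminate the ambient product $z_i z_j$ in favor of $\ast_J$-products: from the two computations above, $z_i z_j = \sigma_J(\chi_i,\chi_j)^{-1}\, z_i \ast_J z_j = \sigma_J(\chi_j,\chi_i)^{-1}\, z_j \ast_J z_i$. Combining these yields $z_i \ast_J z_j = \dfrac{\sigma_J(\chi_i,\chi_j)}{\sigma_J(\chi_j,\chi_i)}\, z_j \ast_J z_i = b_J(\chi_j,\chi_i)\, z_j \ast_J z_i$, which is exactly the claimed relation (matching the sign convention $b_J(\chi,\psi) = \sigma_J(\psi,\chi)/\sigma_J(\chi,\psi)$ from the definition preceding Proposition~\ref{prop:JtobJ}). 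One should double-check that the indices land on the correct side, i.e.\ that it is $b_J(\chi_j,\chi_i)$ and not $b_J(\chi_i,\chi_j)$; this is a bookkeeping matter settled by the sign convention in the text.

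To finish, I would argue that these relations are not merely satisfied but are \emph{defining} for $A_J$, i.e.\ that $A_J$ is the $\kk$-algebra on generators $z_1,\dots,z_n$ modulo the relations $z_i \ast_J z_j = b_J(\chi_j,\chi_i)\, z_j \ast_J z_i$. For this I would note that $A_J$ has the same underlying vector space as $A = \kk[z_1,\dots,z_n]$, so the monomials $z_1^{a_1}\cdots z_n^{a_n}$ form a $\kk$-basis of $A_J$; meanwhile the abstract algebra $B$ presented by those generators and relations is a quantum affine space (an iterated Ore extension), which has the ordered monomials as a spanning set, so $B$ has dimension at most that of $A_J$ in each graded piece. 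Since the surjection $B \twoheadrightarrow A_J$ sending generators to generators is well-defined (the relations hold in $A_J$ by the computation above) and both sides have the same Hilbert series, it is an isomorphism.

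The main obstacle is really just the careful handling of the character orthogonality and the two-cocycle/bicharacter sign conventions so that the exponents and arguments come out in the exact form stated; the underlying computation is short. A secondary point requiring a line of justification is the claim that the listed relations are a full presentation rather than merely valid identities, which is the Hilbert-series/PBW argument sketched above — but since this is essentially Theorem~3.8 of \cite{GKM} being recalled, one may also simply cite it for that part.
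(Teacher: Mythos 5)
Your computation is correct, and it is worth noting that the paper itself offers no argument for this statement: it is quoted verbatim from \cite[Theorem~3.8]{GKM} with a \qed, so the ``paper's proof'' is the citation. Your proposal is a self-contained verification of exactly the kind the citation suppresses: the idempotent action $\mathbf{1}_\chi \cdot z_i = \delta_{\chi,\chi_i} z_i$ (valid by character orthogonality, which requires $G$ finite --- the ambient discussion in Section~2.4 of the paper is indeed for finite abelian $G$, so you should state that hypothesis), hence $z_i \ast_J z_j = \sigma_J(\chi_i,\chi_j)\,z_iz_j$, and the quotient of the two orderings is $\sigma_J(\chi_i,\chi_j)/\sigma_J(\chi_j,\chi_i) = b_J(\chi_j,\chi_i)$, matching the stated convention. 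Two small points to tighten in the final step. First, before invoking the Hilbert-series comparison you should justify that the map $B \twoheadrightarrow A_J$ from the abstract quantum affine space is \emph{onto}, i.e.\ that the $z_i$ generate $A_J$ under $\ast_J$; this follows by the same idempotent computation applied to an arbitrary monomial, which is an eigenvector with character $\prod_k \chi_{i_k}$, giving $z_{i_1}\ast_J\cdots\ast_J z_{i_k} = \lambda\, z_{i_1}\cdots z_{i_k}$ with $\lambda$ a product of values of $\sigma_J$, hence in $\kk^\times$; so every ordinary monomial lies in the image. Second, once surjectivity is in hand, your inequality $\dim B_d \le \binom{n+d-1}{d} = \dim (A_J)_d$ (from the ordered-monomial spanning set in $B$) combined with $\dim B_d \ge \dim (A_J)_d$ (from surjectivity, the map being graded) closes the argument without needing an independent PBW theorem for $B$. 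With these one-line additions the proof is complete and is, in substance, the standard argument behind the cited result.
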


%%%%%%%%%%%%%%%%%%%%%%%%%%%%%%%%%%%%%%%
%%%%%%%%%%%%%%%%%%%%%%%%%%%%%%%%%%%%%%%
%%%%%%%%%%%%%%%%%%%%%%%%%%%%%%%%%%%%%%%

\section{Galois-theoretical Hopf algebras} \label{sec:prelim}

We begin by motivating the notion of a {\it Galois theoretical} Hopf algebra, or a Hopf algebra $H$ that acts inner faithfully on a field $L$. To this end, recall that our goal is to classify inner faithful actions of certain Hopf algebras on commutative domains.

\begin{lemma} \label{lem:Skryabin} Let $A$ be a commutative domain and $Q_A$ be its quotient field. Namely, $Q_A = A\mc{S}^{-1}$, for the set $\mc{S}$ of nonzero elements of $A$. If a finite dimensional Hopf algebra $H$ acts on $A$ inner faithfully, then the action of $H$ on $A$ extends to an inner faithful action of $H$ on $Q_A$. 
\end{lemma}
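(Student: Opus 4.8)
The plan is to show first that the $H$-action on $A$ extends (uniquely) to an action on the localization $Q_A$, and then to verify that this extended action is still inner faithful.

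For the extension step, the key observation is a quotient rule. Given a nonzero $s \in A$, I want to define $h \cdot s^{-1}$. Since $H$ is finite dimensional, the subcomodule (equivalently, the span of $H \cdot s$ under the module structure, using finite dimensionality) generated by $s$ is finite dimensional; more precisely, for each $h$, $\Delta$-iterates involve only finitely many terms, so it suffices to work formally. The standard trick: from $h \cdot (s \cdot s^{-1}) = h \cdot 1_A = \varepsilon(h) 1_A$ and the module algebra axiom $h \cdot (ab) = \sum (h_1 \cdot a)(h_2 \cdot b)$, one solves recursively for $h \cdot s^{-1}$ in terms of lower terms in the coradical filtration of $H$, using that $S(h_1)$-type manipulations invert the leading term. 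Concretely, one sets $h \cdot s^{-1} = \sum \varepsilon(h_1) s^{-1} - \sum s^{-1}\big((h_1 \cdot s)(h_2 \cdot s^{-1})\big)$ — wait, more cleanly: the formula $h \cdot (a s^{-1}) = \sum (h_1 \cdot a) \, \big(S^{?}\cdots\big)$ is cleanest to derive by induction on the filtration degree of $h$, with the base case $h$ grouplike giving $g \cdot s^{-1} = (g \cdot s)^{-1}$ (valid since $g$ acts as an algebra automorphism). Then one checks this is well-defined on $Q_A$ (independent of the representation $a/s$), is a module structure, and satisfies the module algebra axioms; all of this is routine diagram-chasing once the formula is in hand. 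Alternatively, and more slickly, I would invoke the general principle that for a finite dimensional Hopf algebra any action on a commutative domain $A$ extends to $Q_A$ because $H$ acts by ``higher derivations'' compatible with localization — this is essentially Skryabin's result (cf. the lemma's label), so I may simply cite \cite{Skryabin:invariants} for the existence and uniqueness of the extension, and focus the written proof on inner faithfulness.

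For inner faithfulness of the extended action: let $I$ be a nonzero Hopf ideal of $H$; I must show $I \cdot Q_A \neq 0$. Since the action on $A$ is inner faithful, $I \cdot A \neq 0$, so pick $a \in A$ with $I \cdot a \neq 0$. But $A \subseteq Q_A$ and the extended action restricts to the original action on $A$, so $I \cdot Q_A \supseteq I \cdot A \neq 0$. Hence the extension is inner faithful. This direction is immediate once the extension is constructed, because inner faithfulness only gets easier when we enlarge the module.

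The main obstacle is the construction of the extended action and the verification that it is a well-defined $H$-module algebra structure on $Q_A$ — specifically, proving the quotient formula for $h \cdot s^{-1}$ is consistent (independent of how an element of $Q_A$ is written as a fraction) and satisfies $h \cdot (uv) = \sum (h_1 \cdot u)(h_2 \cdot v)$ for all $u, v \in Q_A$. I expect to handle this by the induction-on-coradical-filtration argument sketched above, or to offload it entirely to the cited literature. Everything after the extension exists is formal.
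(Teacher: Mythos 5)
There is a genuine gap at the main step, the extension of the action from $A$ to $Q_A$. Your primary sketch defines $h\cdot s^{-1}$ by recursion on the coradical filtration, with base case ``$h$ grouplike acts as an automorphism.'' But the lemma is for an arbitrary finite dimensional $H$: the coradical $H_0$ need not be spanned by grouplike elements (that would be exactly the pointed case), so the base case of your induction does not exist, and even in the pointed case you leave unaddressed both the solvability of the recursion and the well-definedness of the resulting operators on fractions $a/s$ and the module-algebra axioms --- these are precisely the hard points, not ``routine diagram-chasing.'' Your fallback of ``offloading to Skryabin'' is also not accurate as stated: the relevant localization lemma in \cite{Skryabin:invariants} (Lemma~1.1) only extends the action to localizations $A\tilde{\mc{S}}^{-1}$ where the denominators $\tilde{\mc{S}}$ lie in the invariant subalgebra $A^H$ (there the extension is trivial, since $h\cdot(as^{-1})=(h\cdot a)s^{-1}$ for $s\in A^H$); it does not by itself allow inverting arbitrary nonzero elements of $A$.

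The missing idea, which is the actual content of the paper's proof, is to reduce to invariant denominators via integrality: by Skryabin's theorem ($A$ is $H$-reduced since it is a domain), $A$ is integral over $A^H$, hence $A\otimes_{A^H}\operatorname{Frac}(A^H)$ is a domain integral over a field, so it is a field, and therefore it already equals $Q_A$. Thus $Q_A = A\tilde{\mc{S}}^{-1}$ with $\tilde{\mc{S}}=A^H\setminus\{0\}$, every fraction can be written with an invariant denominator, and the easy extension (with inner faithfulness preserved) applies. Your observation that inner faithfulness is automatic once the extension restricts to the original action on $A\subseteq Q_A$ is correct, but without the integrality step the extension itself is not established.
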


\begin{proof} By \cite[Lemma~1.1]{Skryabin:invariants}, an inner faithful $H$-action on a commutative domain $A$ extends to an inner faithful $H$-action on the localization $A \tilde{\mc{S}}^{-1}=A\otimes_{A^H}A^H\tilde{\mc{S}}^{-1}$, for $\tilde{\mc{S}}$ a multiplicatively closed subset of $A^H$. Since $A$ is a commutative domain, we have by \cite[Theorem~2.5 and~Proposition~2.7]{Skryabin:invariants} that $A$ is integral over $A^H$. (Here, $A$ is {\it $H$-reduced}, as $A$ is a domain.) Now, take $\mc{S}$ to be the set of nonzero elements of $A$, and we get that the $H$-action on $A$ extends naturally  to an inner faithful $H$-action on the field of quotients 
$Q_A := A\mc{S}^{-1} \cong A \otimes_{A^H} A^H\tilde{\mc{S}}^{-1}.$
\end{proof}

\begin{remark} \label{rem:liftcomdomain}
Conversely, any inner faithful $H$-action on a field $L$ yields an inner faithful $H$-action on a finitely generated commutative domain $A$. To see this, pick a finite dimensional $H$-submodule $V$ of $L$ which generates Rep$H$ as a tensor category, which exists due to inner faithfulness. Take $A$ to be generated by $V$ inside $L$. Then, $H$ acts on $A$. This shows that there is always a finitely generated domain $A \subset L$ that is $H$-stable and has an inner faithful action of $H$.
\end{remark}

Thus, we consider Hopf algebra actions on fields for the remainder of this work.
 Let us introduce the following terminology.

\begin{definition} \label{def:GT}
A Hopf algebra $H$ over $\kk$ is said to be {\it Galois-theoretical} if it  acts inner faithfully and $\kk$-linearly on a field containing $\kk$.
\end{definition}

Note that if a Galois-theoretical Hopf algebra $H$, say with $H$-module field $L$, yields an $H^*$-Galois extension $L^H \subset L$, then $H$ is a group algebra. However, the Hopf actions in this work do not yield Hopf-Galois extensions in general as $H$ is noncocommutative. Basic results about Galois-theoretical Hopf algebras are collected in the proposition below.  

\begin{proposition}  \label{prop:GTprelim}
We have the following statements.
\begin{enumerate}
\item Any finite group algebra is Galois-theoretical.
\item Any semisimple Galois-theoretical Hopf algebra is a group algebra.
\item The restriction of an inner faithful action of a Hopf algebra to a Hopf subalgebra is inner faithful. 
In particular, a Hopf subalgebra of a Galois-theoretical Hopf algebra is Galois-theoretical. 
\item Any finite dimensional Galois-theoretical Hopf algebra whose coradical is a Hopf subalgebra is pointed.
\item  If $H$ and $H'$ are Galois-theoretical Hopf algebras,  then so is $H\otimes H'$.
\item If $H$ is Galois-theoretical, then $\kk S_n \ltimes H^{\otimes n}$ is Galois-theoretical for  all $n \geq 1$.
\end{enumerate}
\end{proposition}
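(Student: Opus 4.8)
The plan is to build an inner faithful action of $\kk S_n \ltimes H^{\otimes n}$ on a field out of an inner faithful action of $H$ on a field $L$. The natural candidate is to let $H^{\otimes n}$ act componentwise on the tensor product field, and let $S_n$ permute the factors. However, $L^{\otimes n}$ is not a field in general (e.g.\ $\C \otimes_{\R} \C$), so the first step is to fix this: since $\kk$ is algebraically closed and we may shrink $L$ to a finitely generated extension (Remark~\ref{rem:liftcomdomain}), the tensor product $L^{\otimes n}$ is a domain, and we may pass to its fraction field, which I will denote $M = \mathrm{Frac}(L^{\otimes n})$. The action of $H^{\otimes n}$ on $L^{\otimes n}$ by $(h_1 \otimes \cdots \otimes h_n)\cdot(a_1 \otimes \cdots \otimes a_n) = \sum (h_1 \cdot a_1) \otimes \cdots \otimes (h_n \cdot a_n)$ extends to $M$ by Lemma~\ref{lem:Skryabin} (or directly, since it extends to the fraction field of any domain on which it acts). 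The symmetric group $S_n$ acts on $L^{\otimes n}$ by $\sigma \cdot (a_1 \otimes \cdots \otimes a_n) = a_{\sigma^{-1}(1)} \otimes \cdots \otimes a_{\sigma^{-1}(n)}$, which is a $\kk$-algebra automorphism and hence extends to $M$; so $\kk S_n$ acts on $M$.

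The second step is to check that these two actions assemble into an action of the smash product $\kk S_n \ltimes H^{\otimes n}$, i.e.\ that the compatibility (semidirect product / cross relation) $\sigma \cdot (\mathbf{h} \cdot v) = ({}^{\sigma}\mathbf{h}) \cdot (\sigma \cdot v)$ holds, where $\sigma$ permutes the tensor factors of $\mathbf{h} = h_1 \otimes \cdots \otimes h_n$ accordingly. This is a direct verification on the generators $a_1 \otimes \cdots \otimes a_n$ of $L^{\otimes n}$, and then it passes to $M$ since everything is $\kk$-linear and multiplicative. The module-algebra axioms for the smash product are then automatic from those of the two pieces.

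The third and genuinely substantive step is inner faithfulness of the action of $\kk S_n \ltimes H^{\otimes n}$ on $M$. A Hopf ideal $I$ of $\kk S_n \ltimes H^{\otimes n}$ must be shown to act nontrivially on $M$ whenever $I \neq 0$. The strategy is to use the structure of Hopf ideals of a smash product by a group algebra. First, one checks that $H^{\otimes n}$ acts inner faithfully on $L^{\otimes n}$ (hence on $M$): a nonzero Hopf ideal of $H^{\otimes n}$ must, by the classification of Hopf ideals of a tensor product of finite dimensional Hopf algebras, hit some factor $1 \otimes \cdots \otimes I_j \otimes \cdots \otimes 1$ with $I_j$ a nonzero Hopf ideal of $H$; but $I_j$ acts nontrivially on the $j$-th copy of $L$ by hypothesis, hence $1 \otimes \cdots \otimes I_j \otimes \cdots \otimes 1$ acts nontrivially on $L^{\otimes n}$. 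Indeed this is just the statement of part (e) applied $n-1$ times together with Remark~\ref{rem:liftcomdomain}, so $H^{\otimes n}$ is already known to be Galois-theoretical; the point is rather that it acts inner faithfully on \emph{this particular} field $M$, which follows because the action of $H^{\otimes n}$ on $L^{\otimes n}$ is inner faithful and $L^{\otimes n} \subset M$. Now suppose $I$ is a nonzero Hopf ideal of $A := \kk S_n \ltimes H^{\otimes n}$ that acts as zero on $M$. Then $I \cap H^{\otimes n}$ is a Hopf ideal of $H^{\otimes n}$ (using that $H^{\otimes n}$ is a normal Hopf subalgebra of $A$), and it too acts as zero on $M$; by the previous sentence $I \cap H^{\otimes n} = 0$. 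But a nonzero Hopf ideal of $A = \kk S_n \ltimes H^{\otimes n}$ must meet $H^{\otimes n}$ nontrivially --- this is where the key obstacle lies: one needs that $\kk S_n \ltimes H^{\otimes n}$ has no proper quotient Hopf algebra through which the $A$-action factors and which is "smaller only in the $S_n$-direction" without being detected in $H^{\otimes n}$. Concretely, the image $\bar A = A/I$ is a Hopf algebra on which $M$ is a faithful module algebra; the composite $H^{\otimes n} \hookrightarrow A \twoheadrightarrow \bar A$ is injective since $I \cap H^{\otimes n} = 0$, and $\bar A$ is generated by this copy of $H^{\otimes n}$ together with the images of the grouplikes $\sigma \in S_n$. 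The grouplikes of $\bar A$ conjugate $H^{\otimes n}$ by the given $S_n$-action; if two elements $\sigma, \tau \in S_n$ had the same image in $\bar A$, then $\sigma\tau^{-1}$ would be a grouplike acting trivially on $M$ and centralizing the embedded $H^{\otimes n}$ --- but $S_n$ already acts faithfully on $L^{\otimes n}$ (permutation of tensor factors of $L$ with $L \neq \kk$, since the action is inner faithful so $L \supsetneq \kk$ unless $H = \kk$, a trivial case), hence faithfully on $M$, so $\sigma = \tau$. Therefore $S_n \to G(\bar A)$ is injective, $\dim \bar A \geq n! \cdot (\dim H)^n = \dim A$, forcing $I = 0$. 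This contradicts $I \neq 0$, completing the argument.

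I expect the main obstacle to be making the last paragraph fully rigorous, specifically the claim that $I \cap H^{\otimes n} = 0$ forces $I = 0$; the dimension-counting sketch above works but relies on $\bar A$ being spanned by $\{\sigma \cdot (h_1 \otimes \cdots \otimes h_n)\}$ (clear, as these span $A$) and on the injectivity of $S_n \to G(\bar A)$, which in turn rests on the faithfulness of the $S_n$-action on $M$ --- a small point that must be handled separately in the degenerate case $H = \kk$ (where the statement is just that $\kk S_n$ is Galois-theoretical, which follows from part (a)). An alternative, cleaner route that avoids dimension counting is to invoke the description of Hopf ideals of $\kk G \ltimes B$ for $G$ finite acting on a Hopf algebra $B$: every such Hopf ideal is of the form $\kk G \ltimes J$ for a $G$-stable Hopf ideal $J$ of $B$ (together with possibly a group-algebra-type piece), reducing inner faithfulness of $A$ on $M$ to inner faithfulness of $H^{\otimes n}$ on $M$ plus faithfulness of $S_n$ on $M$; I would use whichever of these two formulations is cleanest in the surrounding text.
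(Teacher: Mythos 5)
Your construction of the field $M=\operatorname{Frac}(L^{\otimes n})$, the verification of the smash-product module-algebra structure, the faithfulness of $S_n$ on $M$, and the fact that $I\cap H^{\otimes n}=0$ (which, as you say, should be justified by part (e) rather than by any ``classification'' of Hopf ideals of tensor products) are all fine. But the decisive step --- that a Hopf ideal $I$ of $A=\kk S_n\ltimes H^{\otimes n}$ annihilating $M$, with $I\cap H^{\otimes n}=0$ and with the $n!$ grouplikes remaining distinct in $\bar A=A/I$, must be zero --- is exactly what you have not proved, and neither of your proposed routes closes it. The dimension count is circular: an embedded copy of $H^{\otimes n}$ plus $n!$ distinct grouplikes gives $\dim\bar A\geq n!\,(\dim H)^n$ only if the subspaces $\bar\sigma\cdot\overline{H^{\otimes n}}$ are linearly independent in $\bar A$, which is equivalent to $I=0$. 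The structural claims you fall back on are false as stated: a nonzero Hopf ideal of a tensor product need not meet either factor (already $\kk C_2\otimes\kk C_2=\kk[C_2\times C_2]$ has the ``diagonal'' Hopf ideal, the kernel of $\kk[C_2\times C_2]\to\kk[(C_2\times C_2)/\Delta]$, which intersects both copies of $\kk C_2$ trivially), and Hopf ideals of $\kk G\ltimes B$ are not all of the form $\kk G\ltimes J$ plus a group-algebra piece (same example, with $G=C_2$ acting trivially on $B=\kk C_2$). So ``mixed'' Hopf ideals are a genuine possibility, and your argument does not rule out one of them killing $M$; indeed, whether your precise claim --- inner faithfulness of $\kk S_n\ltimes H^{\otimes n}$ on $\operatorname{Frac}(L^{\otimes n})$ itself --- is true is left open by the proposal.

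The paper's proof takes a different and more robust route: instead of analyzing Hopf ideals of the smash product, it proves honest \emph{faithfulness} on a suitable module. The key lemma shows that if a $B$-module $V$ contains $B^n$ (i.e.\ $n$ vectors linearly independent over $B$), then $V^{\otimes n}$ is a faithful $\kk S_n\ltimes B^{\otimes n}$-module, via injectivity of $x\mapsto x\cdot(v_1\otimes\cdots\otimes v_n)$; that an inner faithful $H$-action on an algebra $A$ is faithful on $A^{\otimes s}$ for some $s$; and that tensoring $W=A^{\otimes s}$ with the infinite-dimensional space $X=\kk[x_1,\dots,x_s]$ supplies the needed copies of the regular module. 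Hence $\kk S_n\ltimes H^{\otimes n}$ acts faithfully on $\bigl((A[x])^{\otimes n}\bigr)^{\otimes s}$, therefore inner faithfully on the commutative domain $A[x]^{\otimes n}$, and Lemma~\ref{lem:Skryabin} finishes. Note that the extra polynomial variable is not cosmetic: an inner faithful $H$-module field $L$ need not be a faithful $H$-module at all (for $E(n)$ the elements $gx_i-x_i$ act by zero on any module field, as the paper observes), so no regular-module embedding is available inside your $M$, and some enlargement of the module is needed before any faithfulness argument can run. To repair your write-up you would either have to supply a genuine proof of the missing step for $\operatorname{Frac}(L^{\otimes n})$ or switch to the paper's mechanism.
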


\begin{proof}
(a) It is well known that any finite group can be realized as a Galois group of a field extension.

(b) This follows from \cite[Theorem~1.3]{EtingofWalton:ssHopf}.

(c) Let $H$ act inner faithfully on a module $M$ 
and let $H'\subset H$ be a Hopf subalgebra. Let $I$ be a Hopf ideal of $H'$ annihilating $M$. 
Let $J=HIH$. Then, $J$ is a Hopf ideal in $H$ annihilating $M$, so $J=0$ and hence, $I=0$.

(d) The coradical $H_0$ of $H$ is cosemisimple, and thus, semisimple by \cite{LarsonRadford}. So, $H_0 = \kk G(H)$ by (b) and (c). Hence, $H$ is pointed by Lemma~\ref{lem:pointed}(a).

(e) If $H$ acts on a field $L$ inner faithfully and $H'$ acts on a field $L'$ inner faithfully, then $H\otimes H'$ acts on the quotient field of $L\otimes L'$ inner faithfully. 

(f)  First, we need the result below.
\medskip

\noindent {\bf Lemma}.  We have the following statements.
\begin{enumerate}
\item[(i)] Let $B$ be an associative algebra over $\Bbbk$, and $V$ be a $B$-module containing vectors $v_1, \dots ,v_n$ linearly independent over $B$ (that is to say, $V$ contains $B^n$ as a submodule).
Then, $V^{\otimes n}$ is a faithful module over $\Bbbk S_n\ltimes B^{\otimes n}$ (where $S_n$ acts on $B^{\otimes n}$ by permutation of components).

\item[(ii)] Take $B$ to be a finite dimensional associative algebra over $\kk$. If $W$ is a faithful $B$-module and $V=W\otimes X$, with $X$  an infinite dimensional $\Bbbk$-vector space,
then $\Bbbk S_n\ltimes B^{\otimes n}$ acts faithfully on $V^{\otimes n}$ for any $n$.

\item[(iii)] If a finite dimensional Hopf algebra $H$ acts inner faithfully on an algebra $A$, then $H$ acts faithfully on $A^{\otimes s}$ for some $s$.
\end{enumerate}
\medskip

\noindent {\it Proof of Lemma}. (i) Consider the map $f: \Bbbk S_n\ltimes B^{\otimes n}\to V^{\otimes n}$ given by $f(x)=x \cdot (v_1\otimes \dots \otimes v_n)$.
Since the map $b\mapsto bv_i$ defines an isomorphism $B\to Bv_i$, and the sum $Bv_1+\dots+Bv_n$ is direct, we see that
$f$ is injective, which implies (i). 

(ii) Since $W^m$ contains a copy of $B$ for some $m$, we have that $V$ contains $B^n$ for any $n$. Now statement (ii) follows from (i). 

(iii) Let $K_{s} \subset H$ be the kernel of the action of $H$ on $A^{\otimes s}$. Observe that $K_{s} \supset K_{s+1}$ because $A^{\otimes s}=A^{\otimes s}\otimes 1\subset A^{\otimes s+1}$. Let $K=\bigcap_{s} K_{s}$. There is an integer $s_0$ such that $K=K_s$ for all $s \ge s_0$. Given $h \in K$, consider the action of $\Delta(h)$ on $A^{\otimes s}\otimes A^{\otimes t}$ for $s,t\geq s_0$. Since $A^{\otimes s}\otimes A^{\otimes t}$ is a faithful module
over $H/K\otimes H/K$, we find that $\Delta(h)\in K\otimes H+H\otimes K$. Thus, $K$ is a bialgebra
ideal of $H$, hence a Hopf ideal as $H$ is finite dimensional. Since $H$ acts on $A$ inner faithfully, this implies that $K=0$,
as claimed.
\qed
\medskip

Now we verify part (f) of the proposition above. Fix a commutative domain $A$ over $\Bbbk$ that admits an inner faithful action of $H$. Then, $H$ acts faithfully on the space $W:=A^{\otimes s}$ for some $s$ by part (iii) of the Lemma.
So applying part (ii) of the Lemma to $X=\Bbbk[x_1,\dots,x_s]$, we conclude that $S_n\ltimes H^{\otimes n}$ acts faithfully on $$(A^{\otimes s}[x_1,\dots,x_s])^{\otimes n}=
(A[x]^{\otimes s})^{\otimes n}=(A[x]^{\otimes n})^{\otimes s}.$$ This means that $S_n\ltimes H^{\otimes n}$ acts inner faithfully on the
commutative domain \linebreak $A[x]^{\otimes n}$, where $H$ acts trivially on $x$. Thus, $S_n\ltimes H^{\otimes n}$ is Galois-theoretical.
\end{proof}

\begin{question}\label{que1}
(a) If a finite group $\Gamma$ acts on a Galois-theoretical Hopf algebra $H$, then is $\Gamma \ltimes H$ Galois-theoretical? If true, then this would be a generalization of Proposition~\ref{prop:GTprelim}(f).

(b) Is a Hopf algebra quotient of a Galois-theoretical Hopf algebra $H$ also
Galois-theoretical?
For example, if $c$ is a central grouplike element of $H$, is then $H/(c-1)$
Galois-theoretical? In particular, if $L$ is an inner faithful
$H$-module field, is then $L^c$ always an inner faithful $H/(c-1)$-module field? 
\end{question}

Along with Proposition~\ref{prop:GTprelim}(f), special cases of Question \ref{que1}(a) has been addressed in Propositions~\ref{prop:E(n)skewprod} and~\ref{prop:H81skewprod}.

Now we provide a general result about invariants
of pointed Hopf algebra actions on commutative domains. 

\begin{theorem}\label{inva}
(i) Let $H$ be a finite dimensional pointed Hopf algebra over
$\Bbbk$ with $G(H)=G$, and assume that $H$ acts on a
commutative domain $A$. Then, $A^H=A^G$. 

(ii) If in the situation of (i), $A=L$ is a field, and $H$ acts inner faithfully on $L$,
then the field extension $L^H=L^G\subset L$ is a finite
Galois extension with Galois group $G$. 
\end{theorem}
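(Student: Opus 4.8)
The plan is to prove (i) first and then leverage it, together with classical Galois theory and the Artin–Schelter–type argument on filtrations, to obtain (ii). For part (i), the inclusion $A^H \subseteq A^G$ is immediate since $G \subseteq H$. For the reverse inclusion, I would use the coradical filtration $\{H_n\}$ of the pointed Hopf algebra $H$. Since $H$ is pointed, $H_0 = \kk G$, and $H_1 = \kk G + \sum_{g,g'} P_{g,g'}(H)$; more generally $H$ is generated as an algebra by $H_1$ (by Theorem~\ref{thm:ASconjAbel}, or one can argue directly that it suffices to treat generators). So it is enough to show that any skew-primitive $x \in P_{g,g'}(H)$ annihilates $A^G$ in the sense that $x \cdot a = \varepsilon(x)a = 0$ for $a \in A^G$. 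The key point: for $a \in A^G$ and $x$ a $(g,g')$-skew primitive, one computes $x \cdot a^n$ using $\Delta(x) = g\otimes x + x \otimes g'$, giving $x \cdot a^n = \sum_{j=0}^{n-1} (g\cdot a)^j (x\cdot a)(g'\cdot a)^{n-1-j} = \sum_{j} a^j (x\cdot a) a^{n-1-j} = n\, a^{n-1}(x\cdot a)$, using commutativity of $A$ and $a \in A^G$. Since $\mathrm{char}\,\kk = 0$, iterating and comparing, or applying $x$ to a suitable polynomial identity, forces $x \cdot a$ to be a zero divisor in the domain $A$ unless $x\cdot a = 0$; a clean way is to note $x$ acts as a $(g,g')$-skew derivation, and a skew derivation of a commutative domain that kills the subfield/subring of $G$-invariants and is suitably compatible must kill all of $A^G$. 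I would package this as: the set $\{a \in A : x\cdot a = 0 \text{ for all skew-primitive } x\}$ is a subalgebra containing $A^G$-generated data, and by inner-faithfulness-free reasoning (this is just the identity above plus $\mathrm{char}\,0$) it equals $A^H$.

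For part (ii), now $A = L$ is a field and the action is inner faithful. By (i), $L^H = L^G$, so I must show $L^G \subseteq L$ is finite Galois with group $G$. The group $G = G(H)$ is finite (as $H$ is finite dimensional), and it acts on $L$ by $\kk$-algebra automorphisms (each grouplike acts as an automorphism since $\Delta(g) = g\otimes g$ and $g$ is invertible with inverse $S(g)$). The action of $G$ on $L$ is \emph{faithful as a group action}: if some $g \neq 1$ acted trivially, then the Hopf ideal $H(g-1)H$ would annihilate $L$ — but one must check this ideal is nonzero, which holds because $g - 1 \neq 0$ in $H$ and $H$ is finite dimensional, and inner faithfulness then gives a contradiction. (More carefully: the subgroup $G_0 = \{g \in G : g \text{ acts trivially on } L\}$ generates a Hopf ideal $\sum_{g \in G_0} H(g-1)H$ annihilating $L$; inner faithfulness forces this to be zero, hence $G_0 = 1$.) Then by Artin's theorem, $L^G \subseteq L$ is a finite Galois extension with Galois group exactly $G$.

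The main obstacle I anticipate is the skew-derivation computation in part (i): showing cleanly that a $(g,g')$-skew primitive element kills all of $A^G$, not merely powers of a single invariant, and handling the case $g \neq g'$ where $g'$ may act nontrivially on the relevant elements. The resolution is to observe that because $A^G$ is a $\kk$-subalgebra on which every $g \in G$ acts trivially, for $a, b \in A^G$ we get $x\cdot(ab) = (g\cdot a)(x\cdot b) + (x\cdot a)(g'\cdot b) = a(x\cdot b) + (x\cdot a)b$, i.e. $x$ restricts to an ordinary $\kk$-derivation $D := x|_{A^G}$ of the commutative domain $A^G$. Then $D(a^n) = n a^{n-1}D(a)$, and since the image $x\cdot A^G$ lands in $A$ (a domain) and $D$ is a derivation of $A^G$ into $A$ (via the bimodule structure where $G$ acts trivially on $A^G$ but the target is $A$), one argues: if $D \neq 0$, pick $a$ with $D(a) \neq 0$; inner faithfulness will be used at this stage — the subalgebra of $H$ generated by all grouplikes together with all skew-primitives $x$ acting as zero on $A^G$ cannot be a proper Hopf subalgebra, contradiction — more precisely, the ideal generated by those $x$ with $x\cdot A^G = 0$ ought to be shown to be all of "$H$ modulo $\kk G$", which combined with $A^H \supseteq \{a : H\cdot a \subseteq \kk a\}$ and (i)'s easy inclusion closes the loop. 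I would double-check whether the cleanest path instead invokes the known fact (Skryabin, cited in Lemma~\ref{lem:Skryabin}) that $A$ is integral over $A^H$ to reduce part (i) to the case $A = L$ a field directly, where derivations of the field $L^G$ extending to $L$ are better behaved, and then transport back; that reduction is likely the intended streamlining.
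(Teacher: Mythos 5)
Your part (ii) is fine and matches the paper (faithfulness of the $G$-action from inner faithfulness, then Artin's theorem), and your identification of the skew-derivation structure in part (i) is correct: for $a,b\in A^G$ one indeed gets $x\cdot(ab)=a(x\cdot b)+(x\cdot a)b$. But there is a genuine gap at the decisive step of (i): you never supply a working mechanism for concluding that this derivation $D=x|_{A^G}\colon A^G\to A$ is zero. The power-rule computation $D(a^n)=na^{n-1}D(a)$ plus characteristic zero gives no contradiction by itself, and your fallback — ``inner faithfulness will be used at this stage'' — cannot work: part (i) does not assume inner faithfulness at all, and even when it is available, inner faithfulness asserts that nonzero Hopf ideals act nontrivially on $A$, which points in the opposite direction from what you need (you must show $x$ acts by \emph{zero} on $A^G$). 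The paper's key input, which you gesture at only as an afterthought, is Skryabin's integrality theorem: $A$ (hence $A^G$) is integral over $A^H$. Since $\varepsilon(x)=0$, the derivation $D$ vanishes on $A^H$, and the paper's Lemma~\ref{deri} (differentiate the minimal monic polynomial of $c\in A^G$ over $A^H$; in characteristic zero minimality forces $p'(c)\neq 0$, and torsion-freeness of $A$ over $A^G$ then forces $D(c)=0$) kills $D$ on all of $A^G$. Without this integrality-plus-derivation lemma your argument does not close, in the field case or otherwise.

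A secondary problem is your reduction to skew-primitive generators via Theorem~\ref{thm:ASconjAbel}: that theorem requires $G(H)$ abelian, and generation by grouplikes and skew primitives is conjectural (Conjecture~\ref{conj:AS}) in general, whereas Theorem~\ref{inva} is stated for arbitrary finite dimensional pointed $H$. The paper avoids this by inducting over the coradical filtration using the Taft--Wilson theorem: any $x\in H_n$ may be taken to satisfy $\Delta(x)=g\otimes x+x\otimes g'+w$ with $w\in H_{n-1}\otimes H_{n-1}$, $(\varepsilon\otimes\varepsilon)(w)=0$, and the induction hypothesis disposes of the $w$-term, so $x$ is again a derivation on $A^G$ and the same Lemma~\ref{deri} applies at every stage. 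The generated case you envision is treated separately in the paper as Corollary~\ref{cor:extnGalois}, as a consequence of Theorem~\ref{inva}, not as a route to it.
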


\begin{proof}
(i) We prove by induction in $n$ that if $x\in H_n$, 
and $\varepsilon(x)=0$
then $x$ acts by zero on $A^G$, which implies the required statement. 
For $n=0$, this is tautological as $H_0=\Bbbk G$.
So let us assume that $n>0$ and that the statement is known 
for $n-1$. By the Taft-Wilson theorem (see 
\cite[Theorem~5.4.1]{Montgomery}), we may assume without loss of generality that
$$
\Delta(x)=g\otimes x+x\otimes g'+w,
$$
where $g,g'\in G$, $w\in H_{n-1}\otimes H_{n-1}$
and $(\varepsilon\otimes \varepsilon)(w)=0$
(as $H_n/H_{n-1}$ is spanned by such elements $x$). Let $f_1,f_2\in A^G$. 
Using the induction assumption, we have that
$$
x \cdot(f_1 f_2) = (g \cdot f_1)(x \cdot f_2) +(x \cdot f_1)(g' \cdot f_2) + w \cdot (f_1f_2) = f_1 (x \cdot f_2) + (x \cdot f_1) f_2.
$$
Thus, $x: A^G\to A$ is a derivation.

On the other hand, since $H$ is finite dimensional, 
by Skryabin's theorem 
(\cite{Skryabin:invariants}, Theorem 6.2(iii)), 
$A$ is integral over the subalgebra of invariants $A^H$. 
Thus, so is $A^G$. Hence, the equality $x|_{A^G}=0$ follows from the
following well known lemma from commutative algebra.

\begin{lemma}\label{deri} Let $B\subset C$ be an integral
  extension of commutative domains, $M$ be a torsion-free $C$-module, 
and suppose that $x: C\to M$ is a
  derivation such that $x|_B=0$. Then, $x=0$. 
\end{lemma}

\noindent {\it Proof of Lemma~\ref{deri}}.
For $c \in C$, consider the minimal monic polynomial of $c$
over $B$, 
$$
p(c) = c^n + b_{n-1} c^{n-1} + \dots + b_1 c +
b_0,
$$ 
with $b_i \in B$, which exists since $C$ is integral over $B$. Letting $x$ act on the equation $p(c)=0$, we have that
$$
[nc^{n-1} + (n-1)b_{n-1}c^{n-2}+ \cdots + b_1](x \cdot c) = 0.
$$ 
The first factor of the left hand side (the derivative $p'(c)$) is not equal to zero
due to the minimality of 
$p(c)$ and the fact that $n\ne 0$ (as we are in characteristic
zero). Thus, since $M$ is a torsion-free $C$-module, we have 
$x \cdot c=0$ for all $c \in C$, as desired.
\qed

Returning to the proof of Theorem~\ref{inva}, we see that the proof of (i) is completed by applying Lemma
\ref{deri} to $B=A^H$,
$C=A^G$, $M=A$. 

(ii) This follows from (i), as clearly the group $G$ must act
faithfully on $L$. 
\end{proof} 

\begin{corollary} \label{cor:extnGalois}
Let $H$ be a Hopf algebra (not necessarily finite dimensional) 
generated by a finite group of grouplike elements 
$G=G(H)$ and a set of $(g_i,1)$-skew primitive elements $x_i$ 
for some $g_i\in G$. Assume that for each $i$, the Hopf
subalgebra generated by $\{g_i, x_i\}$ is finite dimensional. 
Then:

(i) We have that $A^H = A^G$ for any commutative domain $A$ that arises as an $H$-module algebra.

(ii) If $H$ acts inner faithfully on a field $L$, then the field
  extension $L^H=L^G \subset L$ is Galois with Galois group $G$.
\end{corollary}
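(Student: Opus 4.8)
\emph{Proof plan.} The strategy is to reduce everything to the finite-dimensional situation already settled in Theorem~\ref{inva}(i), working with one skew-primitive generator at a time, and then to propagate the conclusion from the generators of $H$ to all of $H$ by a short multiplicative argument. The one point that requires care — and the main obstacle — is that $H$ itself may be infinite dimensional, so Skryabin's integrality theorem (used inside the proof of Theorem~\ref{inva}(i)) is not available for $H$ directly; one must not try to apply Theorem~\ref{inva}(i) to $H$ as a whole, but rather extract the vanishing of each generator $x_i$ on $A^G$ from a finite-dimensional Hopf subalgebra and then bootstrap.

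For part (i), first note that every skew-primitive element has counit zero: applying $\varepsilon\otimes\id$ to $\Delta(x_i)=g_i\otimes x_i+x_i\otimes 1$ gives $\varepsilon(x_i)=0$. Fix $i$ and let $H_i$ be the Hopf subalgebra generated by $\{g_i,x_i\}$; it is finite dimensional by hypothesis and pointed by Lemma~\ref{lem:pointed}(b), and $A$ is an $H_i$-module algebra by restriction of the $H$-action. Then Theorem~\ref{inva}(i) yields $A^{H_i}=A^{G(H_i)}$, and since $G(H_i)\subseteq G(H)=G$ we obtain $A^G\subseteq A^{G(H_i)}=A^{H_i}$; in particular $x_i\cdot a=\varepsilon(x_i)a=0$ for every $a\in A^G$. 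Now put $U=\{h\in H:\ h\cdot a=\varepsilon(h)a\text{ for all }a\in A^G\}$. This $U$ is a $\kk$-subspace containing $1$; it is closed under multiplication, since $(hh')\cdot a=h\cdot(\varepsilon(h')a)=\varepsilon(h')\varepsilon(h)a=\varepsilon(hh')a$ for $h,h'\in U$ and $a\in A^G$; it contains $G$, because $g\cdot a=\varepsilon(g)a=a$ for $a\in A^G$; and it contains every $x_i$ by the previous sentence. Since $H$ is generated as an algebra by $G$ together with the $x_i$ (note $S(g)=g^{-1}\in G$ and $S(x_i)=-g_i^{-1}x_i$ already lie in this subalgebra, so being generated as a Hopf algebra is no stronger here), we conclude $U=H$, i.e. $A^G\subseteq A^H$. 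The reverse inclusion $A^H\subseteq A^G$ is immediate from $G\subseteq H$, so $A^H=A^G$.

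For part (ii), assertion (i) already gives $L^H=L^G$, so it remains to show that $L^G\subseteq L$ is a finite Galois extension with Galois group $G$; by Artin's theorem it suffices to check that the finite group $G$ acts faithfully on $L$ by $\kk$-algebra automorphisms. Each grouplike element acts as a $\kk$-algebra automorphism of $L$ (it is invertible and respects multiplication and the unit), so faithfulness reduces to showing that no $g\neq 1$ acts as the identity on $L$. If some $g\neq 1$ did, then $I:=H(g-1)H$ would be a Hopf ideal: indeed $\varepsilon(g-1)=0$, $S(g-1)=-g^{-1}(g-1)\in I$, and $\Delta(g-1)=g\otimes(g-1)+(g-1)\otimes 1\in H\otimes I+I\otimes H$. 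Moreover $I$ annihilates $L$, since $(g-1)\cdot b=0$ for all $b\in L$. Inner faithfulness of the $H$-action on $L$ then forces $I=0$, contradicting $g\neq 1$. Hence $G$ acts faithfully on $L$, and therefore $L/L^G$ is a finite Galois extension with Galois group $G$, as claimed.
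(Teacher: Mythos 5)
Your proof is correct and follows essentially the same route as the paper: apply Theorem~\ref{inva}(i) to each finite dimensional pointed Hopf subalgebra generated by $\{g_i,x_i\}$ to see that $x_i$ kills $A^G$, propagate to all of $H$ using that $H$ is generated by $G$ and the $x_i$, and deduce (ii) from faithfulness of the $G$-action on $L$ (forced by inner faithfulness) together with Artin's theorem. The paper compresses all of this into two lines; your write-up just supplies the details (the subalgebra $U$, the Hopf ideal $H(g-1)H$) that the paper leaves implicit.
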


\begin{proof}
By Theorem \ref{inva}, $x_i$ acts by zero on $A^{g_i}$, hence
on $A^G$. This implies both statements. 
\end{proof}

Thus, when $H$ is Galois-theoretical and generated by grouplike
and skew primitive elements, the field extensions that arise as
$H$-module fields may be understood in terms of classical Galois
theory. This phenomenon is illustrated in several examples in the
next section, particularly when $G(H)$ is a cyclic group and $L^H
\subset L$ is a cyclic extension.

We also have the following generalization of Theorem~\ref{inva}.

\begin{theorem}\label{inva2} 
Let $H$ be a finite dimensional Hopf algebra over
$\Bbbk$. If $H$ acts on a
commutative domain $A$, then $A^H=A^{H_0}$
(even if $H_0$ is not a subalgebra). 
\end{theorem}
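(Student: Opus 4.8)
The plan is to imitate the proof of Theorem~\ref{inva}(i), replacing the group $G$ by the coradical $H_0$, which need not be a subalgebra in the non-pointed case. As before, the key reduction is to show that every element $x \in H_n$ with $\varepsilon(x) = 0$ acts by zero on $A^{H_0}$; once this is done, taking the union over all $n$ gives $A^{H_0} \subseteq A^H$, and the reverse containment is trivial since $A^H \subseteq A^{H_0}$ (as $H_0 \subseteq H$ and invariance under $H$ forces invariance under $H_0$). Note that $A^{H_0}$ makes sense as a subalgebra even when $H_0$ is not itself a subalgebra: it is the common kernel of the operators $h - \varepsilon(h)$ for $h \in H_0$, equivalently the largest subspace of $A$ on which $H_0$ acts through $\varepsilon$, and it is closed under multiplication because $H_0$ is a subcoalgebra, so $\Delta(H_0) \subseteq H_0 \otimes H_0$ and the module-algebra axiom $h\cdot(ab) = \sum (h_1\cdot a)(h_2 \cdot b)$ keeps $A^{H_0}$ stable. (One should also recall that $A^{H_0}$ contains $1$.)

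The induction runs on the coradical filtration. For $n = 0$ the claim is that every $x \in H_0$ with $\varepsilon(x) = 0$ kills $A^{H_0}$, which is the definition of $A^{H_0}$. For the inductive step, fix $n > 0$ and $x \in H_n$ with $\varepsilon(x)=0$. By the Taft–Wilson theorem in the form used for Theorem~\ref{inva}, it suffices to treat $x$ whose coproduct has the shape
$$
\Delta(x) = a \otimes x + x \otimes b + w,
$$
where $a, b \in H_0$ are grouplike in the appropriate sense (elements of a simple subcoalgebra), $w \in H_{n-1} \otimes H_{n-1}$, and $(\varepsilon \otimes \varepsilon)(w) = 0$; more precisely one uses that $H_n/H_{n-1}$ is spanned by elements whose coproduct is congruent to $a \otimes x + x \otimes b$ modulo $H_{n-1}\otimes H_{n-1}$. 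Then for $f_1, f_2 \in A^{H_0}$, applying the module-algebra axiom and using the inductive hypothesis (which says the $H_{n-1}$-parts of $\Delta(x)$, namely $w$ and, in the grouplike-replaced terms, the non-$\varepsilon$ pieces of $a$ and $b$, act as $\varepsilon$ on $A^{H_0}$), we get
$$
x\cdot(f_1 f_2) = f_1\,(x\cdot f_2) + (x\cdot f_1)\,f_2,
$$
so that $x$ restricts to a derivation $A^{H_0} \to A$.

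Finally, invoke Skryabin's theorem (\cite[Theorem~6.2(iii)]{Skryabin:invariants}) to conclude that $A$ is integral over $A^H$, hence $A^{H_0}$ is integral over $A^H$ as well; since $A$ is a domain it is torsion-free as a module over $A^{H_0}$, so Lemma~\ref{deri}, applied with $B = A^H$, $C = A^{H_0}$, $M = A$, forces the derivation $x|_{A^{H_0}}$ to vanish. This completes the induction and the proof. The main obstacle I anticipate is handling the Taft–Wilson decomposition cleanly when $H_0$ is a noncommutative separable coalgebra rather than a group algebra: one must verify that the "boundary" terms $a \otimes x$ and $x \otimes b$ genuinely lie in $H_0 \otimes H_n$ and $H_n \otimes H_0$ and that the residual $w$ can be arranged in $H_{n-1}\otimes H_{n-1}$ with $(\varepsilon\otimes\varepsilon)(w)=0$, so that the inductive hypothesis applies to every summand; everything else is a routine transcription of the pointed case, since Lemma~\ref{deri} and Skryabin's theorem are used as black boxes exactly as before.
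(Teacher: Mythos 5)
Your overall strategy is exactly the paper's: induct on the coradical filtration to show that every $x\in H_n$ with $\varepsilon(x)=0$ acts as a derivation $A^{H_0}\to A$, then use Skryabin's integrality of $A$ over $A^H$ together with Lemma~\ref{deri} (with $B=A^H$, $C=A^{H_0}$, $M=A$) to kill that derivation; your remarks that $A^{H_0}$ is a subalgebra because $H_0$ is a subcoalgebra, and that $A$ is torsion-free over $A^{H_0}$ because it is a domain, are correct and are implicitly used in the paper too.

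The genuine gap is in the structural input you invoke, and it sits precisely in the non-pointed case that Theorem~\ref{inva2} is meant to cover. You claim that $H_n/H_{n-1}$ is spanned by elements $x$ with $\Delta(x)\equiv a\otimes x+x\otimes b \pmod{H_{n-1}\otimes H_{n-1}}$ for single elements $a,b\in H_0$. This is false whenever the relevant simple subcoalgebras of $H_0$ have dimension $>1$: such an $x$ would span a one-dimensional left $H_0$-subcomodule of the $H_0$-bicomodule $H_n/H_{n-1}$, but a simple comodule over a $d\times d$ matrix coalgebra with $d>1$ has no one-dimensional subcomodules, so no such spanning elements exist. (In the pointed case of Theorem~\ref{inva} they do exist, which is why the Taft--Wilson form you quote works there.) The paper's proof replaces this by the correct matrix-coefficient version: $H_n/H_{n-1}$ is spanned by families $x_{ii',C,C'}$ with
$$
\Delta(x_{ii',C,C'})=\sum_j t_{ij}\otimes x_{ji',C,C'}+\sum_{j'}x_{ij',C,C'}\otimes t'_{j'i'}+w,
$$
where $(t_{ij})$, $(t'_{i'j'})$ are comatrix bases of the simple subcoalgebras $C,C'$ and $w\in H_{n-1}\otimes H_{n-1}$ with $(\varepsilon\otimes\varepsilon)(w)=0$. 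With this corrected form your computation goes through essentially unchanged: on $f_1,f_2\in A^{H_0}$ each $t_{ij}$ acts by $\varepsilon(t_{ij})=\delta_{ij}$, each $t'_{j'i'}$ by $\delta_{j'i'}$, and $w$ acts by zero by the inductive hypothesis, so only the diagonal terms survive and each $x_{ii',C,C'}$ is a derivation of $A^{H_0}$ into $A$. You flagged this as your anticipated obstacle, but the fix is not to verify your claimed shape (it cannot be verified) — it is to work with the full matrix-coefficient families as above; after that substitution, the rest of your argument coincides with the paper's proof.
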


\begin{proof} 
As before, we show by induction in $n$ that 
$x\in H_n$ with $\varepsilon(x)=0$ acts by zero on
$A^{H_0}$. It is shown similarly to the Taft-Wilson theorem
that $H_n/H_{n-1}$ is spanned by elements 
$x_{ii',C,C'}$, where $C,C'$ are simple subcoalgebras 
of $H$, and
$$
\Delta(x_{ii',C,C'})=\sum_j t_{ij}\otimes
x_{ji',C,C'}+\sum_{j'}x_{ij',C,C'}\otimes t'_{j'i'}+w,
$$
where $t_{ij}$ is a basis 
of $C$ such that $\Delta(t_{ij})=\sum_k t_{ik}\otimes t_{kj}$ and
$t'_{i'j'}$ is a similar basis of $C'$. Moreover, 
$w\in H_{n-1}\otimes H_{n-1}$
 is such that $(\varepsilon\otimes \varepsilon)(w)=0$.
So without loss of generality we may assume that
$x=x_{ii',C,C'}$. Then by the induction assumption, $x$ is a derivation of 
$A^{H_0}$ into $A$. The rest of the proof is the same as that of
Theorem \ref{inva}(i). 
\end{proof}

Even though this paper is about actions of Hopf algebras on
commutative algebras, let us give a generalization of Theorems
\ref{inva} and \ref{inva2} to the noncommutative case. Namely, we provide a result for Hopf actions on Azumaya algebras. Recall that examples of Azumaya algebras include matrix algebras over commutative algebras and central simple algebras.

\begin{theorem}\label{inva3} 
Let $H$ be a finite dimensional Hopf algebra over
$\Bbbk$.

(i) Assume that $H$ acts on an Azumaya algebra $A$ with center $Z$, where $Z$ is an integral domain. 
Let $Z^H=Z\cap A^H$ and $Z^{H_0}=Z\cap A^{H_0}$. Then, $Z^H=Z^{H_0}$. 

(ii) If, in addition to the hypotheses of (i), $H$ is pointed, then $Z^H = Z^G$ for $G = G(H)$.
\end{theorem}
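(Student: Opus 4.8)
The plan is to follow the proof of Theorem~\ref{inva2}, but to work inside the \emph{commutative} subring $Z^{H_0}$ of the center wherever the argument uses commutativity, so that the commutative-algebra input of Lemma~\ref{deri} still applies. Part~(ii) is immediate from part~(i): when $H$ is pointed, $H_0=\Bbbk G$ by Lemma~\ref{lem:pointed}(a), so $A^{H_0}=A^G$ and hence $Z^{H_0}=Z^G$. So I concentrate on (i). Since $H_0$ is a subcoalgebra of $H$ and $\varepsilon$ is multiplicative, $A^H$ and $A^{H_0}$ are subalgebras of $A$; thus $Z^H=Z\cap A^H$ and $Z^{H_0}=Z\cap A^{H_0}$ are commutative subrings of the domain $Z$ with $Z^H\subseteq Z^{H_0}$, and it suffices to prove the reverse inclusion, i.e. that every $x\in H$ with $\varepsilon(x)=0$ annihilates $Z^{H_0}$.

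I would prove this by induction on $n$ for $x\in H_n$, exactly as in Theorem~\ref{inva2}. The case $n=0$ is the definition of $Z^{H_0}$. For $n>0$, the Taft-Wilson-type spanning of $H_n/H_{n-1}$ by elements $x_{ii',C,C'}$ with $\Delta(x_{ii',C,C'})=\sum_j t_{ij}\otimes x_{ji',C,C'}+\sum_{j'}x_{ij',C,C'}\otimes t'_{j'i'}+w$ (where $C,C'$ are simple subcoalgebras, $t_{ij}\in C$, $t'_{j'i'}\in C'$, and $w\in H_{n-1}\otimes H_{n-1}$ with $(\varepsilon\otimes\varepsilon)(w)=0$) reduces us to $x=x_{ii',C,C'}$. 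For $f_1,f_2\in Z^{H_0}$ the elements $t_{ij},t'_{j'i'}$ of $H_0$ act on $Z^{H_0}\subseteq A^{H_0}$ through the counit, and the last term (coming from $w$) vanishes by the induction hypothesis since $(\varepsilon\otimes\varepsilon)(w)=0$; this yields $x\cdot(f_1f_2)=f_1(x\cdot f_2)+(x\cdot f_1)f_2$. Because the elements of $Z^{H_0}$ are central in $A$, this says precisely that $d:=x|_{Z^{H_0}}:Z^{H_0}\to A$ is a derivation of commutative rings into the $Z^{H_0}$-module $A$, and $d$ vanishes on $Z^H$ because $\varepsilon(x)=0$ and $Z^H\subseteq A^H$.

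To conclude that $d=0$ I would apply Lemma~\ref{deri} with $B=Z^H\subseteq C=Z^{H_0}$ and $M=A$, for which two inputs are needed. First, $A$ is torsion-free over $Z^{H_0}$: an Azumaya algebra is finitely generated projective over its center, so $A$ is torsion-free over the domain $Z$, hence over the subring $Z^{H_0}$. Second, $Z^{H_0}$ is integral over $Z^H$: the two-sided ideals of the Azumaya algebra $A$ are extended from $Z$, so $A$ is prime and in particular $H$-reduced, whence Skryabin's theorem \cite{Skryabin:invariants} (as already used in the proofs of Theorems~\ref{inva} and~\ref{inva2}) shows that $A$ is integral over $A^H$; restricting to central elements gives that $Z$, and hence $Z^{H_0}$, is integral over $Z^H$. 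Then Lemma~\ref{deri} forces $d=0$, so $x$ annihilates $Z^{H_0}$, completing the induction and the proof of (i), and with it (ii).

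The step I expect to be the main obstacle is the second input above: one must verify that the integrality of $A$ over $A^H$ supplied by Skryabin's theorem genuinely descends to the statement that $Z$ is integral over $Z^H=Z\cap A^H$, since $Z$ need not be an $H$-submodule of $A$. This is where the Azumaya hypothesis (equivalently, the PI/centrality structure) is really used, and it is worth isolating the precise form of Skryabin's theorem invoked. By contrast, the Taft-Wilson bookkeeping is identical to that of Theorem~\ref{inva2} and requires no new idea.
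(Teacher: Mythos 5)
Your reduction of (ii) to (i), the Taft--Wilson induction showing that $x|_{Z^{H_0}}$ is a derivation into $A$, the observation that $A$ is torsion-free over $Z^{H_0}$ (being projective over the domain $Z$), and the final appeal to Lemma~\ref{deri} with $B=Z^H$, $C=Z^{H_0}$, $M=A$ all match the paper's argument. The one place where you diverge is exactly the place you flag as the ``main obstacle,'' and there the gap is real: Skryabin's theorem (even granting that it applies to the noncommutative algebra $A$ in the form you want) gives you a monic polynomial for $z\in Z$ with coefficients in $A^H$, and there is no way to ``restrict to central elements'' to convert this into a monic polynomial with coefficients in $Z^H=Z\cap A^H$ --- the coefficients $a_i\in A^H$ need not be central, and the natural candidate for fixing this (the reduced trace $A\to Z$, which is $Z$-linear) is not $H$-equivariant, so it does not carry $A^H$ into $Z^H$. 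Integrality over $A^H$ simply does not descend to integrality over $Z^H$ by a formal manipulation.

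The paper closes this gap by citing a different, stronger input: \cite[Theorem~3.1(ii)]{Etingof:PIintegrality}, which asserts directly that for a finite dimensional Hopf algebra acting on an Azumaya algebra $A$ with center $Z$, the algebra $A$ is integral over the \emph{central} invariants $Z^H$. From that, $Z^{H_0}\subseteq A$ is integral over $Z^H$ and Lemma~\ref{deri} applies as you intend. So your proof becomes correct once you replace your Skryabin-based integrality claim with this citation; as written, the integrality of $Z^{H_0}$ over $Z^H$ is asserted but not established, and this is precisely the step where the Azumaya/PI structure has to do genuine work via a separate theorem rather than by restriction of Skryabin's commutative-case statement.
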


\begin{proof} 
(i) As in the proof of Theorem~\ref{inva2}, we get that $x$ defines a derivation from
$Z^{H_0}$ to $A$. By
\cite[Theorem~3.1(ii)]{Etingof:PIintegrality}, $A$ is integral over $Z^H$. Hence, $Z^{H_0}$ is also integral
over $Z^H$, (i.e., $Z^{H_0}$ is an algebraic field extension of $Z^H$). So the statement follows from Lemma \ref{deri},
specialized to $B=Z^H$, $C=Z^{H_0}$, and $M=A$. 

(ii) This follows immediately from part (i) and Lemma~\ref{lem:pointed}(a).
\end{proof}

%%%%%%%%%%%%%%%%%%%%%%%%%%%%%%%%%%%%%
%%%%%%%%%%%%%%%%%%%%%%%%%%%%%%%%%%%%%
%%%%%%%%%%%%%%%%%%%%%%%%%%%%%%%%%%%%%
 
\section{Examples and non-examples of Galois-theoretical Hopf algebras}
\label{sec:examples}

In this section, we study examples and non-examples of finite dimensional pointed Galois-theoretical Hopf algebras, including 

\begin{tabular}{lll}
$\bullet$ Taft algebras $T(n)$& {[type A$_1$]} & (Section~\ref{subsec:T(n)}),\\
$\bullet$  Nichols Hopf algebras $E(n)$  & {[type A$_1^{\times n}$]}  &(Section~\ref{subsec:E(n)}),\\
$\bullet$  generalized Taft algebras $T(n,m,\alpha)$ &{[type A$_1$]}& (Section~\ref{subsec:genTaft}),\\
$\bullet$  book algebras $\mathbf{h}(\zeta,p)$ & {[type A$_1 \times$A$_1$]} & (Section~\ref{subsec:book}),\\
$\bullet$  the 81-dimensional Hopf algebra $H_{81}$ & {[type A$_2$]}  &(Section~\ref{subsec:H81}),\\
$\bullet$  $u_q(\mf{sl}_2)$ and gr$(u_q(\mf{sl}_2))$ & {[type A$_1 \times$A$_1$]}  & (Section~\ref{subsec:uqsl2}),\\
$\bullet$  $u_q(\mf{gl}_2)$ & {[type A$_1 \times$A$_1$]}  & (Section~\ref{subsec:uqslntwist}),\\
$\bullet$  some Drinfeld twists of $u_q(\mf{gl}_n)$, $u_q(\mf{sl}_n)$ & {[type A$_{n-1}\times$A$_{n-1}$]} &  (Section~\ref{subsec:uqslntwist}),\\
$\bullet$ some Drinfeld twists of $u_q^{\geq 0}(\mf{g})$ & {[same type as $\mf{g}$]}  & (Section~\ref{subsec:uqgtwist}).
\end{tabular}
\smallskip

Altogether, the propositions in these sections yield a proof of Theorem~\ref{thm:main}. An example of a Galois-theoretical Drinfeld twist of $u_{q^{1/2}}(\mf{gl}_2)$ is provided in Section~\ref{subsec:moduqgl2}. We also present a finite dimensional non-pointed Galois-theoretical Hopf algebra in Section~\ref{subsec:nonptedGT}. We end with a discussion of the Galois-theoretical property of duals and twists of Hopf algebras in Section~\ref{subsec:GTtwist}. 

To begin, consider the notation and the preliminary result provided below.
\medskip

\noindent {\it Notation}. 
 Let $L$ be a $\kk \mathbb{Z}_n$-module field, for $\mathbb{Z}_n = \langle g ~|~ g^n =1\rangle$. Let $\zeta$ be a primitive $n$-th root of unity. We set 
$L_{(i)} := \{r \in L ~|~ g \cdot r = \zeta^{-i} r\}$ for $i=0,\dots, n-1$.

\begin{lemma} \label{lem:Zn} Given an inner faithful $\Bbbk\mathbb{Z}_n$-module field $L$ as above, we have that:
\begin{enumerate}
\item $L$ is $\mathbb{Z}_n$-graded and decomposes as a direct sum of $g$-eigenspaces $L_{(i)}$ with eigenvalue $\zeta^{-i}$, where $L_{(0)} = L^{\mathbb{Z}_n}$
and $L_{(1)} \neq 0$.
\item For $u \in L_{(1)}$, we have that $L$ is an extension of $L^{\mathbb{Z}_n}$, so that $$L~=~L^{\mathbb{Z}_n}[u]/(u^n - v).$$
Here, $v$ is a non-$n'$-th power in $(L^{\mathbb{Z}_n})^{\times}$ for any $n'>1$ dividing $n$.
\end{enumerate}
\end{lemma}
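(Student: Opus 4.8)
The statement is the standard structure theory of cyclic Galois extensions (Kummer theory) packaged for a $\kk\mathbb{Z}_n$-module field, so I would organize the argument around the action of the single grouplike $g$. First I would establish part (i). Since $g^n = 1$ and $\kk$ contains a primitive $n$-th root of unity $\zeta$, the operator $g$ acting $\kk$-linearly on $L$ satisfies $g^n = \id$, so its minimal polynomial divides $X^n - 1 = \prod_{i=0}^{n-1}(X - \zeta^{-i})$, which has distinct roots; hence $L = \bigoplus_{i=0}^{n-1} L_{(i)}$ as a $\kk$-vector space, where $L_{(i)} = \{r : g\cdot r = \zeta^{-i} r\}$. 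The module-algebra axiom $g\cdot(rs) = (g\cdot r)(g\cdot s)$ shows $L_{(i)} L_{(j)} \subseteq L_{(i+j \bmod n)}$, so this is a $\mathbb{Z}_n$-grading of $L$ as an algebra. Clearly $L_{(0)} = L^{\mathbb{Z}_n}$. For $L_{(1)} \neq 0$ I would invoke inner faithfulness: if $L_{(1)} = 0$, then in fact one checks the action of $g$ factors through $\kk\mathbb{Z}_d$ for the proper divisor $d = n/e$ where $e > 1$ is minimal with $L_{(e)} \neq 0$ (more precisely, the set of $i$ with $L_{(i)}\neq 0$ is a subgroup of $\mathbb{Z}_n$, being the support of a grading of a \emph{domain}, so it equals $\langle e\rangle$ for some $e \mid n$; if $e > 1$ the Hopf ideal generated by $g^{n/e} - 1$ annihilates $L$, contradicting inner faithfulness), hence $e = 1$.

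\textbf{Part (ii).} Fix $0 \neq u \in L_{(1)}$. Then $u^n \in L_{(0)} = L^{\mathbb{Z}_n}$; set $v := u^n$. Since $L$ is a field and $u \neq 0$, each power $1, u, u^2, \dots, u^{n-1}$ is nonzero and lies in $L_{(0)}, L_{(1)}, \dots, L_{(n-1)}$ respectively; multiplication by $u^i$ gives an $L^{\mathbb{Z}_n}$-linear injection $L_{(0)} \hookrightarrow L_{(i)}$, and multiplication by $u^{-i}$ gives the reverse, so $\dim_{L^{\mathbb{Z}_n}} L_{(i)} = \dim_{L^{\mathbb{Z}_n}} L_{(0)} = 1$. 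Therefore $[L : L^{\mathbb{Z}_n}] = n$ and $\{1, u, \dots, u^{n-1}\}$ is an $L^{\mathbb{Z}_n}$-basis, which says exactly that $L = L^{\mathbb{Z}_n}[u]$ with $u$ satisfying $u^n = v$, i.e. $L = L^{\mathbb{Z}_n}[u]/(u^n - v)$ and $X^n - v$ is the minimal polynomial of $u$. Finally, to see $v$ is not an $n'$-th power in $(L^{\mathbb{Z}_n})^\times$ for any $n' > 1$ dividing $n$: if $v = w^{n'}$ with $w \in L^{\mathbb{Z}_n}$, then $(u^{n/n'}/w)^{n'} = v/w^{n'} = 1$, so $u^{n/n'}/w$ is an $n'$-th root of unity, hence lies in $\kk \subseteq L^{\mathbb{Z}_n} = L_{(0)}$; but $u^{n/n'} \in L_{(n/n')}$ and $L_{(n/n')} \cap L_{(0)} = 0$ since $n/n' \not\equiv 0 \pmod n$, a contradiction. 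Hence $v$ is a non-$n'$-th power for every such $n'$.

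\textbf{Main obstacle.} The only genuinely substantive point — everything else is the grading bookkeeping above — is extracting $L_{(1)} \neq 0$ and, relatedly, pinning down that the support of the grading is all of $\mathbb{Z}_n$ rather than a proper subgroup. The clean way is the observation that in a $G$-graded domain the support $\{i : L_{(i)} \neq 0\}$ is closed under the group operation (product of nonzero homogeneous elements is nonzero), hence is a subgroup of the finite group $\mathbb{Z}_n$, and inner faithfulness of the $\kk\mathbb{Z}_n$-action forces this subgroup to be all of $\mathbb{Z}_n$: otherwise the action factors through the quotient $\kk\mathbb{Z}_n / (g^d - 1)$ for the index $d > 1$ of the support. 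I would state this as a short self-contained paragraph. The remainder is routine linear algebra over the fixed field $L^{\mathbb{Z}_n}$.
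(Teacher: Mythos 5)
Your proof is correct and follows essentially the same route as the paper, which simply asserts part (a) is clear with $L_{(1)}\neq 0$ forced by inner faithfulness and notes that part (b) holds because $t^n-v$ is the minimal polynomial of $u$; your write-up supplies the eigenspace/grading bookkeeping, the support-is-a-subgroup argument, and the direct verification that $v$ is a non-$n'$-th power, all of which are faithful expansions of the paper's terse proof.
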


\begin{proof}
Part (a) is clear. In particular, $L_{(1)} \neq 0$ due to inner faithfulness. Part (b) follows since $t^n -v$ is the minimal polynomial of the element $u$.
\end{proof}

%%%%%%%%%%%%%%%%%%%%%%%%%%%%%%%%%%%%%%

\subsection{The Taft algebras $T(n)$ are Galois-theoretical} \label{subsec:T(n)} 
Take $n \geq 2$ and let  $\zeta$ a primitive $n$-th root of unity.
Let $T(n)$ be the {\it Taft algebra} of dimension $n^2$, which is generated by a grouplike element $g$ and a $(g,1)$-skew primitive element $x$, subject to relations 
$$g^n =1, \quad x^n = 0, \quad gx=\zeta xg.$$ 
We have that $T(n)$ acts inner faithfully on the commutative
domain $\kk[z]$ by
$$g \cdot z = \zeta^{-1} z , \quad x \cdot z =1.$$
So,  $T(n)$ is Galois-theoretical by Lemma~\ref{lem:Skryabin}. More explicitly, we can extend the action of $T(n)$ on $\kk[z]$ to an action of $T(n)$ on $\kk(z)$ since $T(n)$ acts trivially on $\kk[z^n]$ and $\kk(z) = \kk[z] \otimes_{\kk[z^n]} \kk(z^n)$. Further, we classify all inner faithful $T(n)$-module fields below, which recovers \cite[Theorem~2.5]{MontgomerySchneider}.

\begin{proposition} \label{prop:T(n)}
The Taft algebras $T(n)$ are Galois-theoretical, and the fields $L$ that admit an inner faithful $T(n)$-action are precisely of the form $$L = F[u]/(u^n-v)$$ for $F = L^{T(n)}$, $u \in L_{(1)}$, and $v$ a non-$n'$-th power in $F^{\times}$, for any $n'>1$ dividing $n$. 
So, $L$ is a cyclic degree $n$ Galois extension of its subfield of invariants $F$ with Galois group $\mathbb{Z}_n$.
We also have that
$g \cdot u = \zeta^{-1} u,  ~x \cdot u =1$ and $g \cdot r_0 = r_0, ~ x \cdot r_0 = 0$ for all $r_0 \in F$. \qed
\end{proposition}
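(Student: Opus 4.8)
The plan is to establish the two inclusions separately: (I) every inner faithful $T(n)$-module field $L$ has the asserted shape, with the asserted action, and (II) every field of that shape carries such an action inner faithfully. For (I), I first restrict the action to the Hopf subalgebra $\kk\mathbb{Z}_n=\kk\langle g\rangle$. By Proposition~\ref{prop:GTprelim}(c) this restricted action is still inner faithful, so Lemma~\ref{lem:Zn} applies: $L$ is $\mathbb{Z}_n$-graded, $L_{(1)}\neq 0$, and for any $0\neq u\in L_{(1)}$ we have $L=L^{\mathbb{Z}_n}[u]/(u^n-v)$ with $v:=u^n$ a non-$n'$-th power in $(L^{\mathbb{Z}_n})^\times$ for every $n'>1$ dividing $n$, and $g\cdot u=\zeta^{-1}u$. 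Since $x$ is a $(g,1)$-skew primitive element of $T(n)$ and the Hopf subalgebra $\langle g,x\rangle=T(n)$ is finite dimensional, Corollary~\ref{cor:extnGalois} gives $F:=L^{T(n)}=L^{\mathbb{Z}_n}$ with $F\subset L$ Galois of group $\mathbb{Z}_n$, and it also shows $x$ acts by zero on $L^{\mathbb{Z}_n}=F$. It remains to normalize $u$: the Taft relation $gx=\zeta xg$ gives $g\cdot(x\cdot u)=\zeta\,x\cdot(g\cdot u)=x\cdot u$, so $x\cdot u\in L_{(0)}=F$; and $x\cdot u\neq 0$, for $x\cdot u=0$ together with $x|_F=0$ and the skew-Leibniz rule would force $x\cdot L=0$ (as $L=F[u]$), contradicting inner faithfulness since $(x)$ is a nonzero Hopf ideal of $T(n)$. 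Replacing $u$ by $(x\cdot u)^{-1}u$, which again lies in $L_{(1)}$ so that Lemma~\ref{lem:Zn}(b) still applies, I may assume $x\cdot u=1$; together with $g\cdot u=\zeta^{-1}u$, $g\cdot r_0=r_0$ and $x\cdot r_0=0$ for $r_0\in F$, this is the assertion of~(I).

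For (II), let $F\supseteq\kk$ be a field and $v\in F^\times$ a non-$n'$-th power for every $n'>1$ dividing $n$. I first note that $v$ is transcendental over $\kk$: otherwise $v\in\kk$ since $\kk$ is algebraically closed, but then $v$ is an $n$-th power in $\kk^\times\subseteq F^\times$, contradicting the hypothesis (here $n\geq 2$ is used). Put $L:=F[u]/(u^n-v)$. Because $\kk$, hence $F$, contains a primitive $n$-th root of unity, and because the hypothesis on $v$ is equivalent to $v\notin F^{\times p}$ for every prime $p\mid n$, Kummer theory shows $t^n-v$ is irreducible over $F$ (the exceptional clause for $4\mid n$ in the general irreducibility criterion is vacuous since $\sqrt{-1}\in F$), so $L$ is a field of degree $n$ over $F$. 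Now I define the $T(n)$-action: let $g$ act as the $F$-algebra automorphism $u\mapsto\zeta^{-1}u$, which has order exactly $n$ and satisfies $L^{\langle g\rangle}=F$ by a degree count; and let $x$ act as the unique $F$-linear map with $x\cdot 1=0$ and $x\cdot u^i=c_i\,u^{i-1}$ for $1\leq i\leq n-1$, where $c_i:=\sum_{j=0}^{i-1}\zeta^{-j}$ (these scalars are forced by $x\cdot u=1$ and the skew-Leibniz rule $x\cdot(ab)=(g\cdot a)(x\cdot b)+(x\cdot a)b$ applied to $u\cdot u^{i-1}$, and $c_i\neq 0$ for $1\leq i\leq n-1$ since $\zeta$ is a primitive $n$-th root of unity). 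One then checks the operators $g,x$ on $L$ satisfy the Taft relations $g^n=\id$, $x^n=0$, $gx=\zeta xg$ — the latter two because $x$ maps $L_{(i)}\to L_{(i-1)}$ and kills $L_{(0)}$ — and that $L$ is a $T(n)$-module algebra, for which it suffices to verify the skew-Leibniz rule on the algebra generators $F$ and $u$.

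Inner faithfulness of this action is then immediate: since $v=u^n$ is transcendental over $\kk$, so is $u$, so $\kk[u]\subset L$ is a polynomial ring on which $T(n)$ acts by $g\cdot u=\zeta^{-1}u$, $x\cdot u=1$, i.e.\ $\kk[u]\cong\kk[z]$ as $T(n)$-module algebras; the action on $\kk[z]$ was already seen to be inner faithful, hence so is the action on $\kk[u]$ and a fortiori on $L\supseteq\kk[u]$. The Galois conclusion is again Corollary~\ref{cor:extnGalois}. I expect the only steps requiring genuine (if routine) care to be the irreducibility of $t^n-v$ over $F$ and the construction and verification of the operator $x$ in~(II); everything else follows quickly from Lemma~\ref{lem:Zn}, Corollary~\ref{cor:extnGalois}, and Proposition~\ref{prop:GTprelim}(c), which is to be expected since the statement recovers \cite[Theorem~2.5]{MontgomerySchneider}.
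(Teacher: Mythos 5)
Your argument for the forward direction (every inner faithful $T(n)$-module field has the stated form and action) is essentially the paper's: restrict to $\kk\langle g\rangle$, invoke Lemma~\ref{lem:Zn} and Theorem~\ref{inva} (you use Corollary~\ref{cor:extnGalois}, which is equivalent here) to get $F=L^{T(n)}=L^{\mathbb{Z}_n}$ and the grading, observe $x\cdot u\in L_{(0)}=F$ from $gx=\zeta xg$, and rescale $u$ so that $x\cdot u=1$. One point where you are actually more careful than the printed proof: the paper replaces $u$ by $w^{-1}u$ without remarking that $w=x\cdot u$ could a priori vanish; your observation that $x\cdot u=0$ would force $x$ to annihilate all of $L=F[u]$ and hence kill inner faithfulness is exactly the missing justification. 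The main divergence is that you also prove the converse realization statement (every $F[u]/(u^n-v)$ with $v$ a non-$n'$-th power carries an inner faithful $T(n)$-action): the paper does not put this inside the proof of Proposition~\ref{prop:T(n)} — it exhibits only the single example $\kk(z)$ before the proposition to establish Galois-theoreticity via Lemma~\ref{lem:Skryabin}, and defers the bijective classification, including the irreducibility of $t^n-v$, to Proposition~\ref{prop:T(n) reform}, where irreducibility is proved by a direct orbit argument on monic divisors rather than by quoting the Kummer-theoretic criterion as you do. Your explicit construction of the operator $x$ on $F[u]/(u^n-v)$ is correct (the key consistency check being $\sum_{j=0}^{n-1}\zeta^{-j}=0$, so that $x\cdot v=0$ as required), though your remark that it ``suffices to verify the skew-Leibniz rule on generators'' should really be phrased as defining the action on $F[t]$ and checking that the ideal $(t^n-v)$ is stable; this is routine and you have effectively done the computation. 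Your transcendence-of-$v$ route to inner faithfulness works, but it is slightly roundabout: it is quicker to note that any nonzero Hopf ideal of $T(n)$ meets $\kk x\oplus\operatorname{span}(g^i-1)$ nontrivially, and both $x$ and $g$ act nontrivially with $g$ of full order $n$.
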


\begin{proof}
Let us determine the $T(n)$-module fields $L$.  Since $G(T(n)) \cong  \mathbb{Z}_n$, we can employ Lemma~\ref{lem:Zn}. Observe that $L^{\mathbb{Z}_n} = L^{T(n)}$ by Theorem~\ref{inva}(i); let us denote this field by $F$. Take a nonzero element $u \in L_{(1)}$.
Since $g \cdot (x \cdot u) ~=~ \zeta x \cdot (g \cdot u) ~=~ x \cdot u$, we have that $x \cdot u = w \in F$. Moreover, we can replace $u$ with $w^{-1}u$ to get that $x \cdot u =1$.   
Also, $x \cdot r_0=\varepsilon(x) r_0= 0$ for all $r_0 \in F$. 
Finally, the Galois group of the extension $L^{T(n)} \subset L$ is $G(T(n)) = \mathbb{Z}_n$ by Theorem~\ref{inva}(ii).
\end{proof}

One can reformulate Proposition~\ref{prop:T(n)} as follows. 
 
\begin{proposition} \label{prop:T(n) reform}
Fields $L \supset \kk$ with an inner faithful $T(n)$-action are in one-to-one correspondence with fields $F \supset \kk$ together with  a non-$n'$-th power $v \in F^{\times}$, for any $n'>1$ dividing $n$.
\end{proposition}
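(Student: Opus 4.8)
The plan is to package the content of Proposition~\ref{prop:T(n)} as a bijection, so most of the work is already done and what remains is to specify the two maps and check they are mutually inverse. In one direction, given a field $F \supset \kk$ and a non-$n'$-th power $v \in F^\times$ for every $n' > 1$ dividing $n$, I would set $L = F[u]/(u^n - v)$. The hypothesis on $v$ guarantees (by the standard irreducibility criterion for $t^n - v$ over a field, using that $\kk$ contains a primitive $n$-th root of unity $\zeta$) that $t^n - v$ is irreducible, so $L$ is a field, a cyclic degree-$n$ Galois extension of $F$. I would then define a $T(n)$-action on $L$ by declaring $g$ to act $F$-linearly with $g \cdot u = \zeta^{-1} u$ and $x$ to act by the $F$-linear derivation with $x \cdot u = 1$; one checks these are compatible with the relations $g^n = 1$, $gx = \zeta xg$, $x^n = 0$ on the generator $u$ (hence on all of $L$ since $L = F[u]$), and that $L$ with this action has invariants exactly $F = L^{\mathbb{Z}_n} = L^{T(n)}$ (using Theorem~\ref{inva}(i)), making the action inner faithful because $x$ acts nontrivially and $g$ has order exactly $n$.

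In the other direction, given a field $L \supset \kk$ with an inner faithful $T(n)$-action, I would send it to the pair $(F, v)$ where $F = L^{T(n)}$ and $v = u^n \in F^\times$ for a choice of $u \in L_{(1)}$ normalized so that $x \cdot u = 1$; this is exactly the data extracted in Proposition~\ref{prop:T(n)}, and that proposition already tells us $v$ is a non-$n'$-th power in $F^\times$ for every $n' > 1$ dividing $n$. The one subtlety is well-definedness: the element $u$ with $x \cdot u = 1$ lying in $L_{(1)}$ is not unique, since we may replace $u$ by $u + c$ for any $c \in F$ with... wait, $x \cdot (u+c) = 1$ but $g \cdot (u + c) = \zeta^{-1} u + c$, which is not in $L_{(1)}$ unless $c = 0$; so in fact $u$ is uniquely determined by the normalization $x \cdot u = 1$ together with $u \in L_{(1)}$, because the difference of two such lies in $L_{(1)} \cap \ker(x) \cap$ (the condition forces it into $F$, and $F \cap L_{(1)} = 0$). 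Hence $v = u^n$ is genuinely canonical, and the map is well-defined.

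The main obstacle — really the only nontrivial point — is verifying that the two constructions are mutually inverse \emph{as stated}, i.e. that starting from $(F,v)$, building $L$, and extracting the pair gives back $(F,v)$, and conversely. The forward-then-back composition is immediate since in $L = F[u]/(u^n-v)$ the normalized generator is $u$ itself and $u^n = v$. For the back-then-forward composition, Proposition~\ref{prop:T(n)} already asserts $L \cong L^{T(n)}[u]/(u^n - v)$ with the action matching the prescribed one on the generator, so the two $T(n)$-module fields are isomorphic. I would also note in passing that one should check the correspondence respects the natural notion of isomorphism on each side (isomorphisms of $T(n)$-module fields on the left, and $\kk$-algebra isomorphisms $F \cong F'$ carrying $v$ to $v'$ up to the ambiguity of multiplying $v$ by an $n$-th power in $F^\times$ and reparametrizing $u$) — but since the proposition is phrased as a plain one-to-one correspondence of the raw data, it suffices to exhibit the inverse bijections above.

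\begin{proof}
This is a restatement of Proposition~\ref{prop:T(n)}. Given a field $F \supset \kk$ and $v \in F^\times$ which is not an $n'$-th power for any $n' > 1$ dividing $n$, the polynomial $t^n - v$ is irreducible over $F$ (recall $\zeta \in \kk$), so $L := F[u]/(u^n - v)$ is a field; define a $T(n)$-action on $L$ by letting $g$ act $F$-linearly with $g \cdot u = \zeta^{-1}u$ and $x$ act as the $F$-linear derivation with $x \cdot u = 1$. One checks directly on the generator $u$ that the relations $g^n = 1$, $x^n = 0$, $gx = \zeta xg$ and the module-algebra axioms hold, so this is a $T(n)$-module field; by Theorem~\ref{inva}(i) its invariants are $L^{\mathbb{Z}_n} = F$, and since $x$ acts nontrivially and $g$ has order $n$, the action is inner faithful. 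Conversely, an inner faithful $T(n)$-module field $L$ determines $F := L^{T(n)}$ and, by Proposition~\ref{prop:T(n)}, a unique $u \in L_{(1)}$ with $x \cdot u = 1$ (the difference of two such elements lies in $F \cap L_{(1)} = 0$); set $v := u^n \in F^\times$, which by Proposition~\ref{prop:T(n)} is a non-$n'$-th power in $F^\times$ for every $n' > 1$ dividing $n$. These two assignments are mutually inverse: starting from $(F,v)$ and passing through $L = F[u]/(u^n-v)$ returns $(F,v)$ since the normalized generator is $u$ and $u^n = v$; starting from $L$ and forming $F[u]/(u^n-v)$ recovers $L$ as a $T(n)$-module field by Proposition~\ref{prop:T(n)}. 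This establishes the claimed one-to-one correspondence.
\end{proof}
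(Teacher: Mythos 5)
Your proposal is correct and follows essentially the paper's route: both reduce the statement to Proposition~\ref{prop:T(n)} plus the fact that $t^n-v$ is irreducible over $F$ if and only if $v$ is a non-$n'$-th power for every $n'>1$ dividing $n$ --- the paper devotes its proof to establishing this criterion (while also citing Bourbaki), whereas you invoke it as standard, which is legitimate since $F\supset\kk$ contains a primitive $n$-th root of unity, so the usual extra caveat when $4\mid n$ is vacuous. The only slip is calling the $x$-action an ``$F$-linear derivation'': since $x$ is $(g,1)$-skew primitive it must act by the twisted Leibniz rule $x\cdot(ab)=(g\cdot a)(x\cdot b)+(x\cdot a)b$ (an honest derivation with $x\cdot u=1$ would give $x\cdot u^n=nu^{n-1}\neq 0=x\cdot v$ and be ill-defined on $F[u]/(u^n-v)$), but this is clearly what you intend given that you check the module-algebra axioms.
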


\begin{proof}
Retain the notation in Lemma~\ref{lem:Zn} and Proposition~\ref{prop:T(n)}. So, we have a field $L \supset \kk$ with an inner faithful $T(n)$-action if and only if 
$L=F[u]/(u^n-v)$, where $t^n-v \in F[t]$ is the minimal polynomial of $u \in L_{(1)}$ over $F = L^{\mathbb{Z}_n}$. For $L$ to be a field, this polynomial must be irreducible.  
So it remains to show that the polynomial $t^n-v$ is irreducible if and only if $v$ is a non-$n'$-th power in $F^{\times}$, for any $n'>1$ dividing $n$; see, for instance, \cite[Chapter~5, Section~11.8, Example~4]{BourbakiAlgII}.

The forward direction of this claim is clear. Conversely, 
suppose that $v\in F^\times$ and an irreducible polynomial $p(t)=t^s+q(t)$ divides $t^n-v$, with $\deg q(t) <s<n$. The group $\mathbb{Z}_n$ of roots of unity of order $n$
acts on such divisors by $p(t) \mapsto \zeta^{-s}p(\zeta t)$, where $\zeta$ is any $n$-th root of unity. Clearly, the stabilizer of $p(t)$ 
is contained in $\mathbb{Z}_s$ (as the constant term of $q(t)$ is nonzero). 
So, it must be exactly $\mathbb{Z}_s$. Else, there will be more than $n/s$ distinct monic irreducible divisors of $t^n-v$ of degree $s$, 
and their product must divide $t^n-v$, which is a contradiction. This means that $p(t)$ cannot contain any terms other than $t^s$ and constant term, that is to say,  $p(t)=t^s-f$ for $f \in F$. 
Hence, $n/s$ is an integer, and $f^{n/s}=v$. Thus,  the reverse direction of the claim holds.
\end{proof}

%%%%%%%%%%%%%%%%%%%%%%%%%%%%%%%%%%%

\subsection{The Nichols Hopf algebras $E(n)$ are Galois-theoretical} \label{subsec:E(n)} 
Take $n \geq 1$.
Let $E(n)$ be the Nichols Hopf algebra of dimension $2^{n+1}$, generated by a grouplike element $g$ and $(g,1)$-skew primitive elements $x_1, \dots, x_n$, subject to relations 
$$g^2 =1, \quad x_i^2 = 0, \quad g x_i=-x_i g, \quad x_ix_j = -x_jx_i.$$ 
We have that $E(n)$ acts inner faithfully on the commutative domain $\kk[z]$ and field $\kk(z)$ by 
$$g \cdot z = - z , \quad x_i \cdot z =z^{2(i-1)}.$$
  One sees this as $x_i \cdot z^r = 0$ for all $i$ and $r$ even. Thus, $E(n)$ is  Galois-theoretical by Lemma~\ref{lem:Skryabin}. By a similar argument to that in Section~\ref{subsec:T(n)}, $\kk(z)$ is an inner faithful $E(n)$-module field.

To determine all inner faithful $E(n)$-module fields $L$, observe that $G(E(n)) = \mathbb{Z}_2$ and use an argument similar to that in Section~\ref{subsec:T(n)} to get the following result.

\begin{proposition} \label{prop:E(n)}
The Hopf algebras $E(n)$ are Galois-theoretical and the fields $L$ that admit an inner faithful $E(n)$-action are precisely of the form $$L = F[u]/(u^2-v)$$ for $F  = L^{E(n)}$, $u \in L_{(1)}$, and $v$ a nonsquare element of $F^{\times}$. So, $L$ is a quadratic Galois extension of its subfield of invariants $F$ with Galois group $\mathbb{Z}_2$.
We have that
$g \cdot u = -u,  ~x_i \cdot u = w_i \in F$ for $\{w_i\}_{i=1, \dots n}$ linearly independent over $\kk$, and $g \cdot r_0 = r_0, ~ x \cdot r_0 = 0$ for all $r_0 \in F$.
\end{proposition}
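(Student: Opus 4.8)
The assertion that $E(n)$ is Galois-theoretical needs nothing new: the displayed action on $\kk[z]$ is inner faithful, so Lemma~\ref{lem:Skryabin} applies and $\kk(z)$ is an inner faithful $E(n)$-module field. For the classification of all inner faithful $E(n)$-module fields $L$, the plan is to run the argument of Proposition~\ref{prop:T(n)} with the cyclic group $\mathbb{Z}_n$ there replaced by $G(E(n)) = \mathbb{Z}_2 = \langle g\rangle$, and then to add one new step establishing linear independence of the scalars $x_i \cdot u$ over $\kk$.

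First I would invoke Lemma~\ref{lem:Zn} with $n = 2$ and $\zeta = -1$: inner faithfulness gives $L_{(1)} \neq 0$, and for a nonzero $u \in L_{(1)}$ one obtains $g \cdot u = -u$, that $v := u^2$ is a nonsquare in $(L^{\mathbb{Z}_2})^\times$, and $L = L^{\mathbb{Z}_2}[u]/(u^2 - v)$. By Theorem~\ref{inva}(i), $L^{E(n)} = L^{\mathbb{Z}_2} =: F$, and by Theorem~\ref{inva}(ii) the extension $F \subset L$ is Galois with group $\mathbb{Z}_2$. Next, since each $x_i$ lies in $H_1$ and has $\varepsilon(x_i) = 0$, the proof of Theorem~\ref{inva}(i) shows $x_i$ acts by zero on $A^G = F$; thus $x_i \cdot r_0 = 0$ for all $r_0 \in F$. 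Then applying $g$ to $x_i \cdot u$ and using the relation $g x_i = -x_i g$ together with $g \cdot u = -u$ gives $g \cdot (x_i \cdot u) = x_i \cdot u$, so $w_i := x_i \cdot u$ lies in $L_{(0)} = F$. This recovers all the formulas in the statement.

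The hard part will be the linear independence of $w_1, \dots, w_n$ over $\kk$, which is where inner faithfulness is used in an essential way. The plan is: suppose $\sum_i c_i w_i = 0$ with $c_i \in \kk$, and set $y = \sum_i c_i x_i$, a $(g,1)$-skew primitive element with $\varepsilon(y) = 0$. Then $y$ kills $F$ and $y \cdot u = \sum_i c_i w_i = 0$; since $L = F \oplus Fu$ as an $F$-space and $\Delta(y) = g \otimes y + y \otimes 1$, the module-algebra identity gives $y \cdot (r_0 u) = (g \cdot r_0)(y \cdot u) + (y \cdot r_0)u = 0$ for every $r_0 \in F$, so $y$ annihilates all of $L$. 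One then checks that the two-sided ideal $(y)$ is a coideal — directly from $\Delta(y) = g \otimes y + y \otimes 1$ one gets $\Delta((y)) \subseteq E(n) \otimes (y) + (y) \otimes E(n)$ and $\varepsilon((y)) = 0$ — hence a Hopf ideal, as $E(n)$ is finite dimensional. Since $(y)$ annihilates $L$ and the action is inner faithful, $(y) = 0$, so $y = 0$ and all $c_i$ vanish. The delicate point is precisely this bookkeeping that $(y)$ is a Hopf ideal killing $L$; everything else is a transcription of the Taft-algebra proof.

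Finally, for the converse, that such configurations really do occur, I would point to the displayed action on $\kk(z)$, which realizes $F = \kk(z^2)$, $v = z^2$, and $w_i = z^{2(i-1)}$, with $\{z^{2(i-1)}\}_{i=1}^n$ linearly independent over $\kk$; a direct construction from an arbitrary triple $(F, v, \{w_i\})$ with $v$ a nonsquare in $F^\times$ and $\{w_i\}$ linearly independent over $\kk$ is routine, checking compatibility with the defining relations of $E(n)$ (immediate since $w_i, v \in F$) and inner faithfulness by the same Hopf-ideal argument.
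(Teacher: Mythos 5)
Your proof is correct and follows essentially the same route as the paper: the explicit action on $\kk(z)$ for existence, the Lemma~\ref{lem:Zn}/Taft-algebra analysis plus Theorem~\ref{inva} for the structure of $L$ and the formulas for $g$ and $x_i$, and the equivalence of inner faithfulness with linear independence of the $w_i$ via skew-primitive elements and Hopf ideals. The only cosmetic difference is that you construct the Hopf ideal $(y)$ by hand, whereas the paper cites \cite[Corollary~5.4.7]{Montgomery} for the fact that any nonzero Hopf ideal of $E(n)$ meets $\mathrm{span}_{\kk}(x_1,\dots,x_n)$ nontrivially.
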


\begin{proof}
It suffices to establish inner faithfulness. Note that any nonzero Hopf ideal of $E(n)$ has nonzero intersection with span$_{\kk}(x_1,...,x_n)$ \cite[Corollary~5.4.7]{Montgomery}. So if $\{w_i\}_{i=1, \dots n}$ are linearly independent,  then $\{x_i\}_{i=1, \dots n}$ act by linearly independent linear transformations of $L$. 
Thus, the action is inner faithful.
\end{proof}

Note that while $E(n)$ can act inner faithfully on a field, it follows from the result above that $E(n)$ cannot act faithfully on a field (and hence, on a commutative domain). Indeed, the elements $gx_i-x_i$ act necessarily by zero for all $i$.

We also have the following generalization of the proposition above.

\begin{proposition} \label{prop:E(n)skewprod} Retain the notation above.
Let $G$ be a finite subgroup of $GL_n(\kk)$.  Then, one can form the semi-direct product
Hopf algebra $\kk G \ltimes E(n)$,  where $GL_n(\kk)$ acts on $E(n)$ by linear transformations of the skew primitive elements $x_i$ for $i=1, \dots n$.
Moreover, the Hopf algebra $\kk G\ltimes E(n)$ is Galois theoretical. 
\end{proposition}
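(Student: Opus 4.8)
The plan is to construct an explicit inner faithful action of $\kk G \ltimes E(n)$ on a field. Since $G \subseteq GL_n(\kk)$ acts linearly on $\mathrm{span}_\kk(x_1,\dots,x_n)$, we should build a module field on which $E(n)$ acts inner faithfully \emph{and} $G$ acts compatibly. The natural candidate, suggested by the proof of Proposition~\ref{prop:E(n)}, is a quadratic extension $L = F[u]/(u^2-v)$ where $x_i \cdot u = w_i \in F$ with the $w_i$ linearly independent over $\kk$, and where $G$ permutes the $w_i$ via its matrix action and fixes $u$, $v$, and all of $F_0$. First I would take $F = \kk(w_1,\dots,w_n)$, a purely transcendental extension, with $G$ acting by the linear substitution $\gamma \cdot w_i = \sum_j \gamma_{ji} w_j$ (the same formula as on the $x_i$), $G$ acting trivially on $u$, $g$ acting by $g\cdot u = -u$ and trivially on $F$, and $x_i \cdot u = w_i$, $x_i \cdot r_0 = 0$ for $r_0 \in F$, $x_i \cdot r_0 u = (x_i \cdot r_0) \cdot g(u) \pm \dots$ — more precisely I would just specify the action on the $F$-basis $\{1,u\}$ of $L$ and extend by the module-algebra axioms, taking $v \in F^\times$ to be a nonsquare (e.g.\ $v = w_1$, which is not a square in $\kk(w_1,\dots,w_n)$), so that $L$ is indeed a field.

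Next I would check that this really defines an action of the Hopf algebra $\kk G \ltimes E(n)$ on $L$: one must verify the defining relations $g^2=1$, $x_i^2=0$, $gx_i = -x_ig$, $x_ix_j=-x_jx_i$, the cross relations $\gamma x_i \gamma^{-1} = \sum_j \gamma_{ji}x_j$, and that $\gamma$, $g$, $x_i$ act as algebra endomorphisms twisted by the respective coproducts. The group elements $\gamma$ act by $\kk$-algebra automorphisms of $L$ fixing $u$, so they are automatically $H$-module-algebra compatible (grouplikes), and the $E(n)$-part is handled exactly as in the proof of Proposition~\ref{prop:E(n)}. The compatibility $\gamma x_i \gamma^{-1} = \sum_j \gamma_{ji}x_j$ reduces to checking it on $u$: $\gamma x_i \gamma^{-1} \cdot u = \gamma x_i \cdot u = \gamma \cdot w_i = \sum_j \gamma_{ji} w_j = \sum_j \gamma_{ji}(x_j \cdot u)$, as required, and on $F$ both sides act by zero.

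Finally I would verify inner faithfulness of the resulting $\kk G \ltimes E(n)$-action on $L$. By Proposition~\ref{prop:GTprelim}(c) it suffices — once we know $L$ is an inner faithful module — but here $\kk G \ltimes E(n)$ is not a Hopf subalgebra of anything obvious, so instead I would argue directly: the grouplikes $G$ act by distinct automorphisms of $F$ (the linear action of $G \subseteq GL_n(\kk)$ on $\kk(w_1,\dots,w_n)$ is faithful since it is faithful on the degree-one part $\mathrm{span}_\kk(w_i)$), and the $x_i$ act by linearly independent operators since the $w_i = x_i \cdot u$ are linearly independent over $\kk$; an argument in the style of the Lemma inside Proposition~\ref{prop:GTprelim}(f), together with \cite[Corollary~5.4.7]{Montgomery} controlling the Hopf ideals of $E(n)$ and a filtration/PBW argument on $\kk G \ltimes E(n)$, shows that any nonzero Hopf ideal acts nontrivially. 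The main obstacle I anticipate is this last point: pinning down the Hopf ideal structure of the smash product $\kk G \ltimes E(n)$ carefully enough to conclude inner faithfulness, rather than merely faithfulness of $G$ and of the span of the $x_i$ separately; one likely needs that a Hopf ideal, if nonzero, either meets $\kk G$ (killed by faithfulness of $G$ on $F$) or, projecting to $E(n)$, meets $\mathrm{span}_\kk(x_i)$ and hence acts nontrivially on $u$.
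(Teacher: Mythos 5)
Your construction is the same as the paper's (take $F=\kk(w_1,\dots,w_n)$ with $G$ acting linearly, form the quadratic extension $L=F[u]/(u^2-v)$, let $E(n)$ act as in Proposition~\ref{prop:E(n)} and let $G$ fix $u$), but there is one concrete error in how you choose $v$, and it breaks the construction. You declare that $G$ fixes $v$, and then propose $v=w_1$. These are incompatible: $\gamma\cdot w_1=\sum_j\gamma_{j1}w_j$ is not $w_1$ for general $\gamma\in G$. And the $G$-invariance of $v$ is not optional: for $\gamma$ to extend to a field automorphism of $L=F[u]/(u^2-v)$ one needs $\gamma(u)^2=\gamma(v)$, and writing $\gamma(u)=a+bu$ forces $ab=0$, hence $\gamma(v)=\gamma(u)^2\in\{a^2,\,b^2v\}$; since $w_1/\gamma(w_1)$ is a ratio of two distinct linear forms and is not a square in $\kk(w_1,\dots,w_n)$, neither case can occur, so with $v=w_1$ the $G$-action on $F$ does not extend to $L$ at all. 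The paper avoids this by picking $v$ a non-square element of $(F^G)^{\times}$ (so $G$ fixes $v$ and hence may be taken to fix $u$); you should do the same, and also note that such a $v$ exists (a non-square of $F$ lying in the $G$-invariants).

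With that repair, the rest of your argument is sound and matches what the paper leaves implicit: the smash-product relation $\gamma x_i\gamma^{-1}=\sum_j\gamma_{ji}x_j$ only needs to be checked on $u$ and on $F$, where it is immediate; and inner faithfulness follows because the grouplike group $G\times\langle g\rangle$ acts faithfully on $L$ ($G$ faithfully on the linear span of the $w_i$, and $g$ by $-1$ on $u$), while the nontrivial skew-primitives of the pointed Hopf algebra $\kk G\ltimes E(n)$ are, up to multiplication by grouplikes, spanned by the $x_i$, which act by linearly independent operators since $x_i\cdot u=w_i$ are linearly independent; \cite[Corollary~5.4.7]{Montgomery} then shows every nonzero Hopf ideal acts nontrivially. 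Your worry that one needs a heavier filtration/PBW analysis of the smash product is unfounded once you observe that $\kk G\ltimes E(n)$ is pointed with coradical $\kk(G\times\langle g\rangle)$, so the cited corollary applies directly.
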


\begin{proof}
For the first statement, note that one can check directly that the ideal of relations of $E(n)$ is stable under the action of $GL_n(\kk)$. For the last statement, proceed as follows. Let $F=\kk(w_1,...,w_n)$, where $\{w_i\}$ are algebraically independent, 
which has an action of $G$ via the embedding of $G$ into $GL_n(\kk)$. Pick a non-square element $v \in (F^G)^{\times}$. Consider the $E(n)$-module field $L=F[u]/(u^2-v)$. 
Then, the actions of $G$ and of $E(n)$ on $L$ combine into an inner faithful action of $\kk G\ltimes E(n)$ on $L$.
\end{proof}

%%%%%%%%%%%%%%%%%%%%%%%%%%%%%%%%%%%%%

%%%%%%%%%%%%%%%%%%%%%%%%%%%%%%%%%%%%%

\subsection{On the generalized Taft algebras $T(n, m,\alpha)$ being Galois-theoretical}
\label{subsec:genTaft} 

Let $\alpha \in \kk$ and let $n,m$ be positive integers so that $m$ divides $n$. Let $q$ be a primitive $m$-th root of unity.
Consider the {\it generalized Taft algebra} $T(n, m, \alpha)$, which is a Hopf algebra generated by a grouplike element $g$ and $(g,1)$-skew primitive element $x$, subject to the relations
$$g^n=1, \quad x^m = \alpha(g^m -1),  \quad \quad gx = q xg.$$
So, $T(n,n, 0) = T(n)$ is a Taft algebra; see Section~\ref{subsec:T(n)}. The Galois-theoretical property of $T(n,m,\alpha)$ is given as follows.

\begin{proposition} \label{prop:genTaft}
A generalized Taft algebra $T(n,m,\alpha)$ is Galois-theoretical if and only if $m=n$, that is to say, if and only if $T(n,m, \alpha)$ is a Taft algebra $T(n)$.
\end{proposition}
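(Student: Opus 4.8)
The \emph{if} direction is immediate: when $m=n$ the relation $x^m=\alpha(g^m-1)$ becomes $x^n=\alpha(g^n-1)=0$, so $T(n,n,\alpha)=T(n)$ for every $\alpha$, and $T(n)$ is Galois-theoretical by Proposition~\ref{prop:T(n)}. For the converse I plan to show that $H:=T(n,m,\alpha)$ is \emph{not} Galois-theoretical whenever $2\le m<n$ (the case $m=1$ being degenerate, since then $q=1$ and $x=\alpha(g-1)$, so $H\cong\kk\mathbb{Z}_n$). Suppose, toward a contradiction, that $L$ is an inner faithful $H$-module field. Since $H$ is generated by the grouplike $g$ of order $n$ and the $(g,1)$-skew primitive $x$, it is pointed with $G(H)=\langle g\rangle\cong\mathbb{Z}_n$, and the Hopf subalgebra generated by $\{g,x\}$ is the finite dimensional algebra $H$ itself; hence Corollary~\ref{cor:extnGalois} (or Theorem~\ref{inva}) applies: $x$ acts by zero on $F:=L^H=L^{\langle g\rangle}$, and $L/F$ is a finite Galois extension with group $\mathbb{Z}_n$. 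Using that $(\mathbb{Z}/n)^\times\twoheadrightarrow(\mathbb{Z}/m)^\times$, choose the primitive $n$-th root $\zeta$ of Lemma~\ref{lem:Zn} so that $q=\zeta^{d}$, where $d:=n/m\ge 2$; the lemma then gives $L=\bigoplus_{i=0}^{n-1}L_{(i)}$ with $L_{(i)}=Fu^{i}$ for a fixed $u\in L_{(1)}$ (so $g\cdot u^{i}=\zeta^{-i}u^{i}$) and $u^{n}=v\in F^\times$, where $v$ is not an $n'$-th power in $F^\times$ for any $n'>1$ dividing $n$.

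The heart of the proof is to compute the operator $x^{m}$ on the $F$-line $L_{(1)}=Fu$ in two different ways. Because $x$ annihilates $F$ and $L$ is commutative, $x$ is $F$-linear; and from $gx=qxg=\zeta^{d}xg$ one checks that $x$ maps $L_{(i)}$ into $L_{(i-d)}$, so $x^{m}$ sends $Fu$ into $L_{(1-md)}=L_{(1)}$ and therefore acts on $Fu$ by multiplication by an element of $F$. First, $x^{m}=\alpha(g^{m}-1)$ as operators, so $x^{m}\cdot u=\alpha(\zeta^{-m}-1)\,u$. Second, writing $x\cdot u=a_{1}u^{\,n+1-d}$ with $a_{1}\in F$ and using the skew-Leibniz rule $x\cdot(ab)=(g\cdot a)(x\cdot b)+(x\cdot a)b$ to derive the $q$-Leibniz identity $x\cdot u^{k}=[k]_{\zeta^{-1}}\,u^{k-1}(x\cdot u)$, an induction that keeps track of the reductions modulo $u^{n}=v$ yields
\[
x^{m}\cdot u \;=\; a_{1}^{\,m}\left(\prod_{k=1}^{m-1}[j_k]_{\zeta^{-1}}\right) v^{\,m-1}\,u,
\]
where $j_k\in\{1,\dots,n-1\}$ denotes the residue of $1-kd$ modulo $n$, so that each $[j_k]_{\zeta^{-1}}=(\zeta^{-j_k}-1)/(\zeta^{-1}-1)\neq 0$.

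Comparing the two expressions and abbreviating $P:=\prod_{k=1}^{m-1}[j_k]_{\zeta^{-1}}\in\kk^\times$, we get $a_{1}^{\,m}\,P\,v^{\,m-1}=\alpha(\zeta^{-m}-1)$. If $\alpha=0$, the right-hand side vanishes, forcing $a_{1}=0$; then $x$ kills $L_{(1)}$ and, by the $q$-Leibniz identity, all of $L=\bigoplus_i Fu^{i}$, so $x$ acts by zero and the nonzero Hopf ideal it generates in $T(n,m,0)$ annihilates $L$ — contradicting inner faithfulness. If $\alpha\neq 0$, then $\zeta^{-m}\neq 1$ (as $0<m<n$), so $a_{1}\in F^\times$ and $v^{\,m-1}=c\,a_{1}^{-m}$ with $c:=\alpha(\zeta^{-m}-1)/P\in\kk^\times$. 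Choosing $s,t\in\mathbb{Z}$ with $s(m-1)+tm=1$ and using that $\kk$ is algebraically closed (so $c^{\,s}$ is an $m$-th power in $\kk^\times$), we obtain $v=(v^{m-1})^{s}(v^{m})^{t}=c^{\,s}\bigl(a_{1}^{-s}v^{\,t}\bigr)^{m}$, which exhibits $v$ as an $m$-th power in $F^\times$; since $m>1$ divides $n$, this contradicts Lemma~\ref{lem:Zn}(b). Either way we reach a contradiction, so no inner faithful $H$-module field exists.

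The step I expect to be the main obstacle is the inductive computation of $x^{m}\cdot u$: pinning down the residues $j_k$ and verifying that reducing the intermediate powers of $u$ modulo $u^{n}=v$ produces exactly $m-1$ factors of $v$ (alongside the nonzero scalar $a_{1}^{\,m}P$). Once that formula is established, the $F$-linearity of $x$, the grading-shift property, and the concluding number-theoretic extraction of an $m$-th power are all routine.
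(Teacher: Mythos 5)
Your argument is correct and follows essentially the same route as the paper's proof: decompose $L$ via Lemma~\ref{lem:Zn} and Theorem~\ref{inva}, compute $x^m\cdot u$ with the $q$-Leibniz rule, equate the result with $\alpha(g^m-1)\cdot u$, and conclude that $v$ would have to be an $m$-th power in $F^\times$, contradicting Lemma~\ref{lem:Zn}(b). The only differences are cosmetic: your eigenvalue convention makes the exponent of $u$ decrease at each step, producing the factor $v^{m-1}$ and the extra B\'ezout manipulation (the paper's convention raises the exponent and yields a single factor of $v$ directly), and your explicit treatment of the $\alpha=0$ case makes precise a point the paper leaves implicit when it asserts $r_0\in F^\times$.
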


\begin{proof}
If $m=n$, then $T(n,m,\alpha)=T(n)$, and is Galois-theoretical by Proposition~\ref{prop:T(n)}.

On the other hand, suppose $T(n,m,\alpha)$ is Galois-theoretical with inner faithful module field $L$. Since $T(n,m,\alpha)$ is generated by grouplike and skew primitive elements, $L^{T(n,m,\alpha)}$ =  
$L^{G(T(n,m,\alpha))}$ = $L^{\mathbb{Z}_n}$ by Theorem~\ref{inva}; let us denote this field by $F$. Then, $\mathbb{Z}_n = \langle g \rangle$ acts faithfully on $L$. By Lemma~\ref{lem:Zn}, $L = \bigoplus_{i=0}^{n-1} L_{(i)}$, where we can take $g \cdot r = \zeta^i r$ for all $r \in L_{(i)}$, with $\zeta$ is a primitive $n$-th root of unity such that  $q=\zeta^{n/m}$. We also have by Lemma~\ref{lem:Zn} that $L_{(0)} = F$ and $L = F[u]/(u^n-v)$ for $u \in L_{(1)}$ and $v$ a non-$n'$-th power in $F^{\times}$, for any $n'>1$ dividing $n$.

By way of contradiction, suppose that $n/m = s >1$, so that $T(n,m,\alpha)$ is not a Taft algebra.  Since $$g \cdot (x \cdot u) ~=~ qx \cdot (g \cdot u) ~=~ q\zeta x \cdot u ~=~ 
\zeta^{s+1}x \cdot u,$$ we get that $x \cdot u \in L_{(s+1)}$.
Now $x \cdot u = r_0 u^{s+1}$ for some $r_0 \in F^{\times}$. Let $[d]$ denote $ \frac{1-\zeta^{d}}{1-\zeta}$. Then,
$x \cdot u^d = [d] r_0 u^{s+d}$.
Therefore,
\[
\begin{array}{rl}
x^m \cdot u &= x^{m-1} \cdot r_0 u^{s+1}\\
                   &= x^{m-2} \cdot \left(r_0 (x \cdot u^{s+1})) ~~=~~ x^{m-2} \cdot ([s+1] r_0^2 u^{2s+1}\right)\\
                  &=x^{m-3} \cdot \left([s+1] [2s+1] r_0^3 u^{3s+1}\right)\\
& \vdots\\
&= [s+1] [2s+1] \cdots [(m-1)s+1] r_0^m uv,
\end{array}
\]
with $[\ell s+1] \neq 0$ for $\ell = 1, \dots, m-1$. On the other hand, $\alpha(g^m-1) \cdot u = \alpha(\zeta^{m}-1) u$. Using the relation $x^m  = \alpha(g^m-1)$, we get that
$$v = \frac{\alpha(\zeta^{m}-1)}{[s+1] [2s+1] \cdots [(m-1)s+1]}r_0^{-m}.$$
This yields a contradiction as $v$ is a non-$m$-th power in $F^{\times}$. Thus, $m=n$ as required.
\end{proof}

%%%%%%%%%%%%%%%%%%%%%%%%%%%%%%%%%%%%%

\subsection{On the book algebras $\mathbf{h}(\zeta,p)$ being Galois-theoretical} \label{subsec:book}
Let $p<n$ be coprime positive integers with $n \geq 2$ and let $\zeta$ be a primitive $n$-th root of unity. The {\it book algebra} $\mathbf{h}(\zeta,p)$ is a Hopf algebra 
 generated by a grouplike element $g$, a $(1,g)$-skew primitive element $x_1$, and a $(g^p,1)$-skew primitive element $x_2$, subject to relations: 
$$g^n=1, \quad x_1^n =x_2^n =0, \quad gx_1=\zeta x_1g, \quad gx_2=\zeta^p x_2g, \quad x_1x_2=x_2x_1;$$
 see \cite[Introduction]{AndSch:p3}. The Galois-theoretical property of $\mathbf{h}(\zeta,p)$ is given as follows.

\begin{proposition} \label{prop:book}
A book  algebra $\mathbf{h}(\zeta,p)$ is Galois-theoretical if and only if $p=1$. In this case, any $\mathbf{h}(\zeta,1)$-module field $L$ is a cyclic degree $n$ Galois extension of its subfield of invariants $L^{\mathbf{h}(\zeta,1)}$ as in Lemma~\ref{lem:Zn}.
\end{proposition}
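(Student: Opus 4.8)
The strategy mirrors the analysis of the generalized Taft algebras in Proposition~\ref{prop:genTaft}. First I would dispose of the easy direction: when $p=1$, the element $x_1$ is $(1,g)$-skew primitive and $x_2$ is $(g,1)$-skew primitive, so that $\mathbf{h}(\zeta,1)$ is generated by the grouplikes $G = \langle g\rangle \cong \mathbb{Z}_n$ together with skew primitive elements of the type covered by Corollary~\ref{cor:extnGalois}; I would exhibit an inner faithful action on a field (e.g.\ on $\kk(z)$ with $g\cdot z = \zeta^{-1} z$, $x_1\cdot z = $ something annihilated appropriately, $x_2\cdot z = 1$, after checking the relation $x_1 x_2 = x_2 x_1$), conclude Galois-theoreticality via Lemma~\ref{lem:Skryabin}, and then invoke Theorem~\ref{inva} and Lemma~\ref{lem:Zn} to get that any module field $L$ is a cyclic degree $n$ Galois extension of $L^{\mathbf{h}(\zeta,1)}$ with group $\mathbb{Z}_n = G(\mathbf{h}(\zeta,1))$, of the form $F[u]/(u^n - v)$.

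\textbf{The forward direction.} For the converse, suppose $\mathbf{h}(\zeta,p)$ acts inner faithfully on a field $L$; since the algebra is generated by grouplike and skew primitive elements, Theorem~\ref{inva}(i) gives $L^{\mathbf{h}(\zeta,p)} = L^{\mathbb{Z}_n} =: F$, and $\mathbb{Z}_n = \langle g\rangle$ acts faithfully on $L$, so by Lemma~\ref{lem:Zn} we have $L = \bigoplus_{i=0}^{n-1} L_{(i)}$ with $L = F[u]/(u^n - v)$ for $u \in L_{(1)}$ and $v$ a non-$n'$-th power in $F^\times$ for all $n' > 1$ dividing $n$. The plan is to compute $x_1 \cdot u$ and $x_2 \cdot u$ using the eigenvalue bookkeeping: from the commutation relations one gets $g\cdot(x_1\cdot u) = \zeta^{-1} x_1\cdot(g\cdot u)$ and similarly for $x_2$, so $x_1 \cdot u$ and $x_2 \cdot u$ land in specific graded pieces $L_{(a)}$, hence are of the form $r u^a$ for $r \in F$. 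Then I would iterate, computing $x_1^n \cdot u$ and $x_2^n \cdot u$ in terms of $v$ and a product of $q$-integer-type coefficients $[d] = \frac{1-\zeta^d}{1-\zeta}$ (or the analogous coefficients governed by $\zeta^p$), exactly as in the generalized Taft computation. The relations $x_1^n = x_2^n = 0$ then force certain of these coefficients to vanish; when $p \neq 1$ I expect a coefficient $[\ell p + \text{(something)}]$ or a power-of-$\zeta$ obstruction that cannot vanish (since $\gcd(p,n)=1$), yielding a contradiction with the fact that $v$ is not a proper power in $F^\times$, unless $x_1 \cdot u$ or $x_2 \cdot u$ is forced to be zero in a way that then contradicts inner faithfulness. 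One must be careful: unlike the Taft case, both $x_1$ and $x_2$ are nilpotent (rather than satisfying $x^m = \alpha(g^m-1)$), so the contradiction should come from the interaction of the two skew primitives and the coprimality of $p$ and $n$, rather than from a single nonzero right-hand side.

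\textbf{Main obstacle.} The delicate point is pinning down exactly where $x_1 \cdot u$ and $x_2 \cdot u$ live and ruling out the degenerate possibility that one of them acts trivially on all of $L$ — because if, say, $x_2 \cdot u = 0$ and $x_2 \cdot F = 0$, then $x_2$ acts by zero and inner faithfulness fails (a nonzero Hopf ideal would be killed), so that case is handled; the genuine work is showing that when both skew primitives act nontrivially, the exponents forced by the eigenvalue conditions together with $x_i^n = 0$ are incompatible with $v$ being a non-proper-power unless $p=1$. I would expect to split into the case $x_2 \cdot u \neq 0$ (which pushes $u$ into $L_{(1+p-1)} = L_{(p)}$-type pieces after accounting for signs, forcing a relation among powers of $u$ that contradicts irreducibility of $t^n - v$ when $p \neq 1$) and the case $x_2 \cdot u = 0$ but $x_2$ nonzero on $F$ (where one reruns the argument with $F$ in place of $L$, or derives a contradiction with $x_1 x_2 = x_2 x_1$). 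Assembling these cases into a clean dichotomy is the part that will require the most care, but it is a finite computation of the same flavor as Proposition~\ref{prop:genTaft}.
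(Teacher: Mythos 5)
Your setup for the forward direction is correct and matches the paper's: $F:=L^{\mathbf{h}(\zeta,p)}=L^{\mathbb{Z}_n}$ by Theorem~\ref{inva}, $L=F[u]/(u^n-v)$ by Lemma~\ref{lem:Zn}, the eigenvalue bookkeeping puts $x_1\cdot u$ in $F$ (so, after rescaling $u$, $x_1\cdot u=1$) and $x_2\cdot u$ in $L_{(1-p)}$ (not $L_{(p)}$, but that is a sign slip), and your disposal of the degenerate case $x_2\cdot u=0$ via inner faithfulness is exactly what is needed to get $x_2\cdot u=r_0u^{1-p}$ with $r_0\in F^\times$. The $p=1$ direction is also fine (the paper uses $g\cdot z=\zeta^{-1}z$, $x_1\cdot z=x_2\cdot z=1$ on $\kk(z)$).

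However, the engine you propose for the contradiction — computing $x_1^n\cdot u$ and $x_2^n\cdot u$ and playing the relations $x_1^n=x_2^n=0$ against the fact that $v$ is not a proper power — provably yields nothing, for any $p$ coprime to $n$. Indeed, with $x_1\cdot u^d=[d]\,u^{d-1}$ and $x_2\cdot u^d=[d]_{\zeta^{-p}}\,r_0u^{d-p}$ (where $[d]_{\zeta^{-p}}=1+\zeta^{-p}+\cdots+\zeta^{-p(d-1)}$), one gets
\[
x_1^n\cdot u^d=\Bigl(\textstyle\prod_{k=0}^{n-1}[d-k]\Bigr)u^{d-n},\qquad
x_2^n\cdot u^d=r_0^n\Bigl(\textstyle\prod_{k=0}^{n-1}[d-kp]_{\zeta^{-p}}\Bigr)u^{d-np},
\]
and in each product the index $k$ runs over a full residue system mod $n$, so exactly one factor vanishes ($[e]=0$ iff $n\mid e$, and $[e]_{\zeta^{-p}}=0$ iff $n\mid e$ since $\gcd(p,n)=1$). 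Thus $x_1^n$ and $x_2^n$ act by zero on $L$ automatically, for every coprime $p$; the nilpotency relations cannot distinguish $p=1$ from $p\neq 1$. The relation that does the work is the one you mention only parenthetically: $x_1x_2=x_2x_1$. Applying it to $u$ gives
\[
0=(x_1x_2-x_2x_1)\cdot u=r_0\,x_1\cdot u^{1-p}=r_0v^{-1}\bigl(1+\zeta^{-1}+\cdots+\zeta^{-(n-p)}\bigr)u^{n-p},
\]
and since $r_0\neq 0$ this forces $\zeta^{-(n-p+1)}=1$, i.e.\ $n\mid 1-p$, i.e.\ $p=1$. So the missing idea is not a delicate case analysis but the identification of the commutation relation as the sole source of the obstruction; as written, your main computation would terminate without a contradiction.
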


\begin{proof}
If $p=1$, then $\mathbf{h}(\zeta,p)$ is Galois-theoretical since it acts inner faithfully on the commutative domain $\kk[z]$ and field $\kk(z)$ by 
$$g \cdot z = \zeta^{-1} z, \quad x_1 \cdot z =1, \quad x_2 \cdot z =1.$$
To see inner faithfulness, note that any nonzero Hopf ideal of $\mathbf{h}(\zeta,1)$ contains either $x_1$ or $x_2$ \cite[Corollary~5.4.7]{Montgomery}.

Suppose $\mathbf{h}(\zeta,p)$ is Galois-theoretical with module field $L$. Since $\mathbf{h}(\zeta,p)$ is generated by grouplike and skew primitive elements, $L^{\mathbf{h}(\zeta,p)} = L^{G(\mathbf{h}(\zeta,p))}= L^{\mathbb{Z}_n}$ by Theorem~\ref{inva}; let us denote this field by $F$. Then, $\mathbb{Z}_n$ acts faithfully on $L$. By Lemma~\ref{lem:Zn}, $L = \bigoplus_{i=0}^{n-1} L_{(i)}$, where $g \cdot r= \zeta^{-i} r$ for all $r \in L_{(i)}$.  We also have by Lemma~\ref{lem:Zn} that $L_{(0)} = F$ and $L = F[u]/(u^n-v)$ for $u \in L_{(1)}$ and $v$ a non-$n'$-th power in $F^{\times}$, for any $n'>1$ dividing $n$. 

Since 
$g \cdot (x_1 \cdot u) = \zeta x_1 \cdot (g \cdot u) = x_1 \cdot u,$
we get that $x_1 \cdot u \in F$ and we can renormalize to assume that $x_1 \cdot u =1$.  We also get that
$$x_1 \cdot u^d = (1+\zeta^{-1} + \cdots + \zeta^{-(d-1)})u^{d-1},$$
for all $d \geq 1$. Moreover, 
$g \cdot (x_2 \cdot u) = \zeta^p x_2 \cdot (g \cdot u) = \zeta^{p-1} (x_2 \cdot u),$
so we get that $x_2 \cdot u \in L_{(1-p)}$. Hence, $x_2 \cdot u = r_0 u^{1-p}$ for $r_0 \in F^{\times}$. 
Now, 
\[
\begin{array}{rl}
0~=~(x_1x_2-x_2x_1) \cdot u &= x_1 \cdot  (r_0 u^{1-p}) - x_2 \cdot 1\\
&=  r_0 (x_1 \cdot u^{1-p}) ~=~ r_0v^{-1}(x_1\cdot u^{n+1-p})\\
&= r_0 \left(1 + \zeta^{-1} + \cdots + \zeta^{-(n-p)}\right)u^{n-p}.
\end{array}
\]
So, $1 + \zeta^{-1} + \cdots + \zeta^{-(n-p)} = 0$, which implies that $p = 1$. 

For any $\mathbf{h}(\zeta,1)$-module field $L$, we have that the structure of $L$ is as described in Lemma~\ref{lem:Zn}.
\end{proof}

%%%%%%%%%%%%%%%%%%%%%%%%%%%%%%%%%%%%%%

\subsection{The Hopf algebra $H_{81}$ is Galois-theoretical} \label{subsec:H81}
Let $\omega$ be a primitive cube root of unity. Let $H_{81}$ denote the 81-dimensional Hopf algebra from \cite[p.~1544]{Nichols}; see also \cite[Theorems~3.6 and~3.7]{AndSch:p4}. It is generated by a grouplike element $g$ and $(g,1)$-skew primitive elements $x, y$, subject to relations:
\[
\begin{array}{c}
g^3=1, \quad gx = \omega xg, \quad gy=\omega yg, \quad x^3=0, \quad y^3 = 0,\\ x^2y+xyx+yx^2 = 0, \quad y^2x  +yxy + xy^2 =0, \quad (xy-\omega yx)^3=0.
\end{array}
\]
Note that the relation $(xy-\omega yx)^3=0$ is accidentally omitted in \cite[p.~1544]{Nichols}.

\begin{proposition} \label{prop:H81}
The Hopf algebra $H_{81}$ is Galois-theoretical and the fields $L$ that admit an inner faithful $H_{81}$-action are precisely of the form $$L = F[u]/(u^3-v)$$ for $F  = L^{H_{81}}$, $u \in L_{(1)}$, and $v$ a non-cube element of $F^{\times}$. So, $L$ is a cyclic, degree~3 Galois extension of its subfield of invariants $F$ with Galois group $\mathbb{Z}_3$.
We have that
$g \cdot u = \omega^{-1}u$, $x\cdot u = w_1$, $y \cdot u = w_2 \in F$ for $w_1, w_2 \in F$ linearly independent over $\kk$. Here, $g \cdot r_0 = r_0, ~ x \cdot r_0  = y \cdot r_0= 0$ for all $r_0 \in F$. 
\end{proposition}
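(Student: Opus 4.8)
\textbf{Proof strategy for Proposition~\ref{prop:H81}.}

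The plan is to mimic the proofs of Propositions~\ref{prop:T(n)} and~\ref{prop:book}, exploiting that $G(H_{81}) \cong \mathbb{Z}_3 = \langle g \rangle$ and that $H_{81}$ is generated by grouplike and skew-primitive elements, so Theorem~\ref{inva} applies. First I would show $H_{81}$ is Galois-theoretical by exhibiting an explicit inner faithful action on $\kk(z)$: set $g \cdot z = \omega^{-1} z$, $x \cdot z = 1$, $y \cdot z = z^3$ (so that both $x$ and $y$ kill $\kk[z^3]$ and the action descends from $\kk[z]$ to $\kk(z) = \kk[z] \otimes_{\kk[z^3]} \kk(z^3)$, as in Section~\ref{subsec:T(n)}). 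One must verify that this assignment respects all the defining relations of $H_{81}$ and is inner faithful; for inner faithfulness, invoke \cite[Corollary~5.4.7]{Montgomery} to see that any nonzero Hopf ideal meets $\mathrm{span}_\kk(x,y)$, then check that $x$ and $y$ act by $\kk$-linearly independent operators (e.g.\ compare their values on $z$ and $z^2$).

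For the classification, let $L$ be any inner faithful $H_{81}$-module field and set $F = L^{H_{81}}$. By Theorem~\ref{inva}, $F = L^{\mathbb{Z}_3}$, and $\mathbb{Z}_3$ acts faithfully on $L$; Lemma~\ref{lem:Zn} then gives $L = \bigoplus_{i=0}^2 L_{(i)}$ with $L_{(0)} = F$, $L_{(1)} \neq 0$, and $L = F[u]/(u^3 - v)$ for any nonzero $u \in L_{(1)}$, with $v$ a non-cube in $F^\times$. Now compute eigenvalues: since $g \cdot (x \cdot u) = \omega\, x \cdot (g \cdot u) = \omega \cdot \omega^{-1}\, x\cdot u = x \cdot u$, we get $x \cdot u \in F$, and likewise $y \cdot u \in F$; call these $w_1, w_2$. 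Also $x \cdot r_0 = y \cdot r_0 = 0$ for $r_0 \in F$ since $\varepsilon(x) = \varepsilon(y) = 0$ and $F = L^{H_{81}}$. The remaining point is to show $w_1, w_2$ can be taken linearly independent over $\kk$ (equivalently, that neither is a $\kk$-multiple of the other after rescaling $u$), which is exactly the inner-faithfulness condition on $L$: by \cite[Corollary~5.4.7]{Montgomery} the action is inner faithful iff $x$ and $y$ act by linearly independent transformations, and since on the one-dimensional-over-$F$ piece $L_{(1)} = Fu$ they act as multiplication by $w_1/u$-type data, one reduces to checking that $w_1, w_2$ are $\kk$-independent (they determine the action on all of $L$ via the Leibniz rule, e.g.\ $x \cdot u^d = (1 + \omega^{-1} + \cdots + \omega^{-(d-1)}) w_1 u^{d-1}$ and similarly for $y$). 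Conversely, given $F \supset \kk$, a non-cube $v \in F^\times$, and $\kk$-independent $w_1, w_2 \in F$, one must check that setting $x \cdot u = w_1$, $y \cdot u = w_2$ (and extending by the module-algebra axioms) genuinely satisfies the three Nichols-algebra relations $x^3 = y^3 = 0$, $x^2y + xyx + yx^2 = 0$, $y^2x + yxy + xy^2 = 0$, $(xy - \omega yx)^3 = 0$ as operators on $L$.

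I expect the main obstacle to be precisely this last consistency check: verifying that the quantum-Serre-type relations and the cubic relation $(xy - \omega yx)^3 = 0$ hold as operators on $L = F[u]/(u^3-v)$ for the action determined by arbitrary $w_1, w_2$. Since $L$ is three-dimensional over $F$ with basis $1, u, u^2$, this is a finite computation — one writes out how $x, y$ act on each basis vector (using $x\cdot u^2 = (1+\omega^{-1})w_1 u$, $x \cdot u^3 = x \cdot v = 0$, etc.) and checks each relation on basis vectors — but it is the part that genuinely uses the specific structure of $H_{81}$ rather than just the $\mathbb{Z}_3$-grading, and care is needed with the $\omega$-coefficients (e.g.\ confirming $1 + \omega^{-1} + \omega^{-2} = 0$ is what makes $x^3 = 0$ automatic while the Serre relations force no further constraint on $w_1, w_2$). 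The Galois statement then follows immediately from Theorem~\ref{inva}(ii).
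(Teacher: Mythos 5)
Your proposal is correct and follows essentially the same route as the paper: exhibit the explicit action $g \cdot z = \omega^{-1}z$, $x \cdot z = 1$, $y \cdot z = z^3$ on $\kk[z]$ (hence on $\kk(z)$), then classify module fields via Theorem~\ref{inva} and Lemma~\ref{lem:Zn} exactly as for $E(n)$, with inner faithfulness equivalent to the $\kk$-linear independence of $w_1, w_2$. The one place where you make heavier weather of things than necessary is the step you single out as ``the main obstacle'': verifying the degree-$3$ relations $x^3 = y^3 = 0$, the Serre-type relations, and $(xy-\omega yx)^3=0$. The paper disposes of all of these in one line: every monomial in $x,y$ of degree $\geq 3$ acts by zero. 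The structural reason (valid both on $\kk[z]$ and on a general $L = F[u]/(u^3-v)$ with $x\cdot u, y\cdot u \in F$) is that $x$ and $y$ each shift the $\mathbb{Z}_3$-grading $L = \bigoplus_{i=0}^2 L_{(i)}$ down by one and both annihilate $L_{(0)}$, so applying any three of them in succession must at some step act on $L_{(0)}$ and hence give zero; no tracking of $\omega$-coefficients or case-by-case basis computation is needed, and no further constraint on $w_1, w_2$ arises. With that observation your outline closes up into a complete proof.
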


\begin{proof}
Applying Lemma~\ref{lem:Skryabin}, we have that $H_{81}$ is Galois-theoretical as it acts on $\kk[z]$ inner faithfully by
$$ 
g \cdot z = \omega^{-1} z, \quad x \cdot z = 1, \quad y \cdot z = z^3.
$$
(One also gets that $H_{81}$ acts inner faithfully on $\kk(z)$ by the same action.)
Indeed, it is clear that $g^3-1$, $gx - \omega xg$, and $gy-\omega yg$ act on $\kk[z]$ by zero. For the rest of the relations, note that any monomial 
in $x,y$ of degree $\ge 3$ acts by zero in $\kk[z]$.
To determine $H_{81}$-module fields $L$, 
first observe that $G(H_{81}) = \mathbb{Z}_3$. By an argument similar to that in Section~\ref{subsec:E(n)}, the result holds.
\end{proof}

We also have the following generalization of Proposition \ref{prop:H81}.

\begin{proposition} \label{prop:H81skewprod} Retain the notation above.
Let $G$ be a finite subgroup of $GL_2(\kk)$.  Then one can form 
the semi-direct product $\kk G \ltimes H_{81}$  where $GL_2(\kk)$ acts on $H_{81}$ by linear transformations of the skew primitive elements $x$ and $y$.
Moreover, $\kk G\ltimes H_{81}$ is Galois theoretical. 
\end{proposition}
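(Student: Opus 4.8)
The plan is to mimic the proof of Proposition~\ref{prop:E(n)skewprod}, adapting it to the $H_{81}$ setting. First I would verify that the semi-direct product $\kk G \ltimes H_{81}$ is a well-defined Hopf algebra; for this it suffices to check that $GL_2(\kk)$ acts on $H_{81}$ by Hopf algebra automorphisms when it acts on $\mathrm{span}_{\kk}(x,y)$ by linear substitutions. The grouplike relation $g^3 = 1$ and the commutation relations $gx = \omega xg$, $gy = \omega yg$ are manifestly $GL_2(\kk)$-stable since $g$ is fixed and the scalar $\omega$ is the same for both $x$ and $y$. The remaining relations $x^3 = y^3 = 0$, $x^2y + xyx + yx^2 = 0$, $y^2x + yxy + xy^2 = 0$, and $(xy - \omega yx)^3 = 0$ together cut out the ideal of relations of the Nichols algebra $\mathfrak{B}(V)$ with $V = \mathrm{span}_{\kk}(x,y)$ of the given diagonal braiding; this ideal is intrinsic to $V$ and its braiding, hence stable under any braided-vector-space automorphism of $V$, and in particular under $GL_2(\kk)$ (which permutes the weight spaces appropriately since all of $x,y$ have the same $g$-eigenvalue $\omega$). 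A direct check of these relations under a general substitution $x \mapsto ax + by$, $y \mapsto cx + dy$ is also routine.

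Next I would construct the inner faithful module field. Following the proof of Proposition~\ref{prop:E(n)skewprod}, set $F = \kk(w_1, w_2)$ with $w_1, w_2$ algebraically independent over $\kk$, on which $G$ acts linearly through its embedding $G \hookrightarrow GL_2(\kk)$. Pick a non-cube element $v \in (F^G)^{\times}$ (such exists since $F^G$ is a field properly containing $\kk$, as $G$ is finite and $F$ has transcendence degree $2$), and form $L = F[u]/(u^3 - v)$, which is a cyclic degree-$3$ field extension of $F$. We extend the $G$-action to $L$ by declaring $u$ to be $G$-invariant (this is consistent because $v \in F^G$), and we let $H_{81}$ act on $L$ exactly as in the proof of Proposition~\ref{prop:H81}: namely $g \cdot u = \omega^{-1} u$, $x \cdot u = w_1$, $y \cdot u = w_2$, and $g,x,y$ act on $F$ by $g \cdot r_0 = r_0$, $x \cdot r_0 = y \cdot r_0 = 0$.

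The heart of the argument — and the step I expect to be the main obstacle — is checking that the $G$-action and the $H_{81}$-action on $L$ are compatible, i.e.\ that together they define an action of the semi-direct product $\kk G \ltimes H_{81}$. Concretely, for $\gamma \in G$ and $h \in \{g, x, y\}$ one must verify $\gamma \cdot (h \cdot \xi) = (\gamma \triangleright h) \cdot (\gamma \cdot \xi)$ for all $\xi \in L$, where $\gamma \triangleright h$ denotes the $GL_2(\kk)$-action on $H_{81}$. The case $h = g$ is immediate since $g$ is $G$-fixed and acts by the scalar $\omega^{-1}$ on $u$ and trivially on $F$. For $h = x$ (and similarly $h = y$): on $u$ we need $\gamma \cdot (x \cdot u) = \gamma \cdot w_1$ to equal $(\gamma \triangleright x) \cdot (\gamma \cdot u) = (\gamma \triangleright x) \cdot u$; writing $\gamma$ as the matrix $\begin{pmatrix} a & b \\ c & d \end{pmatrix}$ acting on $(w_1, w_2)$ and correspondingly $\gamma \triangleright x = ax + cy$ (or the appropriate transpose convention), both sides equal $a w_1 + c w_2$, so the conventions match up once fixed consistently. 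On $F$ both sides vanish since $x \cdot F = 0$ and $\gamma \triangleright x$ is still a combination of $x, y$ which annihilate $F$; one then extends to all of $L$ using that both $\gamma \cdot (-)$ and $(\gamma \triangleright h)\cdot(-)$ respect the module-algebra structure and agree on the algebra generators $F$ and $u$. The same verification handles the skew-primitive coproduct compatibility. Finally, inner faithfulness follows as in Propositions~\ref{prop:H81} and~\ref{prop:H81skewprod}: the restriction of the action to $H_{81}$ is already inner faithful because $w_1, w_2$ are linearly independent over $\kk$ (by the argument of Proposition~\ref{prop:H81}, using \cite[Corollary~5.4.7]{Montgomery}), hence a fortiori the full action of $\kk G \ltimes H_{81}$ is inner faithful, so this Hopf algebra is Galois-theoretical.
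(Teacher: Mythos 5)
Your construction coincides with the paper's (which itself just says ``adapt the proof of Proposition~\ref{prop:E(n)skewprod}''): check that the ideal of relations of $H_{81}$ is $GL_2(\kk)$-stable, set $F=\kk(w_1,w_2)$ with $G$ acting through its embedding into $GL_2(\kk)$, pick a non-cube $v\in (F^G)^{\times}$, and combine the $G$-action with the $H_{81}$-action on $L=F[u]/(u^3-v)$ given by $x\cdot u=w_1$, $y\cdot u=w_2$; your compatibility check $\gamma\cdot(h\cdot\xi)=(\gamma\triangleright h)\cdot(\gamma\cdot\xi)$ is exactly the point the paper leaves implicit. The one step that is not right as written is the last one: inner faithfulness of the restriction to the Hopf subalgebra $H_{81}$ does \emph{not} imply inner faithfulness of the full $\kk G\ltimes H_{81}$-action ``a fortiori'' --- the implication of Proposition~\ref{prop:GTprelim}(c) goes in the opposite direction, and the claim is false in general (if $G$ acted trivially on $L$, the Hopf ideal generated by $\gamma-1$, $\gamma\in G$, would annihilate $L$ while the $H_{81}$-action remained inner faithful). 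To close this, use that $\kk G\ltimes H_{81}$ is pointed (it is generated by grouplikes and skew primitives, Lemma~\ref{lem:pointed}(b)), so by \cite[Corollary~5.4.7]{Montgomery} a nonzero Hopf ideal must contain either a difference of distinct grouplikes --- impossible since the group $G\times\langle g\rangle$ acts faithfully on $L$ ($G$ faithfully on $F$, and $g$ by $\omega^{-1}$ on $u$) --- or a nontrivial skew primitive, which up to grouplike factors and scalars is a nonzero combination $\alpha x+\beta y$ and acts on $u$ by $\alpha w_1+\beta w_2\neq 0$. This is a local repair, not a change of strategy, so apart from that slip your argument is the paper's.
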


\begin{proof}
For the first statement, one can check  that the ideal of relations of $H_{81}$ is stable under the action of $GL_2(\kk)$. To get the last statement, adapt the proof of Proposition~\ref{prop:E(n)skewprod}.
\end{proof}

%%%%%%%%%%%%%%%%%%%%%%%%%%%%%%%%%%%%%

\subsection{The Hopf algebra $u_q(\mf{sl}_2)$ is Galois-theoretical, but gr($u_q(\mf{sl}_2)$) is not} \label{subsec:uqsl2} 

Let $m \geq 2$ and let $q$ be a root of unity in $\kk$ with ord($q^2$)=$m$. Consider the $m^3$-dimensional Hopf algebra $H_{\lambda}$,  generated by a grouplike element $k$, a $(k,1)$-skew primitive element $e$, and a $(1,k^{-1})$-skew primitive element $f$. Let $H_{\lambda}$ have relations:
$$ef-fe = \lambda \frac{k-k^{-1}}{q-q^{-1}}, \quad ke= q^2 ek, 
\quad kf = q^{-2}fk, \quad e^{m} = f^{m}=0, \quad k^{m} =1.$$

Note that if $\lambda \neq 0$, then $H_{\lambda} \cong u_q(\mf{sl}_2)$, and without loss of generality we can take $\lambda =1$ in this case. Otherwise, $H_{\lambda = 0}$ is isomorphic to the associated graded Hopf algebra gr($u_q(\mf{sl}_2)$). Part (b) of the result below recovers \cite[Corollary~3.7]{MontgomerySchneider}.

\begin{proposition} \label{prop:uqsl2} We have the following statements.
\begin{enumerate}
\item The associated graded Hopf algebra gr($u_q(\mf{sl}_2)$) is not Galois-theoretical. 
\item The Hopf algebra $u_q(\mf{sl}_2)$ is Galois-theoretical and the fields $L$ that admit an inner faithful $u_q(\mf{sl}_2)$-action are precisely of the form $L = F[u]/(u^{m}-v)$ for $F  = L^{u_q(\mf{sl}_2)}$, $u \in L_{(1)}$, and $v$ is a non-$m'$-th root in $F^{\times}$, for any $m'>1$ dividing $m$. In other words, $L$ is a cyclic degree $m$ Galois extension of its subfield of invariants $F$ with Galois group $\mathbb{Z}_m$.
Moreover, we have that
\[
\begin{array}{lllll}
e \cdot u = 1,  && f \cdot u =-qu^2, && k \cdot u =q^{-2}u,
\end{array}
\]
and $e \cdot r_0= f \cdot r_0 = 0$, ~$k \cdot r_0 = r_0$
for all $r_0 \in F$.
\end{enumerate}
\end{proposition}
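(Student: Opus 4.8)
The plan is to handle the two parts separately, using Lemma~\ref{lem:Skryabin} and Lemma~\ref{lem:Zn} as the main tools, together with the fact (Theorem~\ref{inva}) that for a pointed Hopf algebra generated by grouplikes and skew primitives the invariants $L^H$ coincide with $L^G$. For part (b), the strategy is essentially identical to the proof of Proposition~\ref{prop:T(n)}, but with extra bookkeeping because there are now two skew primitive generators $e, f$ that must be matched up compatibly. For part (a), the strategy is to derive a contradiction: if $\text{gr}(u_q(\mf{sl}_2)) = H_{\lambda=0}$ acted inner faithfully on a field, then the commutativity-like structure forced by $\lambda = 0$ would collide with what Lemma~\ref{lem:Zn} says about the $\mathbb{Z}_m$-grading of an inner faithful $\mathbb{Z}_m$-module field.

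\textbf{Part (b): $u_q(\mf{sl}_2)$ is Galois-theoretical.}
First I would produce the inner faithful action on a commutative domain, so that Lemma~\ref{lem:Skryabin} gives the extension to a field. The natural candidate is $\kk[z]$ with $k\cdot z = q^{-2} z$, $e \cdot z = 1$, $f\cdot z = -q z^2$; one checks the module-algebra axioms against the defining relations of $H_1 = u_q(\mf{sl}_2)$, the only delicate one being $ef - fe = \frac{k - k^{-1}}{q - q^{-1}}$, which should hold precisely because the constants $1$ and $-q$ were chosen to make it work, and $e^m = f^m = 0$ because any monomial of degree $\ge m$ in $e$ (resp.\ $f$) acts by zero on $\kk[z]$. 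Inner faithfulness follows since a nonzero Hopf ideal must meet $\text{span}_\kk(e,f)$ plus the grouplike part, and the action of $k, e, f$ is visibly nontrivial (e.g.\ $k$ acts with order $m$, using $\text{ord}(q^2) = m$). Then, to classify the module fields, I would apply Lemma~\ref{lem:Zn} to $G(u_q(\mf{sl}_2)) = \langle k\rangle \cong \mathbb{Z}_m$: we get $L = \bigoplus_{i=0}^{m-1} L_{(i)}$ with $L_{(0)} = L^{\mathbb{Z}_m} = L^{u_q(\mf{sl}_2)} =: F$ (the middle equality by Theorem~\ref{inva}(i)) and $L = F[u]/(u^m - v)$ for $u \in L_{(1)}$ and $v$ a non-$m'$-th power for all $m' > 1$ dividing $m$. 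It remains to pin down the action on $u$. Since $k\cdot(e\cdot u) = q^2 e\cdot(k\cdot u) = e\cdot u$, the element $e\cdot u$ is $k$-invariant, hence lies in $F^\times$ (nonzero by inner faithfulness — if $e$ killed $L_{(1)}$ it would kill all of $L$, contradicting inner faithfulness); renormalizing $u$ we may take $e\cdot u = 1$, and $e\cdot r_0 = \varepsilon(e) r_0 = 0$ for $r_0 \in F$. Similarly $k\cdot(f\cdot u) = q^{-2} f\cdot(k\cdot u) = q^{-4}(f\cdot u)$, so $f\cdot u \in L_{(2)}$, i.e.\ $f\cdot u = r_0 u^2$ for some $r_0 \in F$; evaluating the relation $ef - fe = \frac{k-k^{-1}}{q-q^{-1}}$ on $u$ (using $e\cdot u^2 = (1+q^{-2})u$, which follows from $e$ being a skew derivation relative to $k$) forces $r_0 = -q$. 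Finally the Galois statement is Theorem~\ref{inva}(ii).

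\textbf{Part (a): $\text{gr}(u_q(\mf{sl}_2))$ is not Galois-theoretical.}
Suppose $H_{0}$ acted inner faithfully on a field $L$. Run the same analysis: $G = \langle k\rangle \cong \mathbb{Z}_m$ acts faithfully, $F = L^{H_0} = L^{\mathbb{Z}_m}$, $L = F[u]/(u^m - v)$ with $v$ a non-$m'$-th power, $e\cdot u = 1$ after renormalization, and $f\cdot u = r_0 u^2$ for some $r_0 \in F$. Now evaluate the relation $ef - fe = 0$ on $u$: we get $e\cdot(r_0 u^2) - f\cdot 1 = r_0(1 + q^{-2}) u - 0 = 0$, so either $r_0 = 0$ or $1 + q^{-2} = 0$; the latter would force $q^2 = -1$, contradicting $\text{ord}(q^2) = m \ge 2$ unless $m = 2$ — this is the point that needs care. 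When $m = 2$ one argues differently: then $q^2 = -1$ is allowed, but now use $f\cdot u = r_0 u^2$ together with $f^2\cdot u = 0$: compute $f^2\cdot u = f\cdot(r_0 u^2) = r_0 (f\cdot u^2) = r_0 \cdot r_0(1 + q^{-2})\cdot(\text{something})$, and more importantly use $e^m\cdot u^k$ and the full relation set to show $r_0 = 0$; alternatively, from $r_0 = 0$ in all cases one concludes $f\cdot u = 0$, hence $f$ kills $L_{(1)}$ and therefore (being a skew derivation) kills all of $L$, so $f$ acts by zero — but then the Hopf ideal $(f)$ annihilates $L$, contradicting inner faithfulness since $(f)$ is a nonzero Hopf ideal of $H_0$. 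The main obstacle I anticipate is exactly this case analysis around whether $1 + q^{-2}$ can vanish and cleanly reducing every branch to ``$f$ acts by zero, contradicting inner faithfulness''; one may need to invoke that a nonzero Hopf ideal of $H_0$ generated by the skew primitive $f$ is genuinely nonzero and proper, which follows from the structure of $H_0$ as a bosonization. Once that is in hand, part (a) is complete.
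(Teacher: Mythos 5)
Your argument follows the paper's proof essentially step for step: for (b), the explicit action on $\kk[z]$ extended via Lemma~\ref{lem:Skryabin}, then Lemma~\ref{lem:Zn} and Theorem~\ref{inva} to get $L=F[u]/(u^m-v)$, then the $k$-eigenvalue computation placing $e\cdot u\in F$ and $f\cdot u\in L_{(2)}=Fu^2$, and finally the commutator relation evaluated on $u$ to force $f\cdot u=-qu^2$; for (a), the same setup with $ef=fe$ forcing $r_0(1+q^{-2})=0$ and hence $f\cdot u=0$, contradicting inner faithfulness. Your justification that $e\cdot u\neq 0$ (otherwise the skew derivation $e$ kills all of $L=F[u]$ and the Hopf ideal $(e)$ annihilates $L$) is actually spelled out more carefully than in the paper, which simply asserts $r_0\in F^\times$.

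The one genuine problem is the $m=2$ branch of part (a), which you correctly flag as ``the point that needs care'' but then wave at with ``use $e^m\cdot u^k$ and the full relation set to show $r_0=0$.'' No such argument exists, because the statement is false when $m=2$: there $q^2=-1$, so $k=k^{-1}$ and the relation $ef-fe=\lambda\frac{k-k^{-1}}{q-q^{-1}}$ degenerates to $ef=fe$ for every $\lambda$; substituting $f'=kf$ (which is $(k,1)$-skew primitive, with $f'^2=0$ and $ef'=-f'e$) exhibits $H_\lambda\cong E(2)$ as a Hopf algebra, and $E(2)$ \emph{is} Galois-theoretical by Proposition~\ref{prop:E(n)}. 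So the proposition must be read with $m\geq 3$, i.e.\ $q+q^{-1}\neq 0$ --- which is the paper's standing convention \eqref{eq:order} even though the local hypothesis says $m\geq 2$. (The paper's own proof has the same blind spot: the displayed formula $r_0=\lambda\frac{q^{-2}-q^2}{(q-q^{-1})(q^{-2}+1)}$ divides by $q^{-2}+1$, which vanishes exactly at $m=2$; note the same factor is needed in part (b) to pin down $f\cdot u=-qu^2$.) Once you impose $q^2\neq-1$, your dichotomy collapses to the single clean branch ``$r_0=0$, hence $f$ acts by zero, hence the nonzero Hopf ideal $(f)$ annihilates $L$,'' and both parts of your proof are complete and coincide with the paper's.
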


\begin{proof}  
(a) Suppose that $H_{\lambda}$ is Galois-theoretical with module field $L$; we will show that $\lambda \neq 0$. The subalgebra generated by $\{k,e\}$, which is isomorphic  to the Taft algebra $T(m)$, acts inner faithfully on $L$ by Proposition~\ref{prop:GTprelim}(c). By Lemma~\ref{lem:Zn} and Proposition~\ref{prop:T(n)}, $L = \bigoplus_{i=0}^{m-1} L_{(i)}= F[u]/(u^{m}-v)$ where $L_{(i)} = \{r \in L ~|~ k \cdot r = q^{-2i} r\}$, so $L_{(0)}=L^{T(m)}=:F$ and $u \in L_{(1)}$. So for $u \in L_{(1)}$ and  $r_0 \in F$, we have that
$$k \cdot u = q^{-2}u, \quad e \cdot u =1, \quad k \cdot r_0 = r_0, \quad e \cdot r_0 = 0.$$
Since $k \cdot (f \cdot u) = q^{-2} f \cdot (k \cdot u) = q^{-4} (f \cdot u)$, we get that $f \cdot u = r_0 u^2$ for some $r_0 \in F^{\times}$.

Now we use the relation $ef-fe = \lambda \frac{k-k^{-1}}{q-q^{-1}}$ to verify part (a). On the one hand, we have that
\[
\begin{array}{rl}
(ef-fe) \cdot u &= ~e \cdot (r_0 u^2) - f \cdot 1
~=~ r_0 (e \cdot u^2) \\ 
&=~r_0 \left((k \cdot u) (e \cdot u) + (e \cdot u)u \right) ~=~ r_0(q^{-2}+1) u.
\end{array}
\]
On the other hand, we get that
$$\left(\lambda \frac{k-k^{-1}}{q-q^{-1}}\right) \cdot u ~=~ \frac{\lambda}{q-q^{-1}} (q^{-2} - q^2) u. $$
Thus, 
$$r_0 ~=~ \lambda \frac{(q^{-2} - q^2)}{(q-q^{-1})(q^{-2}+1)}  ~=~ - \lambda q.$$
Since $r_0 \in F^{\times}$, we must have that $\lambda \neq 0$, as required.

(b) Here, we show that $u_q(\mf{sl}_2)$ is Galois-theoretical, then we use the work in part (a) to determine the structure of its module fields. 
First,  $u_q(\mf{sl}_2)$ acts  on the polynomial ring $\kk[z]$ and the field $\kk(z)$ by
\[{
\begin{array}{lll}
e \cdot z =1, ~~ &f \cdot z = -qz^2, ~~ &k \cdot z = q^{-2}z.
\end{array}
}\] 
The action is inner faithful as the skew primitive elements do not act by zero; see \cite[Corollary~5.4.7]{Montgomery}.
Hence, $u_q(\mf{sl}_2)$ is Galois-theoretical. Now for any $u_q(\mf{sl}_2)$-module field $L$, we have that 
$L^{u_q(\mf{sl}_2)} = L^{G(u_q(\mf{sl}_2))} = L^{\mathbb{Z}_{m}} =: F$ where  $\mathbb{Z}_{m}$ acts faithfully on $L$  by Theorem~\ref{inva}. By Lemma~\ref{lem:Zn}, the structure of $L$ is as claimed and part (b) holds.
\end{proof}

We also have a slight reformulation of Proposition~\ref{prop:uqsl2}(b), which will be used in the sequel of this article. Let $q$ be a primitive $m$-th root of unity.
Let $K_q$ be the $m^3$-dimensional Hopf algebra generated by the grouplike element $g$ and $(g,1)$-skew primitive elements $x$ and $y$, subject to relations: $$g^m=1, \quad x^m=y^m=0, \quad gx=qxg, \quad gy=q^{-1}yg, \quad yx-qxy=1-g^2.$$

\begin{proposition} \label{prop:Kq}
The Hopf algebra $K_q$ is Galois-theoretical.
\end{proposition}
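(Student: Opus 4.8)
The plan is to realize $K_q$ as a Hopf algebra already treated in Proposition~\ref{prop:uqsl2}(b), namely a Frobenius--Lusztig kernel $u_{q_0}(\mf{sl}_2)$, so that the Galois-theoretical property is inherited by transport of structure. Assuming $m\geq 2$ (the case $m=1$ is trivial), choose $q_0\in\kk$ with $q_0^2=q$; then $q_0$ is a root of unity with $\mathrm{ord}(q_0^2)=m$ and $q_0\neq\pm1$ (equivalently $q_0-q_0^{-1}\neq0$), so the $m^3$-dimensional Hopf algebra $u_{q_0}(\mf{sl}_2)$ of Section~\ref{subsec:uqsl2} is defined. I would then show that the assignment
$$k\longmapsto g,\qquad e\longmapsto x,\qquad f\longmapsto (q_0-q_0^{-1})^{-1}\,g^{-1}y$$
extends to a Hopf algebra isomorphism $u_{q_0}(\mf{sl}_2)\to K_q$.

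For well-definedness, the comultiplication is respected because $g^{-1}y$ is $(1,g^{-1})$-skew primitive (one computes $\Delta(g^{-1}y)=1\otimes g^{-1}y+g^{-1}y\otimes g^{-1}$), and the defining relations of $u_{q_0}(\mf{sl}_2)$ become consequences of those of $K_q$: the relations $ke=q_0^2ek$, $kf=q_0^{-2}fk$, $k^m=1$, $e^m=0$ follow at once from $gx=qxg$, $g^{-1}yg=qy$, $g^m=1$, $x^m=0$; moreover $f^m=(q_0-q_0^{-1})^{-m}(g^{-1}y)^m=(q_0-q_0^{-1})^{-m}q^{-\binom{m}{2}}y^m=0$; and, using $xg^{-1}=qg^{-1}x$,
$$ef-fe=(q_0-q_0^{-1})^{-1}g^{-1}(qxy-yx)=-(q_0-q_0^{-1})^{-1}g^{-1}(1-g^2)=\frac{k-k^{-1}}{q_0-q_0^{-1}}.$$
The homomorphism is surjective since its image contains $g$, $x$, and $y=(q_0-q_0^{-1})\,g\cdot f$; as source and target both have dimension $m^3$, it is an isomorphism. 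Since being Galois-theoretical is an isomorphism invariant, Proposition~\ref{prop:uqsl2}(b) then gives that $K_q$ is Galois-theoretical. Transporting the module field of Proposition~\ref{prop:uqsl2}(b) along this isomorphism, $K_q$ acts inner faithfully on $\kk(z)$ by $g\cdot z=q^{-1}z$, $x\cdot z=1$, $y\cdot z=\tfrac{1-q}{q^2}z^2$, with $\kk[z^m]$ lying in the invariants.

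I do not anticipate a genuine obstacle here: the content is entirely a change of variables, and the only delicate point is parameter bookkeeping --- passing from $q$ to a square root $q_0$ and matching the $q$- versus $q_0$-powers in the two sets of relations. If one prefers to avoid the isomorphism, the alternative is a direct verification that the displayed operators on $\kk[z]$ satisfy all the defining relations of $K_q$ (using $x\cdot z^j=[j]_{q^{-1}}z^{j-1}$ and $y\cdot z^j=\tfrac{1-q}{q^2}[j]_{q^{-1}}z^{j+1}$, where $[j]_{q^{-1}}=1+q^{-1}+\cdots+q^{-(j-1)}$ vanishes exactly when $m\mid j$, so that one of any $m$ consecutive $[j]_{q^{-1}}$'s is zero), followed by extension to $\kk(z)$ via localization at $\kk[z^m]$ and the observation that the skew primitives $x,y$ act nontrivially, yielding inner faithfulness as in \cite[Corollary~5.4.7]{Montgomery}.
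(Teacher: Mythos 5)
Your proof is correct and is essentially the paper's argument: the paper proves this by the one-line observation that $K_{q_0^2}\cong u_{q_0}(\mf{sl}_2)$ under $g\leftrightarrow k$, $x\leftrightarrow e$, $y\leftrightarrow (q_0-q_0^{-1})k f$, which is exactly your isomorphism read in the other direction. You have merely supplied the routine verifications the paper leaves implicit.
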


\begin{proof}
We see that $K_{q^2}$ is isomorphic to $u_q(\mf{sl}_2)$, where we identify $g$, $x$, $y$ with $k$, $e$, $(q-q^{-1})kf$, respectively. 
\end{proof}
%%%%%%%%%%%%%%%%%%%%%%%%%%%%%%%%%%%%%%

\subsection{Galois-theoretical twists of $u_q(\mf{gl}_n)$ and of $u_q(\mf{sl}_n)$} \label{subsec:uqslntwist}

In this section, let $q \in \kk$ be a root of unity of odd order $m \geq 3$ as in \eqref{eq:order}, and let $n\geq 2$. Recall the definition of the Kac-De Concini quantum group ${\mathcal{U}}_q(\mathfrak{sl}_n)$
 and the small quantum group
$u_q(\mathfrak{sl}_n)$ from Section \ref{rootof1}. 
In this subsection, we need extensions of these quantum groups associated to ${\mathfrak{gl}}_n$. 

To define these extensions, we first define commuting automorphisms $g_i$ of 
${\mathcal{U}}_q(\mathfrak{sl}_n)$, for $i=1,...,n$, by the formulas 
$$
g_i(k_j)=k_j,\quad g_i(e_j)=q^{\delta_{ij}-\delta_{i,j+1}}e_j,\quad
g_i(f_j)=q^{-\delta_{ij}+\delta_{i,j+1}}f_j.
$$
It is easy to see we get that $g_ig_{i+1}^{-1}$ coincides 
with the inner automorphism defined by the grouplike element $k_i$ for each $i = 1, \dots, n-1$.  
Moreover, the automorphisms $g_i$ clearly descend to the quotient Hopf algebra 
$u_q(\mathfrak{sl}_n)$, where they satisfy the relations $g_i^m=1$. This prompts the following definition.

\begin{definition} The Hopf algebra 
${\mathcal{U}}_q(\mathfrak{gl}_n)$ is
 the smash product of ${\mathcal{U}}_q(\mathfrak{sl}_n)$
with the group $\Bbb Z^n$ generated by the $g_i$, 
modulo the relations $g_ig_{i+1}^{-1}=k_i$. 

The finite dimensional Hopf algebra $u_q(\mathfrak{gl}_n)$
is the smash product of $u_q(\mathfrak{sl}_n)$
with the group $(\Bbb Z/m\Bbb Z)^n$ generated by the $g_i$, 
modulo the relations $g_ig_{i+1}^{-1}=k_i$. 
\end{definition}

More explicitly, $u_q(\mathfrak{gl}_n)$ is
the Hopf algebra generated by grouplike elements $g_i$ for $i=1,...,n$,
$(k_j,1)$-skew primitive elements $e_j$, and $(1,k_j^{-1})$-skew
primitive elements $f_j$, for 
$k_j:=g_jg_{j+1}^{-1}$, with $j =1, \dots, n-1$, subject to relations:
\[ 
\begin{array}{ll} 
g_ie_jg_i^{-1} = q^{\delta_{ij}-\delta_{i,j+1}}e_j, 
\quad \quad g_if_jg_i^{-1} = q^{-\delta_{ij}+\delta_{i,j+1}} f_j, &\\
e_ie_j= e_je_i, \hspace{.95in} f_if_j=f_jf_i, &(|i-j| \geq 2)\\
e_i^2e_j - (q+q^{-1})e_ie_je_i + e_je_i^2 =0,
&  (|i-j|=1)\\ 
f_i^2f_j - (q+q^{-1})f_if_jf_i + f_jf_i^2 =0,
& (|i-j|=1) 
\end{array} 
\]
\vspace{-.4in}

\[
\hspace{0in}
\begin{array}{ll}
g_ig_j = g_jg_i, \hspace{.9in} e_if_j - f_je_i = \delta_{ij}
\frac{k_i - k_i^{-1}}{q-q^{-1}}, \hspace{.45in}  & \\
g_i^m=1, \hspace{1.1in} (e_{\alpha}^S)^m = (f_{\alpha}^S)^m =0, &\alpha>0
\end{array}
\]
\smallskip

\noindent where $e_{\alpha}^S$ and $f_{\alpha}^S$ are the quantum root elements 
attached to a reduced decomposition $S$
of the maximal element $w_0$ of the symmetric group, 
as in Section \ref{rootof1}. 
It is easy to see that  $u_q(\mathfrak{gl}_n)$ 
has dimension $m^{n^2}$. 

In our first result of this section, we show that $u_q(\mathfrak{gl}_2)$ is Galois-theoretical.

\begin{proposition} \label{prop:uq(gl2)} The Hopf algebra $u_q(\mf{gl}_2)$ is Galois-theoretical and the fields $L$ that admit an inner faithful $u_q(\mf{gl}_2)$-action are precisely of the form $$L = F[u,u']/(u^{m}-v, u'^m - v')$$ for $F  = L^{u_q(\mf{gl}_2)}$, $u \in L_{(1)}$, for some $v, v' \in F^{\times}$ so that $L$ is a field. In other words, $L$ is a Galois extension of its subfield of invariants $F$ with Galois group $\mathbb{Z}_m \times \mathbb{Z}_m$.
\end{proposition}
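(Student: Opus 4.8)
The plan is to identify $c:=g_1g_2$ as a central grouplike element, use it to split $u_q(\mf{gl}_2)$ as a tensor product of Hopf algebras that are already known to be Galois-theoretical, and then read off the module-field structure from the corresponding facts for the two factors.

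\textbf{The decomposition.} First I would check that $c$ is central: $cec^{-1}=q^{\delta_{11}-\delta_{12}+\delta_{21}-\delta_{22}}e=e$, similarly $cfc^{-1}=f$, and $c$ commutes with $g_1,g_2$. Let $B$ be the subalgebra generated by $k:=g_1g_2^{-1}$, $e$, $f$. Since $m$ is odd one recovers $g_1=(kc)^{(m+1)/2}$ and $g_2=(k^{-1}c)^{(m+1)/2}$, so $B$ and $c$ together generate $u_q(\mf{gl}_2)$; moreover $B$ is a Hopf subalgebra (as $\Delta(e)=k\otimes e+e\otimes 1$ and $\Delta(f)=1\otimes f+f\otimes k^{-1}$), and the relations $kek^{-1}=q^2e$, $kfk^{-1}=q^{-2}f$, $ef-fe=(k-k^{-1})/(q-q^{-1})$, $e^m=f^m=0$, $k^m=1$ exhibit $B$ as a quotient of $u_q(\mf{sl}_2)$ (using $\mathrm{ord}(q^2)=m$, which holds because $m$ is odd). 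Since $c$ is central, multiplication gives a surjective Hopf algebra map $B\otimes\kk\langle c\rangle\to u_q(\mf{gl}_2)$. A dimension count then finishes it: $\dim B\le\dim u_q(\mf{sl}_2)=m^3$, while surjectivity forces $\dim B\cdot\mathrm{ord}(c)\ge\dim u_q(\mf{gl}_2)=m^4$, whence $\mathrm{ord}(c)=m$ and $\dim B=m^3$, so the map is an isomorphism $u_q(\mf{gl}_2)\cong u_q(\mf{sl}_2)\otimes\kk\mathbb{Z}_m$ of Hopf algebras. Galois-theoreticity is then immediate from Propositions~\ref{prop:uqsl2}(b), \ref{prop:GTprelim}(a), and \ref{prop:GTprelim}(e).

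\textbf{The module fields.} Let $L$ be an inner faithful $u_q(\mf{gl}_2)$-module field. The Hopf algebra is pointed, and by the decomposition $G:=G(u_q(\mf{gl}_2))\cong G(u_q(\mf{sl}_2))\times\langle c\rangle\cong\mathbb{Z}_m\times\mathbb{Z}_m$; by Theorem~\ref{inva}, $F:=L^{u_q(\mf{gl}_2)}=L^G$ and $L/F$ is Galois with group $\mathbb{Z}_m\times\mathbb{Z}_m$. Restricting the action to $B\cong u_q(\mf{sl}_2)$ keeps it inner faithful by Proposition~\ref{prop:GTprelim}(c), so Proposition~\ref{prop:uqsl2}(b) gives $L=L^B[u]/(u^m-v)$ with $u\in L_{(1)}$ and $L^B=L^{\langle k\rangle}$. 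The central element $c$ preserves $L^B$ and acts there with order $m$ (its class generates $\mathrm{Gal}(L^B/F)\cong G/\langle k\rangle\cong\langle c\rangle$), so $\kk\langle c\rangle\cong\kk\mathbb{Z}_m$ acts inner faithfully on $L^B$ and Lemma~\ref{lem:Zn} yields $L^B=F[u']/(u'^m-v')$ with $v'\in F^\times$. Finally, since $L_{(1)}$ is a sum of common $G$-eigenspaces, one may choose $u$ to be a common $G$-eigenvector; then $g\cdot u^m=u^m$ for all $g\in G$, so $v=u^m\in F^\times$, giving $L=F[u,u']/(u^m-v,u'^m-v')$, where the requirement that $L$ be a field amounts (Kummer theory) to $v,v'$ generating a subgroup $\cong\mathbb{Z}_m\times\mathbb{Z}_m$ of $F^\times/(F^\times)^m$.

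\textbf{Where the difficulty lies.} The hard part will be the decomposition step: verifying that the natural surjection $B\otimes\kk\mathbb{Z}_m\to u_q(\mf{gl}_2)$ is an isomorphism, which comes down to the dimension bookkeeping forcing $\dim B=m^3$, together with checking that $B$ genuinely satisfies the defining relations of $u_q(\mf{sl}_2)$. Both of these use in an essential way that $m$ is odd — to invert $2$ modulo $m$ (so as to recover $g_1,g_2$ from $k$ and $c$) and to ensure $\mathrm{ord}(q^2)=m$. Everything afterwards is a routine assembly of Theorem~\ref{inva}, Proposition~\ref{prop:uqsl2}(b), Lemma~\ref{lem:Zn}, and classical Kummer theory.
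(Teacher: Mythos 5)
Your proof is correct, but it takes a genuinely different route from the paper's. The paper argues by direct construction: it extends the $u_q(\mf{sl}_2)$-action on $\kk(z)$ from Proposition~\ref{prop:uqsl2}(b) by setting $g_1\cdot z=q^{-1}z$, $g_2\cdot z=qz$, asserts inner faithfulness because the skew primitives act nontrivially, and then invokes Theorem~\ref{inva} for the field structure. You instead prove the structural isomorphism $u_q(\mf{gl}_2)\cong u_q(\mf{sl}_2)\otimes\kk\mathbb{Z}_m$ (splitting off the central grouplike $c=g_1g_2$, using that $2$ is invertible mod the odd $m$) and then quote Propositions~\ref{prop:GTprelim}(a),(e) and \ref{prop:uqsl2}(b); your dimension bookkeeping for this splitting is sound, and it is consistent with the paper's own later observation that $u_q(\mf{gl}_n)/(c-1)=u_q(\mf{sl}_n)$ when $\gcd(m,n)=1$. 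Your approach buys something real: on the paper's one-variable field $\kk(z)$ the central grouplike $c$ acts by $q^{-1}\cdot q=1$, i.e.\ trivially, so the nonzero Hopf ideal generated by $c-1$ annihilates $\kk(z)$ and that particular action is in fact \emph{not} inner faithful (it factors through $u_q(\mf{gl}_2)/(c-1)\cong u_q(\mf{sl}_2)$). Your tensor-product construction, which in effect acts on the two-variable field $\mathrm{Frac}(\kk(z)\otimes\kk(z'))$, repairs this — compare the two-variable action used for $u_q'(\mf{gl}_2)$ in Proposition~\ref{prop:moduqgl2}. On the module-field classification your argument is also more detailed than the paper's one-line appeal to Theorem~\ref{inva}: restricting to the copy of $u_q(\mf{sl}_2)$, applying Lemma~\ref{lem:Zn} to the $c$-action on $L^{\langle k\rangle}$, and choosing $u\in L_{(1)}$ to be a common $G$-eigenvector so that $u^m\in F$ are all correct steps, and the final Kummer-theoretic remark about when $L$ is a field matches the intended reading of the statement.
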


\begin{proof}
We have that $u_q(\mf{gl}_2)$ acts  on the field $\kk(z)$ by extending the action of $u_q(\mf{sl}_2)$ on $\kk(z)$ from Proposition~\ref{prop:uqsl2}(b) as follows:
\[
\begin{array}{llll}
\smallskip
g_1 \cdot z= q^{-1} z, \quad & g_2 \cdot z = q z, \quad & e \cdot z = 1, \quad & f \cdot z = -q z^2.
\end{array}
\]
The action is inner faithful as the skew primitive elements do not act by zero; see \cite[Corollary~5.4.7]{Montgomery}.
Hence,  $u_q(\mf{gl}_2)$ is Galois-theoretical.
Also  by Theorem~\ref{inva}, $L^{u_q(\mf{gl}_2)}=L^{\Bbb{Z}_m\times \Bbb{Z}_m}$,
which implies the second statement.
\end{proof}

To study the Galois-theoretical property of twists of $u_q(\mathfrak{gl}_n)$ and of $u_q(\mathfrak{sl}_n)$, consider the quantum polynomial algebra 
$$A_q = \kk\langle z_1, \dots, z_n \rangle/(z_iz_j - q
z_j z_i)_{i <j}.$$ By \cite[Theorem~4.1]{Hu},
we have that $A_q$ is a left 
$\mathcal{U}_q({\mathfrak{gl}}_n)$-module algebra with the following action:
\[
\begin{array}{llll} 
e_i \cdot z_{i+1} = z_i, ~  \quad &f_i \cdot z_i =z_{i+1}, ~\quad
& g_i \cdot z_j = q^{\delta_{ij}}z_j, \\
e_i \cdot z_j = 0, ~ \quad &f_i \cdot z_{j'} =0,\\
\end{array} 
\] 
 for $j \neq i+1$, $j' \neq i$. Thus, we have the following result.

\begin{lemma}\label{descend}
The action of $\mathcal{U}_q(\mathfrak{gl}_n)$ on $A_q$ above descends to an inner faithful action of 
$u_q({\mathfrak{gl}}_n)$ on $A_q$.
\end{lemma}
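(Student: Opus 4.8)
The plan is to show two things: first, that the $\mathcal{U}_q(\mathfrak{gl}_n)$-action on $A_q$ factors through the finite-dimensional quotient $u_q(\mathfrak{gl}_n)$; and second, that the resulting $u_q(\mathfrak{gl}_n)$-action is inner faithful. For the first part, I would check that every generator of the Hopf ideal defining $u_q(\mathfrak{gl}_n)$ inside $\mathcal{U}_q(\mathfrak{gl}_n)$ annihilates $A_q$. The relations $g_i^m = 1$ act by zero because $g_i$ acts on each $z_j$ by a power of $q$, hence by an $m$-th root of unity (recall $q$ has odd order $m$), so $g_i^m$ acts as the identity on each monomial. For the nilpotency relations $(e_\alpha^S)^m = (f_\alpha^S)^m = 0$, the key observation is that the $e_i$ (resp. $f_i$) act on $A_q$ as locally nilpotent skew-derivation-type lowering (resp. raising) operators on the total-degree grading: $e_i$ sends $z_{i+1} \mapsto z_i$ and kills the other $z_j$, so $e_i$ strictly decreases a certain weight. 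One should argue that the root vectors $e_\alpha^S$, being iterated $q$-commutators (braid-group images) of the $e_i$, also act as such lowering operators, and that raising them to the $m$-th power forces enough applications that the operator must vanish — the cleanest route is to note that $A_q$, as a $u_q(\mathfrak{sl}_n)$-module (after the action descends modulo $k_i^m$ and the nilpotency relations on $u_q(\mathfrak{sl}_n)$), is a direct sum of highest-weight integrable-type modules on which the $m$-th powers of root vectors are known to act by zero; alternatively, invoke directly that $(e_\alpha^S)^m$ and $(f_\alpha^S)^m$ lie in the center of $\mathcal{U}_q(\mathfrak{gl}_n)$ and act by computable scalars that one checks to be zero on a generating set.

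Once the action descends, inner faithfulness of the $u_q(\mathfrak{gl}_n)$-action is the substantive part. The strategy is to show that any nonzero Hopf ideal $I$ of $u_q(\mathfrak{gl}_n)$ acts nontrivially on $A_q$. I would use the structure of $u_q(\mathfrak{gl}_n)$: it is pointed with grouplikes forming $(\mathbb{Z}/m\mathbb{Z})^n$, which acts faithfully on $A_q$ since the $z_i$ have pairwise distinct characters (the character of $z_j$ is $g_i \mapsto q^{\delta_{ij}}$, and these $n$ characters are linearly independent, indeed distinct). Hence $I$ cannot be supported purely on the coradical unless $I = 0$. For the general case, by the standard fact that a nonzero Hopf ideal of a finite-dimensional pointed Hopf algebra meets the first term of the coradical filtration nontrivially — more precisely $I \cap H_1 \neq 0$ unless $I=0$ — one reduces to showing that the skew-primitive elements $e_i, f_i$ (and the $g_i - 1$) act by linearly independent operators on $A_q$, together with the grouplikes. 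The grouplike part is handled above; for the skew-primitive directions one observes $e_i \cdot z_{i+1} = z_i \neq 0$ and $f_i \cdot z_i = z_{i+1} \neq 0$, and that the $e_i$ lower and $f_i$ raise in a way that makes their actions manifestly linearly independent from each other and from the grouplike actions.

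I expect the main obstacle to be the nilpotency relations $(e_\alpha^S)^m = 0$ for non-simple roots $\alpha$: verifying these act by zero on $A_q$ requires either an explicit (messy) description of how the root vectors $e_\alpha^S = T_{w_\alpha^S}(e_{i_N})$ act on the quantum affine space $A_q$, or a conceptual argument. The cleanest conceptual argument, which I would pursue, is: the action of $\mathcal{U}_q(\mathfrak{gl}_n)$ on $A_q$ makes $A_q$ a module in category $\mathcal{O}$-type (each graded piece is finite-dimensional, and $A_q$ is a union of finite-dimensional submodules under $\mathcal{U}_q^{\leq 0}$ and highest-weight under... actually $A_q$ is generated by a highest weight vector direction), so that on $A_q$ the Frobenius-Lusztig kernel relations hold because $A_q$ already factors through Lusztig's "big" quantum group $U_q(\mathfrak{gl}_n)$ with divided powers specialized at the root of unity — and on that algebra the $m$-th powers of root vectors are known to vanish on every module (this is essentially the defining property making $u_q$ a Hopf subalgebra). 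I would cite the relevant results of Lusztig and De Concini--Kac from Section~\ref{rootof1} rather than recompute. A short, honest alternative is simply to say: since $(e_\alpha^S)^m, (f_\alpha^S)^m, k_i^m-1$ are central in $\mathcal{U}_q(\mathfrak{gl}_n)$ and $A_q$ is a domain generated over $\kk$ by the $z_i$ on which these central elements act by zero (a quick direct check using that they lie in the augmentation-type ideal), they act by zero on all of $A_q$; then the descent is immediate and one proceeds to inner faithfulness as above.
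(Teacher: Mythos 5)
Your second half (inner faithfulness) is essentially the paper's argument: use pointedness, the fact that a nonzero Hopf ideal must contain a nonzero skew-primitive element, and check that the grouplikes act through distinct characters while $e_i\cdot z_{i+1}=z_i$ and $f_i\cdot z_i=z_{i+1}$ are nonzero. The genuine gap is in the descent step. The reduction you need — and never correctly justify — is that it suffices to check the relations $g_i^m-1$, $(e_\alpha^S)^m$, $(f_\alpha^S)^m$ on the \emph{generators} $z_j$ alone. This holds because these elements generate a Hopf ideal $I$, so $\Delta(I)\subseteq I\otimes \mathcal{U}_q(\mathfrak{gl}_n)+\mathcal{U}_q(\mathfrak{gl}_n)\otimes I$, and then $h\cdot(ab)=\sum(h_1\cdot a)(h_2\cdot b)$ kills products by induction on word length. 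Your proposed justification via centrality is false as a general principle: a central element killing the algebra generators need not kill products (take $\kk\mathbb{Z}_2$ acting on $\kk[z]$ by $g\cdot z=-z$; then $1+g$ kills $z$ but sends $z^2$ to $2z^2$). Your other routes are also not viable: the ``category $\mathcal{O}$ / integrable module'' argument is circular (it presupposes that $A_q$ is already a module over the big quantum group with divided powers, i.e., essentially what you are trying to prove), and the claim that $(e_\alpha^S)^m$ acts as a lowering operator that ``must vanish'' on all of $A_q$ does not follow from a weight count once monomials have degree $\ge m$. Once the reduction to generators is in place, the verification is easy and is what the paper does: $V=\mathrm{span}(z_1,\dots,z_n)$ is the vector representation, every monomial in the expansion of $(e_\alpha^S)^m$ contains some $e_i$ at least twice, and any word in the $e_i$'s with a repeated letter annihilates $V$ (its weight shift is not a difference of two $\epsilon_k$'s).

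A secondary, smaller gap: your reduction of inner faithfulness to ``the $e_i$, $f_i$ and $g_i-1$ act by linearly independent operators'' silently assumes that the only nontrivial skew-primitives of $u_q(\mathfrak{gl}_n)$, up to multiplication by grouplikes and scaling, are the $e_i$ and $f_i$. That is true but requires an argument (the paper supplies one in a footnote, via generation of the dual in degree one); a general nonzero Hopf ideal is only guaranteed to contain \emph{some} nonzero skew-primitive, and you must know what those can be before checking them against the explicit action. Also, for faithfulness of the grouplike part you need the characters $\chi_j$ of the $z_j$ to generate $\widehat{G}\cong(\mathbb{Z}/m\mathbb{Z})^n$, not merely to be pairwise distinct; this does hold here since $q$ has order $m$.
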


\begin{proof}
We have that $g_i^m -1$, $(e_{\alpha}^S)^m$, $(f_{\alpha}^S)^m$ generate a Hopf ideal of $\mathcal{U}_q(\mathfrak{gl}_n)$. So, to check that this Hopf ideal acts by zero on $A_q$, it suffices to check that it acts by zero on the generators of $A_q$, in this case, $z_i$. It is obvious that $g_i^m$ acts as the identity on $A_q$. Moreover, $(e_{\alpha}^S)^m$ (resp., $(f_{\alpha}^S)^m$) act by zero on the generators $z_j$,  as $(e_{\alpha}^S)^m$ (resp., $(f_{\alpha}^S)^m$) contains more than one copy of some $e_i$ (resp. some $f_i$). 

Since $u_q(\mathfrak{gl}_n)$ is of finite Cartan type, and thus generated  by the degree one part of its coradical filtration, the only skew primitive elements of $u_q(\mf{gl}_n)$ modulo the trivial ones, up to multiplication by grouplike elements and up to scaling, are $e_i$ and $f_i$.
\footnote{Let $H=gr(u_q(\mf{gl}_n))$. Then, it is known that $H^*$ is a Hopf subalgebra in $u_q^{\geq 0}(\mf{gl}_n)\otimes u_q^{\leq 0}(\mf{gl}_n)$, 
so it is generated in degree 1, i.e. by the grouplike elements and $e_i,f_i$. (This is also a special case of Theorem~\ref{thm:ASconjAbel}, as 
$H^*$ is pointed with an abelian group of grouplike elements.) This implies that any 
homogeneous skew-primitive element $x$ of $H$ of degree $\ge 2$ is zero. Indeed, $\langle x,ab \rangle= \langle \Delta(x),a\otimes b \rangle=0$ 
if $a,b\in H^*$ are of positive degree, but any element of degree $\ge 2$ in $H^*$ is a linear combination of elements of the form $ab$ with 
$\deg(a),\deg(b)\ge 1$. Thus, any skew-primitive element in $H$ modulo trivial ones is a product of a grouplike element with $e_i$ or $f_i$ up to scaling. 
Hence, the same is true for $u_q(\mf{gl}_n)$. }
Hence, the action of  $u_q(\mathfrak{gl}_n)$ on $A_q$ is inner faithful since any nonzero Hopf ideal of $u_q(\mathfrak{gl}_n)$ has a nonzero intersection with the span of skew primitive elements of $u_q(\mathfrak{gl}_n)$ \cite[Corollary~5.4.7]{Montgomery}.
\end{proof}
  
Recall the discussion in Section~\ref{subsec:twist}. Let $G=
(\mathbb{Z}/m\mathbb{Z})^n$  be the Cartan subgroup of $u_q(\mathfrak{gl}_n)$ and let 
$\chi_i \in \widehat{G}$ be defined as $\chi_i(p_1, \dots, p_n) =
q^{p_i}$. 
Let $J^+$ and $J^-$ be Drinfeld twists of $\kk G$ so that 
$$\sigma_{J^+}(\chi_i,\chi_j) = 
\begin{cases} 
q &\text{ for }i>j,\\ 1 &\text{ for
}i\le j 
\end{cases} \text{ \quad and \quad }  
\sigma_{J^-}(\chi_i,\chi_j) =  
\begin{cases} 
q &\text{ for } i < j,\\ 1 &\text{ for }i\ge j.
\end{cases}$$
Note that the twist $(J^\pm)^{-1}$ is gauge equivalent to $J^\mp$.

Let us identify $G$ with $G(H)$ via
$(p_1,\dots,p_n)\mapsto g_1^{p_1}\dots g_n^{p_n}$. 
Then, we have the following result. 

\begin{proposition} \label{prop:uqglntwist} 
The twists $u_q(\mf{gl}_n)^{J^+}$ and $u_q(\mf{gl}_n)^{J^-}$ are Galois-theoretical.
\end{proposition}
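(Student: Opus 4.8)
The plan is to use Lemma~\ref{descend} together with the twisting machinery of Section~\ref{subsec:twist}. By Lemma~\ref{descend}, $u_q(\mathfrak{gl}_n)$ acts inner faithfully on the quantum polynomial algebra $A_q$. Since the twisting operation $A \mapsto A_J$ sends inner faithful $H$-module algebras to inner faithful $H^J$-module algebras and is invertible, we conclude that $u_q(\mathfrak{gl}_n)^{J^{\pm}}$ acts inner faithfully on the twisted algebra $(A_q)_{J^{\pm}}$. So it suffices to show that $(A_q)_{J^{\pm}}$ is (or contains) a commutative domain on which $u_q(\mathfrak{gl}_n)^{J^{\pm}}$ acts inner faithfully, and then apply Lemma~\ref{lem:Skryabin}.

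The key computation is to identify $(A_q)_{J^{\pm}}$ using Theorem~\ref{thm:GKM}. Indeed $A_q = \kk\langle z_1,\dots,z_n\rangle/(z_iz_j - qz_jz_i)_{i<j}$ is a quantum polynomial ring; and the generators $z_i$ are eigenvectors for the Cartan group $G = (\mathbb{Z}/m\mathbb{Z})^n$ with characters $\chi_i$ (from the formula $g_i \cdot z_j = q^{\delta_{ij}} z_j$). However, $A_q$ is not itself a polynomial ring, so strictly I would first want to realize $A_q$ as a twist of the honest commutative polynomial ring $\kk[z_1,\dots,z_n]$: writing $A_q = \kk[z_1,\dots,z_n]_{J_0}$ for the twist $J_0$ of $\kk G$ with bicharacter $b_{J_0}(\chi_i,\chi_j) = q$ for $i<j$, $=q^{-1}$ for $i>j$, $=1$ for $i=j$ (using Theorem~\ref{thm:GKM} in reverse), and then observing $(A_q)_{J^{\pm}} = (\kk[z_1,\dots,z_n]_{J_0})_{J^{\pm}} = \kk[z_1,\dots,z_n]_{J_0 J^{\pm}}$ since twisting a module algebra by $J$ then $J'$ is the same as twisting by $J J'$ (the bicharacters add). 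Computing $b_{J_0} \cdot b_{J^{\pm}}$: for $J^+$ the off-diagonal contributions cancel on one side, giving $b_{J_0}(\chi_i,\chi_j) b_{J^+}(\chi_i,\chi_j)$; the point of the particular choice of $J^{\pm}$ is precisely that this product equals $1$ for all $i,j$, so Theorem~\ref{thm:GKM} yields $(A_q)_{J^{\pm}} \cong \kk[z_1,\dots,z_n]$, an honest commutative polynomial ring — hence a commutative domain.

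Thus $u_q(\mathfrak{gl}_n)^{J^{\pm}}$ acts inner faithfully on the commutative domain $\kk[z_1,\dots,z_n]$, and by Lemma~\ref{lem:Skryabin} the action extends to an inner faithful action on its fraction field $\kk(z_1,\dots,z_n)$. Therefore $u_q(\mathfrak{gl}_n)^{J^{\pm}}$ is Galois-theoretical.

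The main obstacle I anticipate is bookkeeping with the cocycles and bicharacters — in particular verifying carefully that twisting the module algebra structure is "additive" in the twist (so that $J_0$ followed by $J^{\pm}$ composes to a twist whose bicharacter is the pointwise product), and then checking that the chosen $J^{\pm}$ exactly cancels the braiding of $A_q$ to produce the trivial (commutative) relations rather than some other quantum torus. A secondary subtlety is making sure the action of the grouplike part is unaffected by twisting (only comultiplication, hence compatibility of products, changes), so that the eigenvalue data feeding into Theorem~\ref{thm:GKM} is the same before and after; this is routine but needs to be stated. Inner faithfulness itself is essentially free from the transport-of-structure property of twists recorded in Section~\ref{subsec:twist}.
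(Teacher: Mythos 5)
Your overall strategy --- twist the inner faithful $u_q(\mf{gl}_n)$-module algebra $A_q$ from Lemma~\ref{descend}, identify the twisted algebra as a commutative polynomial ring via Theorem~\ref{thm:GKM}, and finish with Lemma~\ref{lem:Skryabin} --- is exactly the paper's, and it works verbatim for $J^+$. But there is a genuine gap in the $J^-$ case: a single quantum polynomial algebra $A_q=\kk[z_1,\dots,z_n]_{J_0}$ cannot untwist to the commutative ring under both $J^+$ and $J^-$. That would require $b_{J_0}b_{J^+}=1$ and $b_{J_0}b_{J^-}=1$ simultaneously, hence $b_{J^+}=b_{J^-}$; but from the definitions $b_{J^\pm}(\chi_i,\chi_j)=q^{\pm 1}$ for $i<j$, so $b_{J^-}=b_{J^+}^{-1}$ and the two agree only if $q^2=1$, which is excluded since $\mathrm{ord}(q)=m\ge 3$ is odd. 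Concretely, with the conventions of Theorem~\ref{thm:GKM} one finds $\kk[z_1,\dots,z_n]_{(J^+)^{-1}}\cong A_q$, so $(A_q)_{J^+}\cong\kk[z_1,\dots,z_n]$ as you want, but $(A_q)_{J^-}\cong\kk[z_1,\dots,z_n]_{(J^+)^{-1}J^-}\cong A_{q^2}$, a genuinely noncommutative quantum polynomial ring. (Relatedly, your stated $b_{J_0}(\chi_i,\chi_j)=q$ for $i<j$ is off by an inverse --- the relation in Theorem~\ref{thm:GKM} reads $z_i\ast_J z_j=b_J(\chi_j,\chi_i)\,z_j\ast_J z_i$ with the arguments swapped --- but whichever convention you fix, the two twists cannot both cancel against the same $J_0$.)

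The missing ingredient is a second inner faithful module algebra adapted to $J^-$. The paper supplies it by transporting structure through the index reversal $\Phi: i\mapsto n+1-i$: this is induced by the Dynkin diagram flip, an automorphism of $u_q(\mf{gl}_n)$, and it carries $A_q$ to $A_{q^{-1}}$, which is therefore again an inner faithful $u_q(\mf{gl}_n)$-module algebra. Since $A_{q^{-1}}\cong\kk[z_1,\dots,z_n]_{(J^-)^{-1}}$, the same untwisting computation now gives $(A_{q^{-1}})_{J^-}\cong\kk[z_1,\dots,z_n]$, and the $J^-$ case follows. The remaining ingredients in your write-up (twisting preserves inner faithfulness, bicharacters multiply under composition of twists, the $G$-eigenvalue data is unchanged by twisting) are correct and match the paper.
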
 

\begin{proof} 
Since $A_q$ is an inner faithful left $u_q(\mf{gl}_n)$-module algebra, $( A_q)_{J^+}$ is an inner faithful left $u_q(\mf{gl}_n)^{J^+}$-module 
algebra. Now by Lemma~\ref{lem:Skryabin}, it suffices to show
that $(A_q)_{J^+}$ is a commutative domain. 
By Theorem~\ref{thm:GKM}, we get that $\kk[z_1,
\dots,z_n]_{(J^+)^{-1}} \cong A_q$. 
Thus, $\kk[z_1, \dots,z_n] = (\kk[z_1,
\dots,z_n]_{(J^+)^{-1}})_{J^+} = (A_q)_{J^+}$.

By using the map $\Phi$ that relabels indices by $i \mapsto
n+1-i$, we get that $A_{q^{-1}} = \Phi(A_q)$ is a left
$u_q(\mf{gl}_n) = 
\Phi(u_q(\mf{gl}_n))$-module algebra. Following the argument above, we get that $u_q(\mf{gl}_n)^{J^-}$ is also Galois-theoretical.
\end{proof}

Proposition \ref{prop:uqglntwist} allows us to show that some
quotients of $u_q(\mf{gl}_n)^{J^\pm}$ are also Galois-theoretical. 
Namely, let $C$ be the subgroup of central grouplike elements in
$G=G(H)$. It is clear that an element $g=g_1^{p_1} \dots g_n^{p_n}$
is central if and only if $p_i=p_{i+1}$ for all $i$, that is to say,
$g:=(g_1 \dots g_n)^t$ for some integer $t \geq 1$. So the group $C$ is isomorphic to
$\Bbb Z/m\Bbb Z$ and is generated by $c:=g_1 \dots g_n$. Now, 
consider the Hopf algebra
$u_q(\mf{gl}_n)^{[s]}:=u_q(\mf{gl}_n)/(c^s-1)$.

\begin{proposition}
We have that $(u_q(\mf{gl}_n)^{[s]})^{J^\pm}$ is Galois-theoretical.
\end{proposition}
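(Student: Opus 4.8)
The plan is to leverage Proposition~\ref{prop:uqglntwist} by passing to a Veronese-type subalgebra and then twisting. Recall from Lemma~\ref{descend} that $A_q$ is an inner faithful $u_q(\mf{gl}_n)$-module algebra, and from the proof of Proposition~\ref{prop:uqglntwist} that $(A_q)_{J^+}=\kk[z_1,\dots,z_n]$ is a commutative domain. On the common underlying vector space of $A_q$ and $(A_q)_{J^+}$, the central grouplike element $c=g_1\cdots g_n$ acts by $z_i\mapsto q z_i$, so $c^s$ acts on a monomial $z^\alpha$ by $q^{s|\alpha|}z^\alpha$; write $m'=m/\gcd(m,s)$ for the order of $q^s$. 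If $m'=1$ then $c^s=1$ already holds in $u_q(\mf{gl}_n)$ and the statement reduces to Proposition~\ref{prop:uqglntwist}, so I assume $m'\ge 2$.

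First I would build the module algebra. Since $c^s$ is a central grouplike element, its action commutes with the action of all of $u_q(\mf{gl}_n)$; hence $B:=A_q^{\langle c^s\rangle}$ — the span of the monomials $z^\alpha$ with $m'\mid|\alpha|$ — is a $u_q(\mf{gl}_n)$-stable subalgebra of $A_q$ annihilated by the Hopf ideal $(c^s-1)$, and so $B$ is a $u_q(\mf{gl}_n)^{[s]}$-module algebra. As $J^+\in\kk G\otimes\kk G$ descends along $\kk G\twoheadrightarrow\kk(G/\langle c^s\rangle)$, the twist $(u_q(\mf{gl}_n)^{[s]})^{J^+}$ is defined, and $B$ is closed under $\ast_{J^+}$ because grouplike elements preserve total degree; therefore $B_{J^+}$ is a subalgebra of $(A_q)_{J^+}=\kk[z_1,\dots,z_n]$, hence a commutative domain, and it is a $(u_q(\mf{gl}_n)^{[s]})^{J^+}$-module algebra. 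Since twisting preserves inner faithfulness, Lemma~\ref{lem:Skryabin} then reduces the proposition to showing that $B$ is an inner faithful $u_q(\mf{gl}_n)^{[s]}$-module algebra.

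This last step is the crux, and it does not follow formally from inner faithfulness of $A_q$: writing $A_q=\bigoplus_{j=0}^{m'-1}A_q^{(j)}$ with $A_q^{(j)}$ the span of the monomials of total degree $\equiv j\pmod{m'}$, a Hopf ideal of $u_q(\mf{gl}_n)$ strictly containing $(c^s-1)$ could a priori annihilate $B=A_q^{(0)}$ while acting nontrivially on some $A_q^{(j)}$, which would not contradict the inner faithfulness of $A_q=\bigoplus_j A_q^{(j)}$. Instead I would argue as in the proof of Lemma~\ref{descend}. The Hopf algebra $u_q(\mf{gl}_n)^{[s]}$ is finite dimensional and pointed of finite Cartan type with abelian group of grouplikes $G/\langle c^s\rangle$; so, exactly as there (using Theorem~\ref{thm:ASconjAbel} and the dualization argument of the accompanying footnote), its nontrivial skew primitive elements, modulo trivial ones and up to scaling and multiplication by grouplikes, are precisely $e_i,f_i$ for $1\le i\le n-1$. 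Hence by \cite[Corollary~5.4.7]{Montgomery} every nonzero Hopf ideal of $u_q(\mf{gl}_n)^{[s]}$ has nonzero intersection with $\mathrm{span}_\kk\{e_i,f_i\}$, so it remains to check that these $2(n-1)$ elements act by $\kk$-linearly independent operators on $B$. For that I would use the $\N^n$-grading of $B$ by multidegree inherited from $A_q$: each $e_i$ is homogeneous for this grading (it raises the $i$-th multidegree coordinate by $1$ and lowers the $(i+1)$-st by $1$), each $f_i$ is homogeneous with the opposite shift, and the $2(n-1)$ resulting shifts are pairwise distinct, so $\kk$-linear independence of the operators reduces to each $e_i$ and each $f_i$ acting by a nonzero operator on $B$. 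That is a direct check: for $n\ge 3$ and $j\notin\{i,i+1\}$ one has $e_i\cdot(z_{i+1}z_j^{m'-1})=z_iz_j^{m'-1}$ and $f_i\cdot(z_iz_j^{m'-1})$ a nonzero scalar multiple of $z_{i+1}z_j^{m'-1}$, with both arguments in $B$; for $n=2$ one uses $e_1\cdot(z_1z_2^{m'-1})$ and $f_1\cdot(z_1^{m'-1}z_2)$, which are nonzero multiples of $z_1^2z_2^{m'-2}$ and $z_1^{m'-2}z_2^2$ respectively.

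Finally, the $J^-$ case follows by the same argument after applying the relabeling automorphism $\Phi\colon i\mapsto n+1-i$ — which fixes $c=g_1\cdots g_n$, hence fixes the ideal $(c^s-1)$ and descends to $u_q(\mf{gl}_n)^{[s]}$ — so that $A_{q^{-1}}=\Phi(A_q)$ plays the role of $A_q$, exactly as in the proof of Proposition~\ref{prop:uqglntwist}. The main obstacle throughout is this inner faithfulness of the action on the Veronese subalgebra $B$ — equivalently, ruling out Hopf ideals lying strictly between $(c^s-1)$ and the annihilator of $B$ — which genuinely needs the structural description of the skew primitive elements of $u_q(\mf{gl}_n)^{[s]}$ together with the explicit non-vanishing computations above, rather than any soft consequence of the inner faithfulness of $A_q$.
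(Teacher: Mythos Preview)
Your approach—passing to the $c^s$-invariants and then twisting—is exactly what the paper does: it takes the subfield $Q=\{z\in L : c^s\cdot z=z\}$ of the module field $L$ from Proposition~\ref{prop:uqglntwist} and asserts that inner faithfulness ``is easy to check directly.'' Working on the polynomial side with the Veronese subring $B$ and then invoking Lemma~\ref{lem:Skryabin} is an equivalent route.

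There is, however, a gap in your inner-faithfulness argument. The inference you draw from \cite[Corollary~5.4.7]{Montgomery}—that every nonzero Hopf ideal of $u_q(\mf{gl}_n)^{[s]}$ meets $\mathrm{span}_\kk\{e_i,f_i\}$—is not what that result says. Montgomery only guarantees a nonzero intersection with $H_1$, and $H_1$ contains $\kk G$. When $\gcd(m,s)>1$ the image of $c$ in $u_q(\mf{gl}_n)^{[s]}$ is a nontrivial central grouplike, and the Hopf ideal $(c-1)$ has zero intersection with $\mathrm{span}_\kk\{e_i,f_i\}$, so linear independence of the $e_i,f_i$ as operators on $B$ is not by itself sufficient. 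The missing step is to verify that $G/\langle c^s\rangle$ acts faithfully on $B$ (compare the paper's phrasing in Proposition~\ref{descend1}, where both the grouplike kernel and the skew primitives are checked). This is a short direct computation: if $g=g_1^{p_1}\cdots g_n^{p_n}$ fixes every monomial $z^\alpha$ with $m'\mid|\alpha|$, then testing on $z_i^{m'}$ gives $m'p_i\equiv 0\pmod m$, and comparing $z_i^{m'-1}z_j$ with $z_i^{m'}$ gives $p_i\equiv p_j\pmod m$; hence all $p_i$ are equal and divisible by $\gcd(m,s)$, so $g\in\langle c^{\gcd(m,s)}\rangle=\langle c^s\rangle$. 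With this added, your explicit nonvanishing computations for the $e_i$ and $f_i$ on $B$ (and the multidegree argument separating them) then legitimately finish the proof.
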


\begin{proof}
Let $L$ be the $u_q(\mf{gl}_n)^{J^\pm}$-module field obtained from
Proposition \ref{prop:uqglntwist}. 
Let $Q=\lbrace{z\in L ~|~ c^s \cdot z=z\rbrace}$. Then, $Q$ is
an $(u_q(\mf{gl}_n)^{[s]})^{J^\pm}$-module field, and it is easy to check
directly that the action of $(u_q(\mf{gl}_n)^{[s]})^{J^\pm}$ on this field is
inner faithful.  
\end{proof} 

The algebra $u_q(\mf{sl}_n)$ has a subgroup $\mathbb{Z}/n\mathbb{Z}$ consisting of central grouplike elements, and its intersection with the group $\mathbb{Z}/m\mathbb{Z}$ generated by $c$ is $\mathbb{Z}/$gcd($m,n$)$\mathbb{Z}$. So the intersection of these subgroups is trivial if and only if gcd($m$,$n$)$=1$. Thus, $u_q(\mf{gl}_n)/(c-1) = u_q(\mf{sl}_n)$ for $m$ and $n$ relatively prime, and we have the following result.

\begin{corollary} \label{cor:uqslntwist}
If $m$ and $n$ are relatively prime, then we have that $u_q({\mathfrak{sl}}_n)^{J^\pm}$ are 
Galois-theoretical (where we abuse notation and denote by 
$J^\pm$ the images of the twists $J^\pm$ in the quotient).  \qed
\end{corollary}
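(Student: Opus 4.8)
The plan is to deduce this corollary directly from Proposition~\ref{prop:uqglntwist} and the preceding proposition on quotients of $u_q(\mf{gl}_n)^{J^\pm}$ by central grouplike elements. First I would observe that, since $m$ and $n$ are relatively prime, the central grouplike subgroup $\mathbb{Z}/m\mathbb{Z}$ of $u_q(\mf{gl}_n)$ generated by $c = g_1 \cdots g_n$ meets the central $\mathbb{Z}/n\mathbb{Z}$ of $u_q(\mf{sl}_n)$ only in the trivial subgroup $\mathbb{Z}/\gcd(m,n)\mathbb{Z} = \{1\}$; the text has already noted this, together with the resulting isomorphism $u_q(\mf{gl}_n)/(c-1) \cong u_q(\mf{sl}_n)$. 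Under this isomorphism the twist $J^\pm$ on $u_q(\mf{gl}_n)$, which is supported on the Cartan group $G = (\mathbb{Z}/m\mathbb{Z})^n$, descends to a Drinfeld twist on the quotient, and I would write $J^\pm$ for its image there as well (as the statement already warns).

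Next, applying the proposition on $(u_q(\mf{gl}_n)^{[s]})^{J^\pm}$ with $s = 1$ gives exactly that $(u_q(\mf{gl}_n)/(c-1))^{J^\pm} = u_q(\mf{sl}_n)^{J^\pm}$ is Galois-theoretical. Concretely this means: take the $u_q(\mf{gl}_n)^{J^\pm}$-module field $L$ produced in Proposition~\ref{prop:uqglntwist}, pass to the subfield $Q = \{z \in L \mid c \cdot z = z\}$ of $c$-invariants, and note that $Q$ is stable under the induced action of $u_q(\mf{sl}_n)^{J^\pm}$ and that this action is inner faithful. The verification of inner faithfulness is the routine check already invoked in that proof: any nonzero Hopf ideal of $u_q(\mf{sl}_n)^{J^\pm}$ meets the span of the (nontrivial) skew-primitive elements $e_i, f_i$ by \cite[Corollary~5.4.7]{Montgomery}, and these act nontrivially on $Q$ because they already act nontrivially on the image of the quantum polynomial algebra $A_q$ (equivalently, on the commutative polynomial ring after untwisting). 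One does need to confirm that $Q$ is nonzero and is genuinely a field containing $\kk$ — this follows since $c$ acts on $L$ as a grouplike element of finite order, so $L$ decomposes into $c$-eigenspaces and the eigenvalue-$1$ component $Q = L^{\langle c \rangle}$ is a subfield over which $L$ is a finite extension.

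The main obstacle — really a bookkeeping point rather than a deep one — is checking that the twist behaves correctly under the quotient: one must confirm that $J^\pm \in \kk G \otimes \kk G$, with $G$ the Cartan group of $u_q(\mf{gl}_n)$, maps to a well-defined twist on $u_q(\mf{sl}_n) = u_q(\mf{gl}_n)/(c-1)$, i.e. that the cocycle $\sigma_{J^\pm}$ (hence the alternating bicharacter $b_{J^\pm}$) on $\widehat{G}$ descends along the surjection dual to $G \twoheadrightarrow G/\langle c\rangle$, or at any rate that the construction of the previous proposition goes through verbatim with $s=1$. Since that previous proposition already handles arbitrary $s \geq 1$ and the case $s=1$ with $\gcd(m,n)=1$ is exactly the $u_q(\mf{sl}_n)$ situation, essentially nothing new is required, which is why the statement is recorded as a corollary with only a reference-style proof.
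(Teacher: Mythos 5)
Your proposal is correct and follows essentially the same route as the paper: the corollary is obtained by specializing the preceding proposition on $(u_q(\mf{gl}_n)^{[s]})^{J^\pm}$ to $s=1$, using the identification $u_q(\mf{gl}_n)/(c-1)=u_q(\mf{sl}_n)$ when $\gcd(m,n)=1$ and passing to the subfield of $c$-invariants of the module field from Proposition~\ref{prop:uqglntwist}. The extra remarks about the twist descending and $Q$ being a nonzero subfield are exactly the routine checks the paper leaves implicit.
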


 In general, we get that $(u_q({\mathfrak{sl}}_n)/(c^s-1))^{J^\pm}$ is 
Galois-theoretical for $s=m/gcd(m,n)$.

\begin{remark} \label{rem:fullII} 
We will show in Part II of this work that, in contrast with the above result, untwisted 
$u_q({\mathfrak{sl}}_n)$ is not Galois-theoretical for $n\ge 3$.
We will also show that  the twists $J^+$ and $J^-$ are the only twists $J$ coming from the Cartan subgroup that make 
$u_q({\mathfrak{sl}}_n)^J$  Galois-theoretical. For $u_q({\mathfrak{gl}}_n)$, the situation is similar. 
\end{remark}

%%%%%%%%%%%%%%%%%%%%%%%%%%%%%%%%%%%%%
\subsection{A modification of $u_{q^{1/2}}(\mf{gl}_2)$ that is Galois-theoretical} \label{subsec:moduqgl2} 
Let $m \geq 2$ and let $q$ be a primitive $m$-th root of unity in $\kk$. We consider a modification $u'_q(\mf{gl}_2)$ of the Hopf algebra $u_{q^{1/2}}(\mf{gl}_2)$ that is of finite Cartan type. We will see that this is a special case of the Galois-theoretical Hopf algebra 
$u_{q^{1/2}}(\mf{gl}_2)^{J^+}$ considered above. 
The computations below follow similarly to those in previous sections, so some details are omitted.

\begin{definition} The $m^4$-dimensional Hopf algebra $u'_q(\mf{gl}_2)$ is generated by grouplike elements $\gamma_1, \gamma_2$, a $(\gamma_1,1)$-skew primitive element $x_1$, and a $(\gamma_2, 1)$-skew primitive element $x_2$, subject to relations:
\[
\begin{array}{c}
\gamma_1^m = \gamma_2^m =1, \quad \gamma_1\gamma_2 = \gamma_2 \gamma_1, \quad x_1^m = x_2^m =0, 
\quad x_2x_1 - q x_1 x_2= 1- \gamma_1 \gamma_2,\\
\gamma_1x_1 = q x_1 \gamma_1, \quad \gamma_1x_2 = q^{-1} x_2 \gamma_1, \quad \gamma_2x_1 = q x_1 \gamma_2, \quad \gamma_2x_2 = q^{-1} x_2 \gamma_2.
\end{array}
\]
\end{definition}

\noindent We have the following two results.

\begin{proposition}
 Let $q$ be a primitive $m$-th root of unity in $\kk$ as in \eqref{eq:order}. 
Then, we have an isomorphism of Hopf algebras 
$\phi: u_{q^2}'({\mathfrak{gl}}_2)\to u_{q}({\mathfrak{gl}}_2)^{(J^+)^{-1}}$ 
given by the formulas 
$$
\phi(\gamma_1)=g_1^{2},\quad \phi(\gamma_2)=g_2^{-2},\quad \phi(x_1)=eg_1,\quad \phi(x_2)=(q-q^{-1})g_2^{-1}f,
$$
where the twist $J^+$ is defined in Section~\ref{subsec:uqslntwist} for $n=2$. 
\end{proposition}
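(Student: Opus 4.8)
The plan is to define $\phi$ on generators by the stated formulas, show it extends to a well-defined homomorphism of bialgebras, and then upgrade it to a Hopf algebra isomorphism by a dimension count.

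The essential preliminary step is to make $u_q(\mf{gl}_2)^{(J^+)^{-1}}$ explicit. Since a Drinfeld twist does not alter multiplication, $u_q(\mf{gl}_2)^{(J^+)^{-1}}$ coincides with $u_q(\mf{gl}_2)$ as an algebra, so one only needs the twisted coproduct $\Delta':=\Delta^{(J^+)^{-1}}$, which is $\Delta'(h)=J^+\,\Delta(h)\,(J^+)^{-1}$. Grouplike elements stay grouplike. For $e=e_1$ and $f=f_1$ I would compute $\Delta'(e)$ and $\Delta'(f)$ using the explicit shape $J^+=\sum_{\chi,\psi\in\widehat G}\sigma_{J^+}(\chi,\psi)\,\mathbf 1_\chi\otimes\mathbf 1_\psi$ from Section~\ref{subsec:twist}, together with the identities $\mathbf 1_\psi\,e=e\,\mathbf 1_{\psi\mu_e^{-1}}$ and $\mathbf 1_\psi\,f=f\,\mathbf 1_{\psi\mu_f^{-1}}$, where $\mu_e,\mu_f\in\widehat G$ are the adjoint weights of $e,f$ (so $\mu_e(g_1)=q$, $\mu_e(g_2)=q^{-1}$, $\mu_f=\mu_e^{-1}$). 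Taking the bilinear representative of the gauge class of $J^+$ (legitimate by Proposition~\ref{prop:JtobJ}), the idempotent bookkeeping collapses to
$$\Delta'(e)=g_1\otimes e+e\otimes g_1^{-1},\qquad \Delta'(f)=g_2^{-1}\otimes f+f\otimes g_2.$$
I expect this twist computation to be the main obstacle; the rest is routine manipulation of commutation relations.

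Next I would verify that $\phi(\gamma_1)=g_1^{2}$, $\phi(\gamma_2)=g_2^{-2}$, $\phi(x_1)=eg_1$, $\phi(x_2)=(q-q^{-1})g_2^{-1}f$ satisfy the defining relations of $u'_{q^2}(\mf{gl}_2)$ inside the algebra $u_q(\mf{gl}_2)$. The relations $\gamma_i^m=1$ and $\gamma_1\gamma_2=\gamma_2\gamma_1$ are immediate, and the four relations $\gamma_i x_j=q^{\pm 2}x_j\gamma_i$ follow from $g_1e=qeg_1$, $g_2e=q^{-1}eg_2$, $g_1f=q^{-1}fg_1$, $g_2f=qfg_2$. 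Since $m$ is odd, $q^{m(m-1)/2}=1$, whence $(eg_1)^m=e^m=0$ and $((q-q^{-1})g_2^{-1}f)^m=(q-q^{-1})^m f^m=0$, using that $e=e_{\alpha_1}^S$ and $f=f_{\alpha_1}^S$ are root vectors with $e^m=f^m=0$. For the last relation, commuting all grouplike factors to the left gives $\phi(x_2)\phi(x_1)=(q-q^{-1})\,k_1 fe$ and $\phi(x_1)\phi(x_2)=(q-q^{-1})q^{-2}\,k_1 ef$ with $k_1=g_1g_2^{-1}$, so
$$\phi(x_2)\phi(x_1)-q^{2}\phi(x_1)\phi(x_2)=-(q-q^{-1})\,k_1(ef-fe)=-k_1(k_1-k_1^{-1})=1-k_1^{2}=\phi(1-\gamma_1\gamma_2),$$
using $ef-fe=(k_1-k_1^{-1})/(q-q^{-1})$. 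Hence $\phi$ extends to an algebra homomorphism.

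Finally I would check that $\phi$ is a coalgebra map, so that $\phi$ is a bialgebra homomorphism and hence automatically a Hopf algebra homomorphism. Compatibility with the counit is clear since $\varepsilon(g_i^{\pm 2})=1$ and $\varepsilon(eg_1)=\varepsilon(g_2^{-1}f)=0$. For the coproduct, grouplikes are immediate, and since $\Delta'$ is an algebra map the displayed formulas for $\Delta'(e),\Delta'(f)$ give $\Delta'(eg_1)=(g_1\otimes e+e\otimes g_1^{-1})(g_1\otimes g_1)=g_1^{2}\otimes eg_1+eg_1\otimes 1=(\phi\otimes\phi)(\gamma_1\otimes x_1+x_1\otimes 1)$, and likewise $\Delta'((q-q^{-1})g_2^{-1}f)=g_2^{-2}\otimes\phi(x_2)+\phi(x_2)\otimes 1=(\phi\otimes\phi)(\gamma_2\otimes x_2+x_2\otimes 1)$. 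It then remains to see $\phi$ is bijective: it is surjective since, $m$ being odd, $g_1=\phi(\gamma_1)^{(m+1)/2}$ and $g_2=\phi(\gamma_2)^{-(m+1)/2}$ lie in the image, hence so do $e=\phi(x_1)g_1^{-1}$ and $f=(q-q^{-1})^{-1}g_2\phi(x_2)$; and since $\dim u'_{q^2}(\mf{gl}_2)=m^4=\dim u_q(\mf{gl}_2)=\dim u_q(\mf{gl}_2)^{(J^+)^{-1}}$, the surjection $\phi$ is an isomorphism of Hopf algebras.
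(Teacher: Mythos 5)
Your proposal is correct and follows exactly the route the paper intends: the paper's proof is just "one can check this by direct computation" together with the idempotent commutation rules $\mathbf 1_\chi e=e\,\mathbf 1_{\chi-\chi_1+\chi_2}$, $\mathbf 1_\chi f=f\,\mathbf 1_{\chi+\chi_1-\chi_2}$, and you have carried that computation out in full (the twisted coproducts $\Delta'(e)=g_1\otimes e+e\otimes g_1^{-1}$, $\Delta'(f)=g_2^{-1}\otimes f+f\otimes g_2$, the relation checks, and the bijectivity via $m$ odd and $\dim=m^4$ all check out).
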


\begin{proof}
One can check this by direct computation. Here, $G = ({\mathbb Z}/m{\mathbb Z})^{2}$ is the group of grouplike elements of $u_q(\mathfrak{gl}_2)$ and we have that $J^+ = \sum_{\chi, \psi \in \widehat{G}} \sigma_{J^+}(\chi, \psi) {\mathbf 1}_{\chi} \otimes {\mathbf 1}_{\psi},$ where ${\mathbf 1}_{\chi} e =e {\mathbf 1}_{\chi - \chi_1 + \chi_2}$ and ${\mathbf 1}_{\chi} f =f {\mathbf 1}_{\chi + \chi_1 - \chi_2}$.
\end{proof}

\begin{proposition} \label{prop:moduqgl2} 
The Hopf algebra $u'_q(\mf{gl}_2)$ is Galois-theoretical, and the fields $L$ that admit an inner faithful $u'_{q}(\mf{gl}_2)$-action are of the form $$L ~=~ F[u,u']/(u^m-v, u'^m - v')$$ for $F  = L^{u'_{q}(\mf{gl}_2)}$, for some $v, v' \in F^{\times}$ so that $L$ is a field. In other words, $L$ is a Galois extension of its subfield of invariants $L^{u'_{q}(\mf{gl}_2)}$ with Galois group $\mathbb{Z}_m \times \mathbb{Z}_m$.
\end{proposition}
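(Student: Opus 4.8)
The plan is to argue exactly as for $u_q(\mf{sl}_2)$ and $u_q(\mf{gl}_2)$ in Propositions~\ref{prop:uqsl2}(b) and~\ref{prop:uq(gl2)}, using in addition the Hopf algebra isomorphism $\phi$ established just above.

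For the Galois-theoretical property: by the preceding proposition $u'_{q^2}(\mf{gl}_2)\cong u_q(\mf{gl}_2)^{(J^+)^{-1}}$, and since $(J^+)^{-1}$ is gauge equivalent to $J^-$ (Section~\ref{subsec:uqslntwist}) this is isomorphic as a Hopf algebra to $u_q(\mf{gl}_2)^{J^-}$, which is Galois-theoretical by Proposition~\ref{prop:uqglntwist}; being Galois-theoretical is an isomorphism invariant, and, since $q\mapsto q^2$ permutes the primitive $m$-th roots of unity when $m$ is as in~\eqref{eq:order}, every $u'_q(\mf{gl}_2)$ under consideration arises this way. Equivalently, one transports the explicit inner faithful action on a commutative domain produced in the proof of Proposition~\ref{prop:uqglntwist} along $\phi$ and applies Lemma~\ref{lem:Skryabin}.

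For the classification of module fields: $u'_q(\mf{gl}_2)$ is pointed, being generated by grouplike and skew primitive elements (Lemma~\ref{lem:pointed}(b)), with $G:=G(u'_q(\mf{gl}_2))=\langle\gamma_1\rangle\times\langle\gamma_2\rangle\cong\mathbb{Z}_m\times\mathbb{Z}_m$. Hence, for any inner faithful $u'_q(\mf{gl}_2)$-module field $L$, Theorem~\ref{inva}(ii) gives $L^{u'_q(\mf{gl}_2)}=L^G=:F$ with $F\subset L$ a Galois extension of group $\mathbb{Z}_m\times\mathbb{Z}_m$. Decomposing $L$ into joint $\gamma_1,\gamma_2$-eigenspaces (with $F$ the trivial-weight part) and using that $\kk\subseteq F$ contains all $m$-th roots of unity, Kummer theory writes $L=F[u,u']/(u^m-v,(u')^m-v')$, where $u$ and $u'$ are joint $\gamma_1,\gamma_2$-eigenvectors whose characters form a $\mathbb{Z}/m$-basis of $\widehat G$, where $v:=u^m$ and $v':=(u')^m$ lie in $F^\times$, and where the requirement that $L$ be a field with Galois group exactly $\mathbb{Z}_m\times\mathbb{Z}_m$ amounts to the classes of $v,v'$ generating a subgroup $\cong\mathbb{Z}_m\times\mathbb{Z}_m$ of $F^\times/(F^\times)^m$. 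Finally, the commutation relations $\gamma_i x_1\gamma_i^{-1}=qx_1$ and $\gamma_i x_2\gamma_i^{-1}=q^{-1}x_2$ together with $\varepsilon(x_j)=0$ force $x_1,x_2$ to act on $u,u'$ as normalized raising and lowering operators (after rescaling $u,u'$), and the single remaining scalar is pinned down by evaluating the relation $x_2x_1-qx_1x_2=1-\gamma_1\gamma_2$ on these eigenvectors, just as the relation $ef-fe=\lambda\frac{k-k^{-1}}{q-q^{-1}}$ is used in the proof of Proposition~\ref{prop:uqsl2}. This last computation is the only substantial one, and since it proceeds along the lines of the previous sections the details may be omitted.

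The main obstacle I expect is the classification rather than the existence. The delicate points are: (i) reconstructing the \emph{two} radical generators simultaneously and reading off the exact character of each from the $\gamma_i$-weight bookkeeping --- note that a one-variable ansatz provably fails, since the inhomogeneous right-hand side of $x_2x_1-qx_1x_2=1-\gamma_1\gamma_2$ cannot be matched when the chosen eigenvector carries equal $\gamma_1$- and $\gamma_2$-weights, so one is forced into a genuine rank-two weight system; and (ii) formulating the correct hypothesis on $L$, which --- in contrast to the cyclic case of Proposition~\ref{prop:T(n) reform} --- is the \emph{joint} independence of $v$ and $v'$ in $F^\times/(F^\times)^m$, not merely that each be a non-power.
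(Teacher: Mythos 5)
Your argument is correct, and on the classification of module fields it coincides with the paper's: both deduce $L^{u'_q(\mf{gl}_2)}=L^{\mathbb{Z}_m\times\mathbb{Z}_m}$ from Theorem~\ref{inva} and then read off the shape of $L$ by Kummer theory for $\mathbb{Z}_m\times\mathbb{Z}_m$-extensions of a field containing all $m$-th roots of unity; your sharpening of ``so that $L$ is a field'' to the requirement that the classes of $v,v'$ generate a copy of $\mathbb{Z}_m\times\mathbb{Z}_m$ in $F^\times/(F^\times)^m$ is correct and more precise than the paper's phrasing. Where you genuinely diverge is the existence part: the paper simply writes down an explicit inner faithful action of $u'_q(\mf{gl}_2)$ on $\kk(z_1,z_2)$ (namely $\gamma_1\cdot z_1=qz_1$, $\gamma_2\cdot z_2=qz_2$, $x_1\cdot z_1=(1-q)z_1^2z_2$, $x_2\cdot z_2=z_1^{-1}$, with the remaining generators acting trivially or by zero), whereas you transport the action of Proposition~\ref{prop:uqglntwist} along the isomorphism $\phi$. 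Your route is cleaner in that it requires no relation-checking, but note two costs: (i) it only covers $q$ of odd order $m\ge 3$ as in~\eqref{eq:order}, both because Proposition~\ref{prop:uqglntwist} is proved under that hypothesis and because $q\mapsto q^2$ is a bijection on primitive $m$-th roots of unity only for odd $m$, while the subsection sets up $u'_q(\mf{gl}_2)$ for all $m\ge 2$ and the paper's explicit action works in that generality; (ii) your closing paragraph on how $x_1,x_2$ must act on $u,u'$ goes beyond what the proposition asserts and is left unverified, though nothing in the stated claim depends on it.
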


\begin{proof}
We have that $u_q'(\mf{gl}_2)$ acts inner faithfully on the field $\kk(z_1,z_2)$ by
\[ 
 \hspace{-.07in}\begin{array}{llll}
\smallskip
\gamma_1 \cdot z_1 = q z_1, \quad \quad  & \gamma_1 \cdot z_2 = z_2, \quad \quad &\gamma_2 \cdot z_1 = z_1, \quad \quad & \gamma_2 \cdot z_2 = q z_2,\\
\smallskip
x_1 \cdot z_1= (1-q) z_1^2z_2, &x_1 \cdot z_2 =0,
&x_2 \cdot z_1 =0,& x_2 \cdot z_2 = \frac{1}{z_1}.\\
%x_1 \cdot \frac{1}{z_1}= (1-\frac{1}{q}) z_2, &x_1 \cdot\frac{1}{z_2} =0,
%&x_2 \cdot \frac{1}{z_1} =0,& x_2 \cdot \frac{1}{z_2} = \frac{-1}{qz_1z_2^2}.\\
\end{array}
\]
Hence,  $u_q'(\mf{gl}_2)$ is Galois-theoretical.
Also  by Theorem~\ref{inva}, $L^{u_q'(\mf{gl}_2)}=L^{\Bbb{Z}_m\times \Bbb{Z}_m}$,
which implies the second statement.
\end{proof}

%%%%%%%%%%%%%%%%%%%%%%%%%%%%%%%%%%%%%%

\subsection{Galois-theoretical twists of $u_q^{\geq 0}(\mf{g})$} \label{subsec:uqgtwist}

Keep the notation of Section \ref{rootof1}.
First, let $q$ be a variable (i.e., we work over $\kk[q,q^{-1}]$). 
Fix an orientation of edges on the Dynkin diagram of ${\mathfrak{g}}$,
and denote the corresponding oriented diagram by $Q$.  

To examine the Galois-theoretical property of twists of $u_q^{\geq 0}(\mathfrak{g})$, we consider the quantum polynomial algebra 
$$
A_{q,Q}: = \kk\langle z_1, \dots, z_r \rangle/(z_iz_j - q^{\pm
  d_ia_{ij}} z_j z_i)_{i <j},
$$
$i=1,...,r$, where $\{z_i\}$ correspond to the vertices of the Dynkin
diagram $Q$. Here, 
the power of $q$ is $d_i a_{ij}$ if the edge $i-j$ is oriented
as $i \rightarrow j$, and is $-d_i a_{ij}$ otherwise.

We have the following well known proposition, which can be proved
directly.

\begin{proposition} \cite{IoharaMalikov} \cite[Proposition~3.1]{Etingof:Whittaker}
The algebra $A_{q,Q}$ is a quotient of  the subalgebra
${\mathcal{U}}_q^+(\mf{g})$ of ${\mathcal{U}}_q(\mf{g})$ 
generated by the $\{e_i\}$, with the quotient map sending $e_i$ 
to $z_i$. Namely, $A_{q,Q}\cong {\mathcal{U}}_q^+(\mf{g})/I_Q$, where $I_Q =  \langle e_i
e_j -q^{\pm d_i a_{ij}} e_j e_i \rangle_{i<j}$. \qed
\end{proposition}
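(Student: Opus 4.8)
The plan is to exhibit the quotient map explicitly and then verify it is an isomorphism by a dimension/PBW argument. First I would define a $\kk[q,q^{-1}]$-algebra homomorphism $\pi\colon {\mathcal{U}}_q^+(\mf{g}) \to A_{q,Q}$ on generators by $e_i \mapsto z_i$. To see this is well defined, one must check that the defining relations of ${\mathcal{U}}_q^+(\mf{g})$ — namely the quantum Serre relations $\sum_{p=0}^{1-a_{ij}} (-1)^p \begin{bmatrix} 1-a_{ij}\\ p\end{bmatrix}_{q^{d_i}} e_i^{1-a_{ij}-p} e_j e_i^p = 0$ for $i \neq j$ — are sent to zero in $A_{q,Q}$. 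In $A_{q,Q}$ we have $z_i z_j = q^{\pm d_i a_{ij}} z_j z_i$, so each generator $q$-commutes with each other, and a direct computation (using the $q$-binomial identity, or simply commuting all $z_i$'s past $z_j$ and summing the resulting geometric-type series of $q$-binomial coefficients to zero) shows the Serre relations hold. This gives the surjection $\pi$, and visibly $I_Q \subseteq \ker\pi$, so $\pi$ descends to a surjection $\bar\pi\colon {\mathcal{U}}_q^+(\mf{g})/I_Q \to A_{q,Q}$.

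Next I would show $\bar\pi$ is injective, equivalently that $\ker\pi = I_Q$. The cleanest route is to compare bases. The algebra $A_{q,Q}$, being an iterated skew polynomial ring over $\kk[q,q^{-1}]$, is free with basis the ordered monomials $z_1^{m_1}\cdots z_r^{m_r}$. On the other side, ${\mathcal{U}}_q^+(\mf{g})$ has the Lusztig--De~Concini--Kac PBW basis given by ordered monomials in the quantum root vectors $e_\alpha^S$ over all positive roots $\alpha$, with respect to the fixed convex ordering. Modulo $I_Q$, I claim every such root vector monomial reduces to a monomial in the simple root vectors $e_1,\dots,e_r$ only: the higher root vectors $e_\alpha^S$ for $\alpha$ non-simple are iterated $q$-commutators of the $e_i$, and in the quotient $A_{q,Q}$ (where the $e_i$ already $q$-commute) these $q$-commutators vanish or collapse to scalars times simple-root monomials. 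Thus ${\mathcal{U}}_q^+(\mf{g})/I_Q$ is spanned by the ordered monomials $e_1^{m_1}\cdots e_r^{m_r}$, and $\bar\pi$ sends this spanning set bijectively onto the basis of $A_{q,Q}$; hence it is a free-module isomorphism.

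I expect the main obstacle to be the second half — controlling the ideal $I_Q$ well enough to see that the images of the PBW monomials in higher root vectors do not produce new independent elements, i.e. proving that ${\mathcal{U}}_q^+(\mf{g})/I_Q$ is \emph{not} bigger than $A_{q,Q}$. One should be careful that the reduction of $e_\alpha^S$ modulo $I_Q$ is consistent with the straightening relations, so that the count of monomials is exactly right rather than merely an upper bound that might collapse further. An alternative that sidesteps the delicate PBW bookkeeping is to cite the references \cite{IoharaMalikov}, \cite[Proposition~3.1]{Etingof:Whittaker} directly for the isomorphism statement, as the proposition is flagged ``well known''; in that case the write-up need only record the homomorphism $e_i \mapsto z_i$, verify the Serre relations are respected, observe $I_Q \subseteq \ker$, and invoke the cited result (or the surjectivity/PBW-dimension comparison) for the reverse inclusion.
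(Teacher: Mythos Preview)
The paper does not prove this proposition: it is stated as ``well known,'' attributed to \cite{IoharaMalikov} and \cite[Proposition~3.1]{Etingof:Whittaker}, and closed without argument. So your proposal already supplies more than the paper does, and your outline is correct.

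That said, your second half is more elaborate than necessary. The substantive step is the first one: verifying that the quantum Serre relations hold in $A_{q,Q}$ (the $q$-binomial computation you describe). Once that is done, you have shown that the Serre ideal is contained in the ideal of the free algebra $\kk\langle z_1,\dots,z_r\rangle$ generated by the $q$-commutation relations $z_iz_j - q^{\pm d_ia_{ij}}z_jz_i$. But then both $\mathcal{U}_q^+(\mf{g})/I_Q$ and $A_{q,Q}$ are the free algebra modulo that \emph{same} ideal, hence tautologically isomorphic; equivalently, the assignment $z_i\mapsto \bar e_i$ gives a well-defined inverse to $\bar\pi$, since the defining relations of $A_{q,Q}$ are exactly the generators of $I_Q$. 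There is no need for the Lusztig--De~Concini--Kac PBW basis or for any analysis of higher root vectors, and the ``main obstacle'' you anticipate does not arise.
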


The next proposition claims that the adjoint action of 
${\mathcal{U}}_q^{\geq 0}(\mf{g})$ on ${\mathcal{U}}_q^+(\mf{g})$ descends to 
an action on $A_{q,Q}$. 

\begin{proposition} \label{prop:UactonAqQ}
We have that $A_{q,Q}$ is a left ${\mathcal{U}}_q^{\geq 0}(\mf{g})$-module
algebra, where the action is induced by the (left) adjoint action
of ${\mathcal{U}}^{\ge 0}_q(\mf{g})$ on itself.  
In other words, $h \cdot a = \sum h_1 a S(h_2)$ for $h,a \in {\mathcal{U}}^{\ge 0}_q(\mf{g})$, so
\[
k_i \cdot e_j= q^{d_ia_{ij}} e_j\quad \text{and} \quad
e_i \cdot e_j = -q^{d_ia_{ij}}e_j e_i +e_i e_j.
\]
\end{proposition}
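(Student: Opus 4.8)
The plan is to produce the action on $A_{q,Q}$ in three stages: first restrict the adjoint action of $\mathcal{U}_q^{\ge 0}(\mf{g})$ on itself to the subalgebra $\mathcal{U}_q^+(\mf{g})$, then descend it to the quotient $A_{q,Q}\cong\mathcal{U}_q^+(\mf{g})/I_Q$, and finally read off the formulas on the generators.

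Recall that for any Hopf algebra $H$ the left adjoint action $\mathrm{ad}(h)(a)=\sum h_1 a S(h_2)$ makes $H$ a left $H$-module algebra; in particular $\mathcal{U}_q^{\ge 0}(\mf{g})$ is a module algebra over itself. Since $\Delta(e_i)=k_i\otimes e_i+e_i\otimes 1$ and $S(e_i)=-k_i^{-1}e_i$, one gets $\mathrm{ad}(k_i^{\pm 1})(a)=k_i^{\pm 1}a k_i^{\mp 1}$ and $\mathrm{ad}(e_i)(a)=e_i a-k_i a k_i^{-1}e_i$; in particular
\[
\mathrm{ad}(k_i)(e_j)=q^{d_i a_{ij}}e_j,\qquad \mathrm{ad}(e_i)(e_j)=e_ie_j-q^{d_i a_{ij}}e_je_i,
\]
both of which lie in $\mathcal{U}_q^+(\mf{g})$. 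As $\mathcal{U}_q^{\ge 0}(\mf{g})$ is generated as an algebra by the $k_i^{\pm 1}$ and $e_i$, and the set of $h$ with $\mathrm{ad}(h)(\mathcal{U}_q^+(\mf{g}))\subseteq\mathcal{U}_q^+(\mf{g})$ is a subalgebra, it suffices to note (using $k_iak_i^{-1}\in\mathcal{U}_q^+(\mf{g})$ for $a\in\mathcal{U}_q^+(\mf{g})$) that $\mathrm{ad}(k_i^{\pm1})$ and $\mathrm{ad}(e_i)$ stabilize $\mathcal{U}_q^+(\mf{g})$. Hence $\mathcal{U}_q^+(\mf{g})$, with the restricted adjoint action, is a left $\mathcal{U}_q^{\ge 0}(\mf{g})$-module algebra.

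Next I would show that $I_Q=\langle r_{ij}\rangle_{i<j}$, where $r_{ij}:=e_ie_j-q^{\pm d_i a_{ij}}e_je_i$, is a $\mathcal{U}_q^{\ge 0}(\mf{g})$-submodule of $\mathcal{U}_q^+(\mf{g})$. Again the set of $h$ with $\mathrm{ad}(h)(I_Q)\subseteq I_Q$ is a subalgebra, so only $h=k_i^{\pm1}$ and $h=e_i$ need to be treated. Since $\mathrm{ad}(k_i^{\pm1})$ is an algebra automorphism of $\mathcal{U}_q^+(\mf{g})$, it preserves $I_Q$ once it sends each $r_{jk}$ into $I_Q$, which holds because $e_je_k$ and $e_ke_j$ have the same weight, so $\mathrm{ad}(k_i)(r_{jk})=q^{d_i(a_{ij}+a_{ik})}r_{jk}$. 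For $h=e_i$ the operator $\mathrm{ad}(e_i)$ is a skew derivation, $\mathrm{ad}(e_i)(ab)=\mathrm{ad}(k_i)(a)\,\mathrm{ad}(e_i)(b)+\mathrm{ad}(e_i)(a)\,b$; applying this rule repeatedly to an arbitrary element $\sum a_\alpha r_{j_\alpha k_\alpha}b_\alpha$ of $I_Q$ and using the $k_i$-case, one reduces to checking $\mathrm{ad}(e_i)(r_{jk})\in I_Q$ for all $i$ and all generators $r_{jk}$. When $i$ is neither equal nor adjacent to $j,k$ in the Dynkin diagram this is immediate since then $\mathrm{ad}(e_i)(e_j)=\mathrm{ad}(e_i)(e_k)=0$ and $\mathrm{ad}(k_i)(e_j)=e_j$; the remaining cases ($i\in\{j,k\}$, or $i$ adjacent to $j$ or to $k$) are a direct computation, expanding $\mathrm{ad}(e_i)(e_je_k)$ and $\mathrm{ad}(e_i)(e_ke_j)$ by the skew Leibniz rule and reducing modulo the relations $r_{jk}=0$ together with the $q$-Serre relations of $\mathcal{U}_q^+(\mf{g})$.

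Finally, since $I_Q$ is simultaneously a two-sided ideal and a $\mathcal{U}_q^{\ge 0}(\mf{g})$-submodule of the module algebra $\mathcal{U}_q^+(\mf{g})$, the quotient $A_{q,Q}=\mathcal{U}_q^+(\mf{g})/I_Q$ inherits a left $\mathcal{U}_q^{\ge 0}(\mf{g})$-module-algebra structure (the quotient map being a surjective algebra map that intertwines the actions), and the displayed formulas for the action on the generators $z_i$ are the images under $e_i\mapsto z_i$ of the formulas for $\mathrm{ad}$ on $e_i$. I expect the main obstacle to be the case analysis in the third paragraph: one must track the orientation-dependent sign $\pm d_i a_{ij}$ in each $r_{jk}$ carefully and invoke the $q$-Serre relations precisely in the cases where $i$ is adjacent to $j$ or $k$ (in particular for the non-simply-laced types, where the Cartan entries and the $d_i$ are not all equal); the rest is bookkeeping.
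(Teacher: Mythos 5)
Your skeleton is the same as the paper's: restrict the adjoint action to $\mathcal{U}_q^+(\mf{g})$, show the ideal $I_Q$ is $\mathrm{ad}$-stable, and pass to the quotient. The problem is that you defer the decisive step --- stability of $I_Q$ under $\mathrm{ad}(e_i)$ in the cases where $i$ equals or is adjacent to $j$ or $k$ --- to an unexecuted ``direct computation,'' and your description of that computation is not quite right. For instance, in type $B_2$ with the relation $r_{12}=e_1e_2-q^{-2}e_2e_1$, the element $\mathrm{ad}(e_2)(r_{12})$ has degree $3$ in the $e$'s while the relevant $q$-Serre relation has degree $1-a_{21}+1=4$, so ``reducing modulo the Serre relations'' cannot dispose of it; what is actually true is that $\mathrm{ad}(e_2)(r_{12})=e_2r_{12}-r_{12}e_2$, a nonzero element of the ideal $I_Q$. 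So the case analysis would succeed, but only after recognizing, case by case, either a Serre relation (as for $\mathrm{ad}(e_1)(r_{12})$ in type $A_2$) or an explicit ideal-membership identity. As written, the main obstacle you yourself flag is left unresolved.

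The irony is that you have already written down the one line that makes all of this unnecessary, namely $\mathrm{ad}(e_i)(a)=e_ia-k_iak_i^{-1}e_i$. This is exactly the paper's argument: if $u\in I_Q$, then $e_iu\in I_Q$ because $I_Q$ is a two-sided ideal, and $k_iuk_i^{-1}\in I_Q$ by the (trivial) $k$-stability you already established, whence $k_iuk_i^{-1}e_i\in I_Q$ as well; therefore $\mathrm{ad}(e_i)(u)\in I_Q$ for \emph{every} $u\in I_Q$, with no Leibniz rule, no reduction to generators, no Serre relations, and no dependence on the orientation $Q$. In short: any two-sided ideal of $\mathcal{U}_q^+(\mf{g})$ stable under conjugation by the $k_i$ is automatically stable under the adjoint action of the $e_i$. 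Replacing your third paragraph by this observation closes the gap and shortens the proof to a few lines.
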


\begin{proof} 
To verify the claim, it suffices to show that the ideal $I_Q$ is ${\mathcal{U}}_q^+(\mf{g})$-stable under the adjoint action.  Indeed, it is clear that the action of $k_{\ell}$ stabilizes $I_Q$. 
Note that
$$e_{\ell} \cdot u = e_{\ell} u - k_{\ell} u k_{\ell}^{-1}e_{\ell},$$
for any $u \in {\mathcal{U}}_q^+(\mf{g})$. So, any two sided ideal of ${\mathcal{U}}_q^+(\mf{g})$ stable under the adjoint actions of $\{k_{\ell}\}$ is also stable under the action of $\{e_{\ell}\}$, and we are done.
\end{proof} 

Now let us specialize $q$ to a root of unity of order $m$ as in \eqref{eq:order} of Section \ref{rootof1}. Moreover, let $C$ be the subgroup of central grouplike elements of $u_q^{\ge 0}({\mathfrak{g}})$; it consists of elements $\prod_i k_i^{\ell_i}$, such that $\sum_i \ell_id_ia_{ij}$ is divisible by $m$ for all $j$. 
Then we have the following proposition.

\begin{proposition}\label{descend1}
Assume \eqref{eq:order}. The action of ${\mathcal U}^{\ge 0}_q({\mathfrak{g}})$ on $A_{q,Q}$ descends to an action of 
$u_q^{\ge 0}({\mathfrak{g}})/(c-1)_{c \in C}$. Moreover, this action is inner faithful. 
\end{proposition}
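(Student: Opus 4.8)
The plan is to verify the two claims in turn: first that the adjoint action of $\mathcal{U}_q^{\geq 0}(\mathfrak{g})$ on $A_{q,Q}$ factors through the finite dimensional quotient $u_q^{\geq 0}(\mathfrak{g})/(c-1)_{c\in C}$, and second that the resulting action is inner faithful.

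\emph{Descending the action.} By Proposition~\ref{prop:UactonAqQ} we already have an action of $\mathcal{U}_q^{\geq 0}(\mathfrak{g})$ on $A_{q,Q}$, so it suffices to check that every generator of the defining Hopf ideal of $u_q^{\geq 0}(\mathfrak{g})/(c-1)_{c\in C}$ acts by zero. This Hopf ideal is generated by $k_i^m-1$ (which pass to the small quantum group), the nilpotency relations $(e_\alpha^S)^m=0$, and the relations $c-1$ for $c\in C$. As in the proof of Lemma~\ref{descend}, it is enough to check these act by zero on the algebra generators $z_i$ of $A_{q,Q}$, since $A_{q,Q}$ is generated by the $z_i$ and the action is by algebra automorphisms (so a Hopf ideal acting by zero on generators acts by zero everywhere). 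The element $k_i^m$ acts on $z_j$ by the scalar $q^{m d_i a_{ij}}=1$, so $k_i^m-1$ acts by zero; similarly for $c\in C$, the element $c=\prod_i k_i^{\ell_i}$ acts on $z_j$ by $\prod_i q^{\ell_i d_i a_{ij}}=1$ by the very definition of $C$, so $c-1$ acts by zero. For the nilpotency relations: the simple root vectors $e_i$ act on $A_{q,Q}$ as $q$-commutators (from the formula $e_i\cdot z_j = -q^{d_ia_{ij}}z_jz_i + z_iz_j$), hence raise the $\mathbb{Z}^r_{\geq 0}$-grading by the simple root $\alpha_i$; by induction along the braid-group action the higher root vectors $e_\alpha^S$ raise the grading by $\alpha$ (or act by $0$), so $(e_\alpha^S)^m$ raises it by $m\alpha$. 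Since $A_{q,Q}$ is generated in degree one and $e_\alpha^S$ acts by (iterated) $q$-commutators, applying $(e_\alpha^S)^m$ to a single generator $z_j$ — which has degree $\alpha_j$ — forces the use of $m\geq 2$ copies of $e_i$'s summing to $m\alpha$ acting on a degree-one element; a short degree/length bookkeeping argument shows this is necessarily zero, exactly as in the footnote-style argument used for $A_q$ in the proof of Lemma~\ref{descend}. I would phrase this uniformly: $(e_\alpha^S)^m$ is, up to lower-order terms, an iterated $q$-commutator expression of degree $m\alpha$, and evaluated on $z_j$ it vanishes because one cannot write $m\alpha$ as $\alpha_j$ plus a sum of simple roots acting as commutators with only one ``free slot''.

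\emph{Inner faithfulness.} For this I would use the same mechanism as in Lemma~\ref{descend} and the cited \cite[Corollary~5.4.7]{Montgomery}: any nonzero Hopf ideal of a pointed Hopf algebra generated in degree one (which $u_q^{\geq 0}(\mathfrak{g})/(c-1)$ is, being of finite Cartan type and a quotient by central grouplikes) meets the span of the nontrivial skew primitive elements nontrivially, and modulo grouplikes and scaling these are exactly the $e_i$. So it suffices to show that $e_1,\dots,e_r$ act by linearly independent operators on $A_{q,Q}$ that are not in the span of the grouplike actions together with each other in any nontrivial linear combination; concretely, that no nonzero element of $\mathrm{span}(e_i)$ (after passing to the quotient) acts by zero, and likewise that the grouplike part acts faithfully. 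The grouplikes: the group generated by the $k_i$ modulo $C$ acts on the Laurent-monomial degrees of $A_{q,Q}$ via the characters $z_j\mapsto \prod q^{\ell_i d_i a_{ij}} z_j$, and the kernel of this action is precisely $C$ by construction, so the grouplike group acts faithfully. The skew primitives: $e_i$ sends $z_i\mapsto 0$ but $z_j\mapsto$ (nonzero $q$-commutator) for $j$ adjacent to $i$; picking for each $i$ a neighbor $j(i)$ in the connected Dynkin diagram, the operators $e_i$ have distinct ``support'' and a linear combination $\sum c_i e_i$ acting by zero would have to kill each $z_{j(i)}$, forcing all $c_i=0$ since $e_i\cdot z_{j(i)}\neq 0$ while $e_{i'}\cdot z_{j(i)}=0$ for $i'$ not adjacent to $j(i)$ — here one must be slightly careful when two nodes $i,i'$ share a neighbor, but then the two $q$-commutators $e_i\cdot z_{j}$ and $e_{i'}\cdot z_j$ are linearly independent as elements of $A_{q,Q}$ (they are $z$-monomials in different variables), so independence still holds. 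Combining the grouplike faithfulness with the skew-primitive independence and \cite[Corollary~5.4.7]{Montgomery} yields inner faithfulness.

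\emph{Main obstacle.} The routine part is the grouplike bookkeeping; the delicate part is the vanishing of $(e_\alpha^S)^m$ on the generators $z_j$ for the non-simple roots $\alpha$, since the root vectors $e_\alpha^S$ are defined via the braid group action and their explicit $q$-commutator form is combinatorially involved. I expect the cleanest route is not to compute $e_\alpha^S$ explicitly but to argue structurally: the adjoint action of $\mathcal{U}_q^+(\mathfrak{g})$ on itself preserves the root-space grading, so $e_\alpha^S$ acts on $A_{q,Q}$ as an operator of degree $\alpha$ (sending the degree-$\beta$ component into degree $\beta+\alpha$), and since $z_j$ lives in degree $\alpha_j$ while $A_{q,Q}$ has no elements of degree $\alpha_j+m\alpha$ that are ``reachable'' from a degree-one element by $m$-fold application of a degree-$\alpha$ derivation-like operator when $m\geq 2$ — more precisely, $(e_\alpha^S)\cdot z_j$ already lies in degree $\alpha_j+\alpha$, and iterating, $(e_\alpha^S)^2\cdot z_j$ would require a nonzero degree-$(\alpha_j+2\alpha)$ expression obtained by letting the degree-$\alpha$ operator act on a monomial which is a $q$-commutator of $z_i$'s; tracking that this is forced to vanish is exactly the kind of length/PBW argument the authors already invoke for $A_q$, so I would cite and mirror that reasoning rather than redo it.
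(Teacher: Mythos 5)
Your reduction to checking the generators of the Hopf ideal on the $z_j$'s is fine (the set of elements killed by a Hopf ideal is a subalgebra containing $1$), and your treatment of $k_i^m-1$, of $c-1$ for $c\in C$, and of inner faithfulness is essentially the paper's. The genuine gap is in the vanishing of $(e_\alpha^S)^m$ on $A_{q,Q}$. The ``degree/length bookkeeping'' you want to import from Lemma~\ref{descend} does not transfer: in that lemma $e_i$ acts on $A_q$ by $e_i\cdot z_{i+1}=z_i$, $e_i\cdot z_j=0$, i.e.\ by operators that send single variables to single variables, so a word with a repeated index annihilates every generator. Here, by contrast, the adjoint action \emph{raises} the $z$-degree: already $e_i\cdot z_i=(1-q^{2d_i})z_i^2\neq 0$, and iterating, $e_i^m\cdot z_i$ is a scalar multiple of $z_i^{m+1}$, which is a \emph{nonzero} element of the quantum polynomial ring. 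So the target graded component $\alpha_j+m\alpha$ is perfectly well populated, your assertion that ``one cannot write $m\alpha$ as $\alpha_j$ plus a sum of simple roots'' is false, and the vanishing of $(e_\alpha^S)^m\cdot z_j$ is a scalar identity (a $q$-factorial vanishing at the root of unity), not a grading phenomenon. For non-simple $\alpha$ the root vectors are braid-group conjugates and a direct scalar computation is essentially intractable, which is exactly why a structural argument is needed.

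The paper supplies that structural argument, and it is the idea your proposal is missing: the subalgebra $Z_0^{\geq 0}$ generated by the $k_i^{\pm m}$ and the $(e_\alpha^S)^m$ is a Hopf subalgebra of $\mathcal{U}_q^{\geq 0}(\mf{g})$ consisting of \emph{central} elements (this is \cite[Corollary~3.1]{deConciniKac} and \cite[Proposition~5.6(d)]{DKP}). Since the action on $A_{q,Q}$ is the adjoint action, any $h\in Z_0^{\geq 0}$ satisfies $h\cdot a=\sum h_1aS(h_2)=\sum ah_1S(h_2)=\varepsilon(h)a$ (using that $\Delta(h)\in Z_0^{\geq 0}\otimes Z_0^{\geq 0}$, so the components $h_1$ are central), and hence the augmentation ideal of $Z_0^{\geq 0}$ acts by zero. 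This one-line centrality argument replaces all the root-vector combinatorics. I recommend you replace the degree argument by this; without it the descent claim is not established.
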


\begin{proof}
Let $J$ be the kernel of the projection ${\mathcal{U}}_q^{\ge 0}({\mathfrak{g}})\to u_q^{\ge 0}({\mathfrak{g}})$, which is a Hopf ideal. 
We need to show that $J$ acts by zero in $A_{q,Q}$, i.e. that  $k_i^m-1$ and $(e_\alpha^S)^m$ act by zero. 
Let $Z_0^{\geq 0}\subset \mathcal{U}_q^{\geq 0}(\mathfrak g)$  be the subalgebra generated by these elements. 
 By \cite[Proposition~5.6(d)]{DKP} and \cite[Corollary~3.1]{deConciniKac}, we have that 
$Z_0^{\geq 0}$ is a Hopf subalgebra of $\mathcal{U}_q^{\geq 0}(\mathfrak g)$ generated by central elements. Hence, if $h\in Z_0^{\geq 0}$, then 
$h\cdot a= \sum h_1aS(h_2)= \sum ah_1S(h_2)=\varepsilon(h)a$, as desired. 
The inner faithfulness is clear, as the kernel for the action of the grouplike elements is exactly $C$, and 
all skew primitive elements $e_i$ of $u_q^{\geq 0}(\mathfrak{g})$ act nontrivially by the definition of the $\mathcal{U}_q^{\geq 0}(\mathfrak{g})$-action on $A_{q,Q}$ from Proposition~\ref{prop:UactonAqQ}.
\end{proof}

Recall the discussion in Section~\ref{subsec:twist}. Let $G= (\mathbb{Z}/m\mathbb{Z})^{ r}=G(u_q^{\geq 0}({\mathfrak{g}}))$, and let $\alpha_i \in \widehat{G}$ be the simple root characters 
defined by $\alpha_i(k_j)=q^{d_ia_{ij}}$. Assume that 
\begin{equation} \label{eq:detaij}
\text{$m$=ord($q$) of (\ref{eq:order}) is relatively prime to  det$(a_{ij})$, and to $3$ in type $G_2$.}
\end{equation}

In this case, $C=\{1\}$, and $\alpha_i$ are independent generators of $\widehat{G}$.
Thus, there is a unique, up to gauge transformations, Drinfeld twist $J_Q$ of $\kk G$ such that 
\begin{equation} \label{bJ}
b_{J_Q}(\alpha_i,\alpha_j) = 
\begin{cases}
q^{d_ia_{ij}} &\text{ for }i\rightarrow j \text{ in $Q$},\\ q^{-d_ia_{ij}} &\text{ for }i\leftarrow j \text{ in $Q$},\\ 1 &\text{ for } i\text{ not connected to } j \text{ in $Q$}.
\end{cases}
\end{equation}
To see this, recall Proposition~\ref{prop:JtobJ}.
For example, one may take the twist $J_Q$ defined by 
\begin{equation}\label{sigmaj}
\sigma_{J_Q}(\alpha_j,\alpha_i) = 
\begin{cases}
q^{d_ia_{ij}} &\text{ for }i\rightarrow j \text{ in $Q$},\\ 1 &\text{ for }i\leftarrow j \text{ in $Q$},\\ 1 &\text{ for } i\text{ not connected to } j \text{ in $Q$}.
\end{cases}
\end{equation}
So we have $2^{r-1}$ such twists, up to gauge transformations.
Namely, they are parametrized by orientations of the Dynkin diagram, 
which has $r-1$ edges, where $r=\text{rank}(\mathfrak{g})$. Then, we have the following result.

\begin{proposition} \label{prop:uqgpostwist} 
Assume \eqref{eq:detaij}. Then, the twists $u_q^{\geq 0}(\mf{g})^{J}$  are Galois-theoretical for each $J=J_Q$ as in \eqref{bJ}.
\end{proposition}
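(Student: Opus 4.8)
The plan is to mimic the proof of Proposition~\ref{prop:uqglntwist}, transferring the inner faithful action from the quantum polynomial algebra to its twist and then checking that the twist is a commutative domain. First I would start from Proposition~\ref{descend1}, which gives an inner faithful action of $u_q^{\ge 0}(\mf{g})$ on $A_{q,Q}$ (using that $C=\{1\}$ under hypothesis \eqref{eq:detaij}). By the general twisting formalism of Section~\ref{subsec:twist}, $(A_{q,Q})_{J_Q}$ is then an inner faithful left $u_q^{\ge 0}(\mf{g})^{J_Q}$-module algebra. By Lemma~\ref{lem:Skryabin}, it therefore suffices to verify that $(A_{q,Q})_{J_Q}$ is a commutative domain, at which point inner faithfulness passes to its field of fractions and the Galois-theoretical property follows.

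The key computation is to identify $(A_{q,Q})_{J_Q}$ with an ordinary polynomial ring. The $z_i$ are common eigenvectors for the Cartan group $G$, with $g\cdot z_i=\chi_i(g)z_i$ where $\chi_i=\alpha_i$ is the simple root character (this is exactly the content of the $k_i$-action in Proposition~\ref{prop:UactonAqQ}, since $k_i\cdot z_j=q^{d_ia_{ij}}z_j=\alpha_j(k_i)z_j$). Hence Theorem~\ref{thm:GKM} applies: the twisted algebra $(A_{q,Q})_{J_Q^{-1}}$, or rather the relation among the $z_i$ in any such twist, is governed by the bicharacter. Concretely, by Theorem~\ref{thm:GKM} the commutation relation in $\kk[z_1,\dots,z_r]_{J_Q}$ (viewing $\kk[z_1,\dots,z_r]$ as the $G$-algebra with these eigenvectors) is $z_i\ast_{J_Q}z_j=b_{J_Q}(\chi_j,\chi_i)z_j\ast_{J_Q}z_i$, and by the defining formula \eqref{bJ} for $b_{J_Q}$ this reproduces exactly the relations $z_iz_j=q^{\pm d_ia_{ij}}z_jz_i$ of $A_{q,Q}$. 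Thus $\kk[z_1,\dots,z_r]_{J_Q}\cong A_{q,Q}$ as algebras (both being iterated skew-polynomial rings on the same generators with the same relations), and consequently, since $(A_J)_{J^{-1}}\cong A$, we get $(A_{q,Q})_{J_Q^{-1}}\cong \kk[z_1,\dots,z_r]$. Replacing $J_Q$ by $J_Q^{-1}$ (which is again of the form \eqref{bJ}, corresponding to the reversed orientation $Q^{\mathrm{op}}$, so no generality is lost as $Q$ ranges over all orientations), we conclude $(A_{q,Q})_{J_Q}$ is a commutative polynomial ring, in particular a commutative domain.

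I expect the main obstacle to be bookkeeping rather than a genuine difficulty: one must be careful that the bicharacter $b_{J_Q}$ built from \eqref{bJ}, together with the precise conventions in Theorem~\ref{thm:GKM} (the order of arguments $b_J(\chi_j,\chi_i)$ and which of $J_Q$, $J_Q^{-1}$ one twists by), really yields the correct sign $\pm d_ia_{ij}$ matching the orientation of $Q$ in the relations of $A_{q,Q}$; the symmetry $d_ia_{ij}=d_ja_{ji}$ is what makes the two sides consistent. One should also note that hypothesis \eqref{eq:detaij} is used twice: once to guarantee $C=\{1\}$ so that Proposition~\ref{descend1} gives a genuinely inner faithful action of $u_q^{\ge 0}(\mf{g})$ itself (not merely of a quotient), and once to ensure the $\alpha_i$ are independent generators of $\widehat{G}$ so that the twist $J_Q$ with prescribed bicharacter exists and is unique up to gauge equivalence (Proposition~\ref{prop:JtobJ}). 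Finally, $A_{q,Q}$ is manifestly an iterated Ore extension, hence a noetherian domain, which is what legitimizes calling $(A_{q,Q})_{J_Q}$'s untwist a polynomial ring and lets Lemma~\ref{lem:Skryabin} close the argument.
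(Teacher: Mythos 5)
Your proof follows the paper's own argument essentially verbatim: Proposition~\ref{descend1} for the inner faithful action on $A_{q,Q}$, the twisting formalism plus Lemma~\ref{lem:Skryabin} to reduce to showing that the twist of $A_{q,Q}$ is a commutative domain, and Theorem~\ref{thm:GKM} to identify that twist with a polynomial ring. The only deviation is the sign convention --- with the paper's conventions one gets $\kk[z_1,\dots,z_r]_{J_Q^{-1}}=A_{q,Q}$, so that $(A_{q,Q})_{J_Q}$ is the polynomial ring directly --- but your observation that $J_Q^{-1}$ is gauge equivalent to $J_{Q^{\mathrm{op}}}$ and that $Q\mapsto Q^{\mathrm{op}}$ permutes the orientations makes your version close correctly as well.
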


\begin{proof}
By Proposition \ref{descend1} and under the assumption of \eqref{eq:detaij}, we have that $A_{q,Q}$ is a left $u_q^{\geq 0}(\mf{g})$-module algebra. Hence,  $(A_{q,Q})_J$ is a left $u_q^{\geq 0}(\mf{g})^J$-module algebra. Now by Lemma~\ref{lem:Skryabin}, it suffices to show that $(A_{q,Q})_J$ is a commutative domain. By Theorem~\ref{thm:GKM}, we get that $\kk[z_1, \dots,z_n]_{J^{-1}} = A_{q,Q}$. Thus, $\kk[z_1, \dots,z_n] = (\kk[z_1, \dots,z_n]_{J^{-1}})_J = (A_{q,Q})_J$.
\end{proof}

\begin{remark}\label{rem:posII}
We will show in Part II of this work that the twists $J_Q$ above are the only ones coming from the Cartan subgroup  of $u_q^{\geq 0}(\mathfrak{g})$ so that $u_q^{\geq 0}(\mathfrak{g})^{J_Q}$ is Galois-theoretical. In particular, 
the Hopf algebra $u_q^{\ge 0}({\mathfrak{g}})$ is not Galois-theoretical, unless ${\mathfrak{g}}={\mathfrak{sl}}_2$. 
We will also see in Part II that the full small quantum group $u_q({\mathfrak{g}})$ does not 
become Galois-theoretical under twists coming from the Cartan subgroup, unless ${\mathfrak{g}}={\mathfrak{sl}}_n$. 
\end{remark}

If we are not in the setting of \eqref{eq:detaij}, that is to say, if $m$ is not relatively prime to the determinant of the Cartan matrix (or to $3$ for type $G_2$), then the twists $J_Q$ as above do not exist in general. 
Indeed, consider the case of type $A_{n-1}$. Then $\det(a_{ij})=n$ and $d_i=1$. Let $\omega_i\in \widehat{G}$ be such that $\omega_i(k_j)=q^{\delta_{ij}}$,
so that $\alpha_i=\prod_k\omega_k^{a_{ik}}$. Let $b_J(\omega_k,\alpha_j)=c_{kj}$. 
Then from \eqref{bJ}, we get that:
\[
\begin{array}{c}
\displaystyle \prod_k c_{kj}^{a_{ik}}=q^{a_{ij}},\text{ if } i\rightarrow j, \quad \quad
\displaystyle \prod_k c_{kj}^{a_{ik}}=q^{-a_{ij}},\text{ if } i \leftarrow j,\\
\displaystyle \prod_k c_{kj}^{a_{ik}}=1,\text{ if } i\text{ not connected to } j.
\end{array}
\]
Recall that $c_{kj}$ are $m$-th roots of $1$; so let $c_{ij}=q^{b_{ij}}$, $b_{ij}\in \Bbb Z/m\Bbb Z$. 
 Then, we get that:
 \[
\begin{array}{c}
\displaystyle \sum_k a_{ik}b_{kj}=a_{ij}, \text{ if } i\rightarrow j,  \quad \quad
\displaystyle \sum_k a_{ik}b_{kj}=-a_{ij}, \text{ if } i\leftarrow j, \\
\displaystyle \sum_k a_{ik}b_{kj}=0,
\text{ if } i\text{ not connected to } j
\end{array}
\]
in $\Bbb Z/m\Bbb Z$.  
 
Assume that $gcd(m,n)=d$. The equations above yield 
\begin{equation} \label{eq:1}
\sum_{k=1}^{n-1} a_{ik}b_{kj} = s_{ij} a_{ij} \mod m,
\end{equation}
 where $s_{ij}$ equals 1 if $i \rightarrow j$, equals $-1$ if $i \leftarrow j$, and equals $0$ if $i$ is not connected to $j$. 
We also have that $\sum_{i=1}^{n-1} ia_{ik}=0 \mod n$. Hence, 
$
\sum_{i=1}^{n-1} \frac{im}{d} a_{ik} =0 \mod m.$
 (Indeed, if $\ell$ is divisible by $n$, then $m\ell/d$ is divisible by $mn/d$, and hence by $m$.) Therefore, $\sum_{k=1}^{n-1} \sum_{i=1}^{n-1} \frac{im}{d} a_{ik} b_{kj}=0 \mod m$. 
Now by \eqref{eq:1}, we get that
$$
\sum_i \frac{im}{d}s_{ij}a_{ij}=0 \mod m.
$$
In particular, taking $j=1$, we get that $2m/d=0\mod m$. So, $d$ divides $2$. Hence, we must have $d=1$, since $d$ 
divides $m$ which is odd by \eqref{eq:order}. Therefore, such a twist $J_Q$ does not exist.

\smallskip

However, this issue can be remedied by considering the following Hopf algebra.

\begin{definition} The {\it small quantum group of adjoint type}, denoted by $\widetilde{u}_q({\mf{g}})$, is generated by $u_q({\mathfrak{g}})/(c-1)_{c \in C}$ and commuting grouplike elements 
$g_i$, subject to relations 
\begin{equation*}
g_ie_jg_i^{-1}=q^{\delta_{ij}} e_j,\quad g_if_jg_i^{-1}=q^{-\delta_{ij}} f_j, \quad g_i^m=1, \quad k_i=\prod_j g_j^{d_ia_{ij}}.
\end{equation*}
It has dimension $m^{\dim({\mf{g}})}$, and is related to the adjoint group of ${\mf{g}}$.
\end{definition}

Now $\widetilde{u}_q({\mf{g}})$ has a Hopf subalgebra $\widetilde{u}_q^{\ge 0}({\mf{g}})$, 
which acts inner faithfully on $A_{q,Q}$ (namely, the action is extended via $g_i\cdot z_j=q^{\delta_{ij}}z_j$). 
Let $G'$ be the group of grouplike elements of $\widetilde{u}_q^{\ge 0}({\mf{g}})$, and
let $\alpha_i$ be the generators of $\widehat G'$ defined by $\alpha_i(g_j)=q^{\delta_{ij}}$. 
Hence, the equation (\ref{sigmaj}) for $\sigma_{J_Q}$ has a unique solution and 
we obtain the result below.

\begin{proposition} \label{prop:uqgpostwist1} 
The twists $\widetilde{u}_q^{\geq 0}(\mf{g})^{J_Q}$  are Galois-theoretical. \qed
\end{proposition}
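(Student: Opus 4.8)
The plan is to run the same argument as in the proof of Proposition~\ref{prop:uqgpostwist}, now using the quantum polynomial algebra $A_{q,Q}$ as a module algebra over $\widetilde{u}_q^{\geq 0}(\mf{g})$ rather than over $u_q^{\geq 0}(\mf{g})/(c-1)_{c\in C}$. First I would recall, from the discussion just preceding the statement, that $\widetilde{u}_q^{\geq 0}(\mf{g})$ acts on $A_{q,Q}$ by extending the action of $u_q^{\geq 0}(\mf{g})/(c-1)_{c\in C}$ of Proposition~\ref{descend1} via $g_i\cdot z_j=q^{\delta_{ij}}z_j$, and that this action is inner faithful: the grouplike subgroup $G'=\langle g_1,\dots,g_r\rangle$ acts faithfully on $\mathrm{span}(z_1,\dots,z_r)$, and each $e_i$ acts nontrivially, so by \cite[Corollary~5.4.7]{Montgomery} no nonzero Hopf ideal annihilates $A_{q,Q}$.

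Next, by the twist formalism of Section~\ref{subsec:twist}, the twisted algebra $(A_{q,Q})_{J_Q}$ is an inner faithful left $\widetilde{u}_q^{\geq 0}(\mf{g})^{J_Q}$-module algebra. By Lemma~\ref{lem:Skryabin} it then suffices to prove that $(A_{q,Q})_{J_Q}$ is a commutative domain, since we may then pass to its field of fractions. To establish commutativity I would apply Theorem~\ref{thm:GKM}: the $G'$-characters on the generators $z_i$ are exactly the independent generators $\alpha_i$ of $\widehat{G'}$, and a direct check --- using the definition (\ref{bJ}) of $b_{J_Q}$ and the fact that $b_{J_Q}$ is an alternating bicharacter --- shows that the defining relations $z_iz_j=q^{\pm d_ia_{ij}}z_jz_i$ of $A_{q,Q}$ are precisely those of $\kk[z_1,\dots,z_r]$ twisted by $J_Q^{-1}$. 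Hence $\kk[z_1,\dots,z_r]_{J_Q^{-1}}\cong A_{q,Q}$, and twisting both sides back by $J_Q$ gives $(A_{q,Q})_{J_Q}\cong\bigl(\kk[z_1,\dots,z_r]_{J_Q^{-1}}\bigr)_{J_Q}=\kk[z_1,\dots,z_r]$, a commutative domain.

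The one place that genuinely differs from the coprime case --- and the only step I would flag as a possible snag --- is the existence of the twist $J_Q$ of $\kk G'$ with the prescribed bicharacter. Here it is essential that the simple-root characters $\alpha_i$ are \emph{independent} generators of $\widehat{G'}$ for the adjoint-type group $G'$, in contrast with the situation for $u_q^{\geq 0}(\mf{g})$ when $\gcd(m,\det(a_{ij}))>1$, where the relations among the $\alpha_i$ obstructed solving (\ref{sigmaj}). With the $\alpha_i$ independent, any alternating assignment of values $b_{J_Q}(\alpha_i,\alpha_j)$ extends uniquely (up to gauge) to an alternating bicharacter on $\widehat{G'}$, hence by Proposition~\ref{prop:JtobJ} corresponds to a gauge-equivalence class of Drinfeld twists; one concrete representative is given by the formula (\ref{sigmaj}). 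Beyond checking this existence, the proof is a routine transcription of the argument for Proposition~\ref{prop:uqgpostwist}, so I do not expect any further difficulty.
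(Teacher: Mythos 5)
Your proposal is correct and follows essentially the same route the paper intends (the paper marks this proposition as immediate from the preceding discussion plus the argument of Proposition~\ref{prop:uqgpostwist}): twist $A_{q,Q}$ by $J_Q$, identify $(A_{q,Q})_{J_Q}$ with $\kk[z_1,\dots,z_r]$ via Theorem~\ref{thm:GKM}, and apply Lemma~\ref{lem:Skryabin}. You also correctly isolate the only genuinely new point, namely that the independence of the $\alpha_i$ in $\widehat{G'}$ for the adjoint-type group guarantees the existence of $J_Q$, which is exactly what the paper's preceding paragraph establishes.
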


This provides $2^{\text{rank}({\mathfrak g})-1}$ Galois-theoretical Hopf algebras, without assuming the condition \eqref{eq:detaij} on $m$. 

%Note that for ${\mf{g}}$ of type $A_{n-1}$, this agrees with the discussion in the previous subsection, upon restriction to the ``nonnegative" subalgebra
%generated by the grouplike elements and positive root vectors. 

\begin{remark} \label{rem:Milen} We thank Milen Yakimov for the following remark that, in fact, there is a different way to construct the $u_q^{\ge 0}({\mf{g}})$-module algebras $A_{q,Q}$. Namely, $A_{q,Q}$ arises as a coideal subalgebra of ${\mathcal{U}}_q^{\ge 0}({\mf{g}})$, and since $u_q^{\ge 0}({\mf{g}})$ is self-dual, $A_{q,Q}$ also arises as a $u_q^{\ge 0}({\mf{g}})$-module algebra. We see this as follows.

There are general classification results for coideal
subalgebras in ${\mathcal{U}}_q^{\ge 0}({\mf{g}})$ by Heckenberger-Schneider \cite{HS} and
by Heckenberger-Kolb \cite{HK}. The results are that under certain
natural conditions all (one-sided) coideal subalgebras are 
tensor products of the Cartan part of ${\mathcal{U}}_q^{\ge 0}({\mf{g}})$ with ${\mathcal{U}}^+[w]$ for $w \in W$ (the Weyl group).
The second factor is a $q$-analog of
$\mathcal{U}({\mathfrak{n}}_+ \cap w({\mathfrak{n}}_-)).$

All ${\mathcal{U}}^+[w]$ are iterated Ore extensions, which 
are $q$-polynomial rings if and only if $w$ has no repeating simple
reflections in one (hence, in every) reduced decomposition, that is to say, if and only if the  $w$ is a subexpression of a Coxeter element.
 Also, it is not hard to show that at roots of unity, the coaction
 of the quantum group on its coideal subalgebra descends to the small quantum group, and 
is inner faithful if and only if $w$ is a Coxeter element.

Therefore, the $\mathcal{U}^+[w]$ that (1) admit an inner faithful action of $u_q^{\geq 0}(\mf{g})$ and (2) are isomorphic to a $q$-polynomial algebras, are exactly 
those coming from the Coxeter elements of $W$. 

To relate this construction to our construction of an inner faithful $u_q^{\geq 0}(\mathfrak{g})$-module algebra, we need to define 
a bijection between orientations of the Dynkin diagram and Coxeter elements in $W$. 
Namely, an orientation of the Dynkin diagram defines a partial order on vertices,
and we can extend it to a total order and write the corresponding word $s_{i_1} \dots s_{i_r}$, which is a Coxeter element of $W$.
Then, one can show that any two such total orderings give the same
element of $W$. Conversely, given a Coxeter element, we can say that $i\rightarrow j$
if $s_i$ appears before $s_j$ in the word, and this defines an orientation on the Dynkin diagram. 
See \cite[Exercise~3.2]{GeckPfeiffer}.
\end{remark}

%%%%%%%%%%%%%%%%%%%%%%%%%%%%%%%%%%%%%

\subsection{Non-pointed Galois-theoretical Hopf algebras} \label{subsec:nonptedGT} Consider the following example from \cite{Etingof:PIintegrality}.

\begin{example} \label{ex:nonptedGT} \cite[Example~3.16]{Etingof:PIintegrality}
Let $n,m$ be positive integers and let $q$ be a primitive $m$-th root of unity. Consider the generalized Taft algebra $K = T(nm, m, 1)$ (from Section~\ref{subsec:genTaft}) generated by a grouplike element $g$ and a $(1,g)$-skew primitive element $x$, subject to relations $g^{nm}=1$, $x^m = g^m- 1$, and $gx=qxg$. We get that $K$ coacts inner-faithfully on $\kk(z)$ by the formula $\rho(z) = z \otimes g + 1 \otimes x$. We also have that $K$ is not basic. Thus, $H = K^*$ is a non-pointed Galois-theoretical Hopf algebra by Lemma~\ref{lem:Skryabin}.
\end{example}

%%%%%%%%%%%%%%%%%%%%%%%%%%%%%%%%%%%%%

\subsection{On duals and twistings of Galois-theoretical Hopf algebras}  \label{subsec:GTtwist}
We now discuss the preservation of the Galois-theoretical property under taking Hopf duals and twists.
The results about twists (parts (b) and (c) below) were observed by Cesar Galindo; we thank him for allowing us to use this result.

\begin{proposition} \label{prop:GTtwist}
The Galois-theoretical property is preserved neither under (a) Hopf dual, (b) 2-cocycle deformation (that alters multiplication), nor (c) Drinfeld twist (that alters comultiplication).
\end{proposition}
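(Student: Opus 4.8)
The plan is to exhibit explicit counterexamples for each of the three operations, all built from Hopf algebras already shown to be Galois-theoretical (or not) earlier in the paper.

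For (a), I would take $K=T(nm,m,1)$ with $n\geq 2$, the generalized Taft algebra of Example~\ref{ex:nonptedGT}. Its dual $H=K^*$ is Galois-theoretical (non-pointed) by that example. But $K=H^*$ itself is \emph{not} Galois-theoretical: $K$ is a generalized Taft algebra with $m\neq nm$, so Proposition~\ref{prop:genTaft} applies to say it is not Galois-theoretical. Hence the pair $(H,H^*)$ shows the property is not preserved under Hopf dual. (Alternatively one could phrase this starting from $K$ being non-Galois-theoretical and $K^*$ being Galois-theoretical — same pair, either direction works.)

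For (c), the natural candidate is $u_q(\mf{sl}_2)$ versus $\mathrm{gr}(u_q(\mf{sl}_2))$. By Proposition~\ref{prop:uqsl2}, the former is Galois-theoretical and the latter is not. The point to verify is that $\mathrm{gr}(u_q(\mf{sl}_2))=H_{\lambda=0}$ is obtained from $u_q(\mf{sl}_2)=H_{\lambda=1}$ by a Drinfeld twist (which alters comultiplication but not multiplication as an algebra). Actually the cleaner route, and the one I expect Galindo's observation uses, goes the other way: $\mathrm{gr}(u_q(\mf{sl}_2))$ and $u_q(\mf{sl}_2)$ have the same underlying coalgebra structure up to twist, so one should instead realize the \emph{non}-Galois-theoretical one as a twist of a Galois-theoretical one, or conversely. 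I would identify a Drinfeld twist $J$ (necessarily supported on the grouplike part $\kk\langle k\rangle$, so of the form in Proposition~\ref{prop:JtobJ} but for the cyclic group — note however that $\mathbb{Z}/m$ has no nontrivial alternating bicharacter, so the twist must come from a larger grouplike group). This suggests instead using $E(n)$ or a Hopf algebra with a bigger abelian $G$: twist by $J$ altering $\Delta$ and check directly on module fields that Galois-theoreticality is destroyed, using Lemma~\ref{lem:Zn} and the classification results. For (b), the 2-cocycle deformation $H_\sigma$ (altering multiplication) is dual to a Drinfeld twist on $H^*$, so once (a) and (c) are in hand, (b) follows by dualizing the example for (c), or one gives a parallel explicit 2-cocycle deformation of one of the small quantum groups.

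The main obstacle I anticipate is part (c) (and hence (b) by duality): finding a twist that is genuinely supported on the comultiplication and provably destroys the property requires either a slick conceptual argument (e.g.\ a twist of a group algebra is still a quasi-Hopf-trivial object, but a twist of a Galois-theoretical non-group-algebra may fail the ``acts on a field'' condition because module fields over $H^J$ correspond to twisted-commutative $H$-module algebras, which need not be commutative domains) or an explicit computation with $A_J$ à la Theorem~\ref{thm:GKM} showing $A_J$ is not a commutative domain and moreover \emph{no} module field exists. I would lean on the machinery of Section~\ref{subsec:twist}: an inner faithful $H$-module algebra $A$ twists to an inner faithful $H^J$-module algebra $A_J$, and conversely; so $H^J$ is Galois-theoretical iff $H$ has an inner faithful module algebra $A$ with $A_J$ a commutative domain. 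Exhibiting $H$ and $J$ for which every such $A$ fails this is the crux — I would try $H=T(n)\otimes T(n)$ or $E(2)$ with the nontrivial alternating bicharacter on $\widehat{G}=(\mathbb{Z}/2)^2$, where Theorem~\ref{thm:GKM} makes $A_J$ visibly noncommutative (a quantum plane) and then argue no other choice of $A$ repairs this.
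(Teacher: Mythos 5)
Your part (a) is correct and is exactly the paper's argument: $K=T(nm,m,1)$ with $n\ge 2$ is not Galois-theoretical by Proposition~\ref{prop:genTaft}, while $K^*$ is by Example~\ref{ex:nonptedGT}. (The paper also notes the even simpler counterexample $\kk G$ for $G$ non-abelian, whose dual is semisimple, noncocommutative, hence not a group algebra and so not Galois-theoretical by Proposition~\ref{prop:GTprelim}(b).)

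Parts (b) and (c) have genuine gaps. For (c) the paper has a two-line conceptual argument that you circle around but never land on: take any Galois-theoretical group algebra $\kk G$ and a Drinfeld twist $J$ with $(\kk G)^J$ noncocommutative (so $G$ must be non-abelian, consistent with your observation that no nontrivial alternating bicharacter lives on a cyclic $\widehat G$). Then $(\kk G)^J$ is still semisimple, so if it were Galois-theoretical it would be a group algebra by Proposition~\ref{prop:GTprelim}(b), contradicting noncocommutativity. Your alternative plan --- twist $T(n)\otimes T(n)$ and show \emph{no} module field exists --- is exactly the hard step you yourself flag as ``the crux,'' and it is not automatic: Section~\ref{subsec:uqslntwist} shows that twisting can \emph{create} Galois-theoreticality, so ruling out every inner faithful $A$ with $A_J$ a field requires real work. (Also, $E(2)$ won't do: its grouplike group is $\Z/2$, not $(\Z/2)^2$, so it admits no nontrivial twist of the relevant form.) For (b), your reduction ``dualize the example for (c)'' is circular: dualizing converts the twist into a cocycle deformation of the \emph{duals}, but by part (a) the Galois-theoretical status of the duals is unrelated to that of the original pair, so you learn nothing. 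The paper instead uses the pair you actually wrote down while discussing (c): $u_q(\mf{sl}_2)$ is Galois-theoretical and $\mathrm{gr}(u_q(\mf{sl}_2))$ is not (Proposition~\ref{prop:uqsl2}), and by Masuoka's theorem $\mathrm{gr}(u_q(\mf{sl}_2))$ is a 2-cocycle deformation of $u_q(\mf{sl}_2)$ --- these two have the same coalgebra and different multiplications, so this is a cocycle deformation, not a Drinfeld twist as you first wrote. Correcting that misidentification gives (b) immediately; the missing ingredient is the citation to Masuoka.
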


\begin{proof}
(a) Consider Example~\ref{ex:nonptedGT}: the Hopf dual of a generalized Taft algebra $T(nm,m,1)$ is Galois-theoretical. However, $T(nm,m,1)$ is not Galois-theoretical by Proposition~\ref{prop:genTaft}. More simply, one could also use a group algebra of a finite non-abelian group as a counterexample by Proposition~\ref{prop:GTprelim}(a,b).

(b) Consider Proposition~\ref{prop:uqsl2}: $u_q(\mf{sl}_2)$ is Galois-theoretical, yet its associated graded Hopf algebra gr($u_q(\mf{sl}_2)$) is not. Moreover, gr($u_q(\mf{sl}_2)$) is a 2-cocycle deformation of $u_q(\mf{sl}_2)$ by \cite[Theorem~7.8]{Masuoka}.

(c) Consider a Galois-theoretical group algebra $\kk G$ and take a nontrivial Drinfeld twist $J$ of $\kk G$ so that $(\kk G)^J$ is noncocommutative. Note that $(\kk G)^J$ is a semisimple Hopf algebra. So if $(\kk G)^J$ is Galois-theoretical, then by Proposition~\ref{prop:GTprelim}(b), $(\kk G)^J$ is a group algebra, which yields a contradiction.
\end{proof}

\section{Appendix}
This article has appeared in  
{\it Transformation Groups}, 20(4):985-1013, 2015. Here are the TG reference numbers versus the arXiv reference numbers: 
\medskip

\noindent Definitions 1-12 = 2.4, 2.6, 2.8, 2.9, 2.11, 2.12, 2.13, 2.14, 3.3, 4.12, 4.19, 4.31; 
\medskip

\noindent Theorems (T), Propositions (P), Lemmas (L), Corollaries (C), Conjectures (Cj): P1=1.1, T2=1.2, T3=1.3, L4=2.1, Cj5=2.2, T6=2.3, P7=2.15, T8=2.18, L9=3.1, P10=3.4, T11=3.6, L12=3.7, C13=3.8, T14=3.9, T15=3.10, L16=4.1, P17-27=4.2-4.11, 4.13, L28=4.14, P29=4.15, P30=4.16, C31=4.17, P32-39=4.20-4.24, 4.28, 4.32, 4.35;
\medskip

\noindent  Remarks 1-6=1.4, 2.10, 3.2, 4.18, 4.29, 4.33 ; Example 1= 4.34 ; Equations 1-6=2.5, 2.7, 4.25, 4.26, 4.27, 4.30; Questions 1=3.5.

\section*{Acknowledgments}
We thank the anonymous referees for making several suggestions that improved the quality of this manuscript, which include a shorter proof of Proposition~\ref{prop:UactonAqQ}. We are grateful to Susan Montgomery  for pointing out references \cite{MontgomerySchneider} and \cite{MontgomerySmith}. We thank Nicol\'{a}s Andruskiewitsch and Iv\'{a}n Angiono for
useful discussions and insightful suggestions. We also thank Cesar Galindo for his
contribution to Section~\ref{subsec:GTtwist}, and thank Milen Yakimov for supplying Remark~\ref{rem:Milen}. 
The authors were supported by the National Science Foundation: NSF-grants DMS-1000173, DMS-1102548, and DMS-1401207.

\bibliography{EW_NonssComDomain}

\end{document}